\newcommand{\N}{\mathbb{N}}
\newcommand{\intO}{{\int_{\Omega}}}
\newcommand{\dx}{{\, dx}}
\newcommand{\ds}{{\, ds}}
\newcommand{\sig}{\sigma}
\newcommand{\Sig}{\Sigma}
\newcommand{\signa}{{\rm sig}}
\newcommand{\gam}{\gamma}
\newcommand{\Gam}{\Gamma}
\newcommand{\Gamz}{\Gamma_0}
\newcommand{\DZ}{\D_6 \times \Z_2}
\newcommand{\lam}{\lambda}
\newcommand{\al}{\alpha}
\newcommand{\Om}{\Omega}
\newcommand{\R}{\mathbb{R}}
\newcommand{\C}{\mathbb{C}}
\newcommand{\Z}{\mathbb{Z}}
\newcommand{\D}{\mathbb{D}}
\DeclareMathOperator{\fix}{Fix}
\DeclareMathOperator{\stab}{Stab}
\DeclareMathOperator{\pstab}{pStab}
\renewcommand{\subsection}[1]{\bigskip\noindent{\bf #1.}}
\theoremstyle{plain} 
\newtheorem{thm}{Theorem}[section]
\newtheorem{prop}[thm]{Proposition}
\theoremstyle{definition}
\newtheorem{definition}[thm]{Definition}
\theoremstyle{remark}
\newtheorem{example}[thm]{Example}
\newtheorem{algorithm}[thm]{Algorithm}
\begin{document}

\thanks{Partially supported by NSF Grant DMS-0074326}
\thanks{\today}

\title[Symmetry and Automated Branch Following] {Symmetry and Automated Branch Following for
\\
a Semilinear Elliptic PDE on a Fractal Region}

\author{John M. Neuberger}
\author{N\'andor Sieben}
\author{James W. Swift}

\email{
John.Neuberger@nau.edu,
Nandor.Sieben@nau.edu,
Jim.Swift@nau.edu}

\address{
Department of Mathematics and Statistics,
Northern Arizona University PO Box 5717,
Flagstaff, AZ 86011-5717, USA
}

\subjclass[2000]{20C35, 35P10, 65N25}
\keywords{Snowflake, Symmetry, Bifurcation, Semilinear Elliptic PDE, GNGA}

\begin{abstract} We apply the Gradient-Newton-Galerkin-Algorithm (GNGA) of
Neuberger \& Swift to find solutions to a semilinear elliptic Dirichlet
problem on the region whose boundary is the Koch snowflake.
In a recent paper, we described an accurate and
efficient method for generating a basis of eigenfunctions of the
Laplacian on this region. In that work, we used the symmetry of the snowflake
region to analyze and post-process the basis, rendering it
suitable for input to the GNGA. The GNGA uses Newton's method on
the eigenfunction expansion coefficients
to find
solutions to the semilinear problem.
This article introduces the {\em bifurcation digraph},
an extension of the lattice of isotropy subgroups.
For our example, the bifurcation digraph shows
the 23 possible symmetry types of solutions to the PDE
and the 59 generic symmetry-breaking bifurcations among these symmetry types.
Our numerical code uses continuation methods, and follows branches created at symmetry-breaking bifurcations,
so the human user does not need to supply initial guesses for Newton's method.
Starting from the known trivial solution, the code automatically
finds at least one solution with each of the symmetry types that we predict can exist.
Such computationally intensive investigations necessitated
the writing of automated branch following code, whereby symmetry
information was used to reduce the number of computations per GNGA
execution and to make intelligent branch following decisions at bifurcation
points. \end{abstract}

\maketitle

\tolerance=10000

\begin{section}{Introduction.}
We seek numerical solutions to the semilinear elliptic boundary
value problem
\begin{align}
\label{pde}
        \Delta u + f_\lam(u) &= 0  \textrm{ in } \Omega \nonumber \\
         u &= 0  \textrm{ on } {\partial \Omega},
\end{align}
where $\Delta$ is the Laplacian operator, $\Omega\subset\R^2$ is the
region whose boundary $\partial\Omega$ is the Koch snowflake,
$u:\Omega \to \R$ is the unknown function, and $f_\lam:\R\to\R$ is a one-parameter family of
odd functions. For convenience, we refer to $\Om$ as the {\em Koch
snowflake region}. This article is one of the first to consider a
nonlinear PDE on a region with fractal boundary. In this paper, we
choose the nonlinearity to be
\begin{eqnarray}
\label{nonlinearity}
f_\lam(u)=\lambda u + u^3,
\end{eqnarray}
and treat  $\lam \in \R$ as the bifurcation parameter.
When the parameter is fixed, we will sometimes use $f$ in place of $f_\lam$.
Using this convention, note that $\lam = f'(0)$.

This paper exploits the hexagonal symmetry of the Koch snowflake region,
and the fact that $f$ is odd.
Our nonlinear code would work with any region
with hexagonal symmetry and any odd `superlinear' function $f$ (see \cite{ccn}),
and with minor modification for other classes of nonlinearities as well.
We chose to work with odd $f$ primarily because of the rich symmetry structure.
The explicit shape of $\Om$ represents a considerable technological challenge
for the computation of the eigenfunctions \cite{lnrg,nss}, which are required as input to the nonlinear code.

It is well known that the eigenvalues of the Laplacian under this boundary condition satisfy
\begin{eqnarray}
\label{evals}
0<\lambda_1<\lambda_2\leq\lambda_3\leq\cdots\to\infty,
\end{eqnarray}
and that the corresponding eigenfunctions
$\{\psi_j\}_{j\in\N}$
can be chosen to be an orthogonal basis
for the Sobolev space $H=H_0^1(\Om)=W_0^{1,2}(\Om)$,
and an orthonormal basis for
the larger Hilbert space $L^2=L^2(\Om)$.
The inner products are
$$
  \langle u,v\rangle_H = \intO \nabla u \cdot \nabla v \dx\ \hbox{ and }
\ \langle u,v\rangle_2 = \intO        u \;           v \dx ,
$$
respectively (see \cite{adams, gt, lapidus, lappang}).
Theorem 8.37 and subsequent remarks in \cite{gt}
imply that the eigenfunctions are in $C^\infty(\Om)$.
In \cite{lappang}, properties of the gradients of eigenfunctions near boundary points
are explored in light of
the lack of regularity of $\partial\Om$.

Using the Gradient-Newton-Galerkin-Algorithm (GNGA, see
\cite{ns}) we seek approximate solutions $u=\sum_{j=1}^{M} a_j \psi_j$ to
(\ref{pde}) by applying Newton's method to the eigenfunction expansion
coefficients of the gradient $\nabla J(u)$ of a nonlinear functional $J$
whose critical points are the desired solutions. The definition of $J$,
the required variational equations, a description of the GNGA, and a brief
history of the problem are the subject of Section~\ref{GNGA_section}.

The GNGA requires as input a basis spanning a sufficiently large but
finite dimensional subspace $B_M = {\rm span}\{\psi_1,\ldots,\psi_M\}$,
corresponding to the first $M$ eigenvalues $\{\lambda_j\}_{j=1}^{M}$.
As described in \cite{nss}, a grid $G_N$  of $N$
carefully placed points is used to approximate the eigenfunctions.
These are the same grid points used for the
numerical integrations required by Newton's method.
Section~\ref{basis_section}
briefly describes the process we use for generating the eigenfunctions.

Section~\ref{symmetry_section} concerns the effects of
symmetry on automated branch following.
The symmetry theory for linear operators found in \cite{nss} is summarized
and then the extensions required for nonlinear operators are described.
Symmetry-breaking bifurcations are analyzed in a way that allows
an automated system to follow the branches created at the bifurcations.
As we develop the theory, we present specific examples applying the
general theory to equation (\ref{pde}) on the snowflake region.
In particular, we find that there are 23 different symmetry types of solutions
to (\ref{pde}), and 59 generic symmetry-breaking bifurcations.
The symmetry types and bifurcations among them are summarized in a
{\em bifurcation digraph}, which generalizes the well-known
lattice of isotropy subgroups (see \cite{vol2}).
As far as we know, the bifurcation digraph is a new way
to organize the information about the symmetry-breaking bifurcations.

Section \ref{sce_section} describes how understanding the symmetry allows
remarkable increases in the efficiency of the GNGA.
Section~\ref{branch_section} describes the automated branch following.
We use repeated executions of the GNGA or a slightly modified algorithm
(parameter-modified GNGA) to follow solution branches of (\ref{pde}, \ref{nonlinearity}).
The GNGA uses Newton's method, which is known to work well if it
has a good initial approximation.  The main shortcoming
of Newton's method is that is works poorly without a good
initial approximation.
We avoid this problem by starting with the trivial solution ($u = 0$).
The symmetry-breaking bifurcations of the trivial solution are
found by the algorithm and the primary branches are started.
The program
follows the branches by continuation methods,
and then follows the new branches created at symmetry-breaking bifurcations.
To follow an existing branch, we vary $\lambda$ slightly between executions.
To start new solution branches created at bifurcation points, we
treat $\lambda$ as a variable and fix one of the null eigenfunctions
of the Hessian evaluated at the bifurcation point.
The symmetry analysis tells which null eigenfunction to use.
In this way solutions with all
23 symmetry types are found automatically, starting from $u=0$,
without having to guess any approximations for Newton's method.

In our experiments, many bifurcation diagrams were generated by applying
the techniques mentioned above. A selection of these diagrams are provided
in Section~\ref{results_section}, along with contour plots of solutions to
(\ref{pde}) corresponding to each of the 23 symmetry types predicted to
exist. We include evidence of the convergence of our algorithm as the
number of modes $M$ and grid points $N$ increase.

Many extensions to our work are possible, including enforcing different
boundary conditions on the same region, solving similar semilinear
equations on other fractal regions, and applying the methodology to partial
difference equations (PdE) on graphs \cite{graph}.
Section~\ref{conclusion_section} discusses some of these possible
extensions.
In particular, we are in the process of re-writing the suite of programs.
We plan to be able to solve larger problems using a parallel environment.
We will be able to solve problems with larger symmetry groups
by automating the extensive group theoretic calculations.
This concluding section also has a discussion of the convergence of the GNGA.

\end{section} 


\begin{section}{GNGA.}
\label{GNGA_section}

We now present the variational machinery for studying (\ref{pde}) and
follow with a brief description of the general GNGA. Section
\ref{branch_section} contains more details of the implementation of the
algorithm for our specific problem. Let $F_\lam(u)=\int_0^u f_\lam(s)\ds$ for all
$u\in{\R}$ define the primitive of $f_\lam$.
We then define the action functional $J:\R \times H\to{\R}$ by
\begin{equation}
\label{Jdef}
J(\lam, u)=\intO \left \{{\textstyle \frac12}|\nabla u|^2 - F_\lam(u) \right \}\dx.
\end{equation}
We will sometimes use $J: H\to{\R}$ to denote $J(\lam, \cdot )$.
%
%
The class of nonlinearities $f$ found in \cite{ccn, cdn2, graph, rab} imply that
$J$ is well defined and of class $C^2$ on $H$.
The choice (\ref{nonlinearity}) we make in this paper belongs to that class.
Critical points of $J$ are by definition weak solutions of (\ref{pde})
(see for example \cite{ccn, rab, gt}),
and clearly classical solutions are critical points.
The usual ``bootstrap'' argument of repeatedly applying Theorem 8.10 of \cite{gt} can be used in our case.
Specifically, $H^k_0$ is embedded in $L^q$ for all $q\geq2$ when the space diminsion $n$ is $2$, regardless of
the regularity of $\partial\Omega$ (due to the zero Dirichlet boundary condition, see \cite{adams}).
Hence $u\in H^k$ implies $f(u)\in H^k$ as well.
As a result, if $u$ is a critical point then $u\in C^\infty(\Omega)\cap C(\bar\Omega)$, hence a classical solution.
If one considered boundary conditions, space dimensions, and nonlinear terms other than the choices made in this paper,
it could happen that critical points  would be weak not classicial solutions.
Regardless, our approximations lie in $B_M\subset C^{\infty}$.
Here, the existence proofs for positive, negative, and sign-changing exactly once
solutions from \cite{ccn, rab} immediately give at least 3 nontrivial (classical) solutions for our specific
superlinear boundary value problem;
appealing to symmetry implies the existence of even more solutions (see for example \cite{graph}).

The choice of $H$ for the domain is crucial to the analysis of the PDE
(see \cite{ccn, conm}, and references therein),
as well as for understanding the theoretical basis of effective steepest descent algorithms
(see \cite{cdn, jwn, num}, for example).
We will work in the coefficient space $\R^M \cong B_M$.
The {\em coefficient vector of} $u \in B_M$ is the vector $a\in\R^M$ satisfying $u=\sum_{j=1}^M a_j \psi_j$.
%
%
Using the corresponding eigenvalues (\ref{evals}) and integrating by parts,
the quantities of interest are
\begin{equation}
\label{JprimeDef}
g_j = J'(u)(\psi_j) = \intO \{\nabla u \cdot \nabla \psi_j -  f(u) \, \psi_j \}
= a_j\lambda_j - \intO f(u) \, \psi_j , \quad \mbox{and}
\end{equation}
%
%
\begin{equation}
\label{JprimePrimeDef}
h_{jk} =
J''(u)(\psi_j,\psi_k) = \intO \{\nabla \psi_j \cdot \nabla \psi_k -  f'(u) \, \psi_j\,\psi_k\}
= \lambda_j\delta_{jk} - \intO f'(u) \, \psi_j \,\psi_k ,
\end{equation}
where $\delta_{jk}$ is the Kronecker delta function.
Note that there is no need for numerical differentiation
when forming gradient and Hessian coefficient vectors and matrices
in implementing Algorithm \ref{algorithm};
this information is encoded in the eigenfunctions.

The vector $g\in\R^M$ and the $M\times M$ matrix $h$ represent suitable projections of the
$L^2$ gradient and Hessian of $J$, restricted to the subspace $B_M$, where all such
quantities are defined.
For example, for $u=\sum_{j=1}^M a_j \psi_j$, $v=\sum_{j=1}^M b_j \psi_j$, and
$w=\sum_{j=1}^M c_j \psi_j$, we have:
$$
P_{B_M}\nabla_2 J(u) = \sum_{j=1}^M g_j \psi_j, \
J'(u)(v) = g\cdot b, \ {\rm and} \
J''(u)(v,w) = hb\cdot c = b\cdot hc.
$$
We can identify $g$ with the approximation
$P_{B_M}\nabla_2 J(u)$ of $\nabla_2J(u)=\Delta u + f(u)$,
which is defined for $u\in B_M$.
The solution $\chi$ to the $M$-dimensional linear system $h\chi=g$ is then
identified with the (suitably projected) search direction
$(D^2_2J(u))^{-1}\nabla_2 J(u)$,
which is not only defined for $u\in B_M$, but is there
equal to $(D^2_HJ(u))^{-1}\nabla_H J(u)$.
We use the least squares solution of $h\chi=g$. In practice,
the algorithm works even near bifurcation points where the Hessian is not invertible.

The heart of our code is Newton's method in the space of eigenfunction coefficients:
\begin{algorithm}{(GNGA)}
\label{algorithm}
\begin{enumerate}
\tt
\item Choose initial coefficients $a=\{a_j\}_{j=1}^{M}$, and set $u=\sum
a_j \psi_j$.
\item Loop
        \begin{enumerate}
        \item Calculate the gradient vector $g=\{J'(u)(\psi_j)\}_{j=1}^{M}$
from equation (\ref{JprimeDef}).
        \item Calculate the Hessian matrix $h=\{J''(u)(\psi_j, \psi_k)\}_{j,\;k=1}^{M}$ from equation (\ref{JprimePrimeDef}).
        \item Exit loop if $|| g ||$ is sufficiently small.
        \item Solve $h \chi = g$ for the Newton search direction $\chi \in \R^M$.
        \item Replace $a \leftarrow a - \chi$ and update $u=\sum a_j \psi_j$.
        \end{enumerate}
\item Calculate $\signa(h)$ and $J$ for the approximate solution.
\end{enumerate}
\end{algorithm}
If Newton's method converges
then we expect that $u$ approximates a solution to the PDE (\ref{pde}),
provided $M$ is sufficiently large and the eigenfunctions and numerical integrations are sufficiently
accurate. See Section \ref{conclusion_section}.

Our estimate for the Morse index (MI) of the critical point of $J$ is the signature of $h$, denoted
$\signa(h)$, which is defined as the number of negative eigenvalues of $h$.  This measures the
number of linearly independent directions away from $u$ in which $J$ decreases quadratically.

The basic Algorithm \ref{algorithm} is modified to take advantage of the symmetry of our problem.
The $M$ integrations required in step (a) and the $M(M+1)/2$ integrations in step (b) are reduced
to fewer integrations if the initial guess has nontrivial symmetry.


We often use a ``parameter-modified'' version of the GNGA (pmGNGA).
In this modification, $\lambda$ is treated as an unknown variable
and one of the $M$ coefficients $a_k$ is fixed. Along a given
branch, symmetry generally forces many coefficients to be zero. When
a bifurcation point is located by observing a change in MI, we can
predict the symmetry of the bifurcating branches using the symmetry
of the null eigenfunctions of the Hessian. By forcing a small
nonzero component in the direction of a null eigenfunction
(orthogonal to the old branch's smaller invariant subspace), we can
assure that the pmGNGA will not converge to a solution lying on the
old branch. Another benefit of the pmGNGA is that it can handle a
curve bifurcating to the right as well as one bifurcating to the
left. In our system, the branches that bifurcate to the right have
saddle node bifurcations where they turn around and go to the left.
The pmGNGA can follow such branches while the normal GNGA cannot.

The implementation of pmGNGA is not difficult.  The $M$ equations
are still
$$
g_i = J'(u) (\psi_i) = 0,
$$
but the $M$ unknowns are
$$
   \tilde a = (a_1,\ldots,a_{k-1},\lambda,a_{k+1},\ldots,a_M),
$$
and the value of one coefficient, $a_k$, is fixed.
Consequently, we replace the Hessian matrix $h$ with a new matrix $\tilde h$ where the
$k$-th column is set to ${\partial g_i}/{\partial \lam} = -a_i$:
$$
\tilde h_{ij} = \left \{
\begin{array}{rl}
h_{ij} & \mbox{if } j \neq k \\
- a_i & \mbox{if } j = k
\end{array}
\right .
.
$$
The search direction $\tilde \chi$ is the solution to the system
$\tilde h \tilde \chi = g$.
The pmGNGA step is
      $$
      \tilde a \leftarrow \tilde a -  \tilde \chi ,
      $$
and then $u$ and $\lambda$ are updated.  After Newton's method converges, the $k$-th column of the original $h_{ij}$
is calculated and the MI of the solution, $\signa(h)$, is computed.


We conclude this section with a very brief history of the analytical and
numerical aspects of the research into (\ref{pde}) given our type of
nonlinearity $f$. Our introduction to this general subject was \cite{ccn},
where a sign-changing existence result was proven. This theorem is
extended in \cite{cdn2}; we indicate briefly in
Section~\ref{results_section} where this so-called CCN solution can be
found on our bifurcation diagrams.
The article
\cite{cdn} was our first success in using symmetry to find higher MI
solutions.
The GNGA was developed in
\cite{ns}, wherein a much more detailed description of the variational
structure and numerical implementation can be found.
The first implementation of the
GNGA for regions where the eigenfunctions
are not known in closed form is in \cite{hn}, where the region
is a Bunimovich stadium.
The article \cite{conm} provides a historical overview of the authors'
experimental results using variants of the Mountain Pass Algorithm (MPA,
MMPA, HLA) and the GNGA, as well as recent analytical results and a list
of open problems; the references found therein are extensive.

\end{section}


\begin{section}{The Basis of Eigenfunctions.}
\label{basis_section}

In \cite{nss}, we describe theoretical and computational results that lead
to the generation of a basis of eigenfunctions solving
\begin{equation}
\label{lpde}
\Delta u + \lambda u = 0 \ \textrm{in } \Omega, \ \
u = 0  \ \textrm{on }  {\partial \Omega}.
\end{equation}
That paper details the grid technique and symmetry analysis that
accompanied the effort; we briefly summarize those results in this
section.

The Koch snowflake is a well-known fractal, with Hausdorff dimension
$\log_3 4$. Following Lapidus, Neuberger, Renka, and Griffith \cite{lnrg},
we take our snowflake to be inscribed in a circle of radius
$\frac{\sqrt{3}}{3}$ centered about the origin.
We use a triangular grid $G_N$ of $N$ points to
approximate the snowflake region.
Then, we identify $u: G_N \rightarrow \R$ with $u \in \R^N$, that is,
\begin{equation}
\label{ui}
u(x_i) = u_i
\end{equation}
at grid points $x_i \in G_N$.
Our paper \cite{nss} differs from \cite{lnrg} in that we use a different placement
of the grid points and a different method of enforcing the boundary condition,
resulting in more accurate eigenvalue estimates
with fewer points.
Figure~\ref{grid} depicts the
levels 2 and 3 grids in the family of grids
used in \cite{nss} to compute eigenfunctions;
we used the first $M$ eigenfunctions computed at levels 4, 5, and 6 in our nonlinear experiments.
The number of grid points
at level $\ell$ is $N = (9^\ell - 4^\ell)/5$, and the spacing between grid points is $h = 2/3^\ell$.
%
\begin{figure}
\begin{center}
\scalebox{.7}{\includegraphics{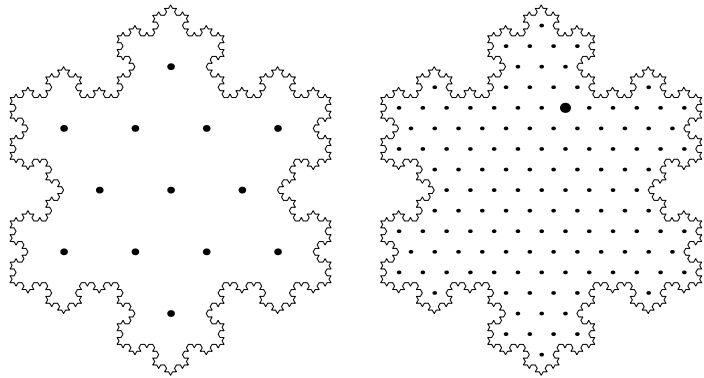}}
\caption{
The Koch snowflake region $\Om$ with the grids $G_{13}$ and $G_{133}$ at levels $\ell = 2$ and 3, respectively.
A generic grid point (which is not on any line of reflection symmetry)
is indicated in the larger grid.
}
\label{grid}
\end{center}
\end{figure}
%
We computed the eigenvalues and
eigenfunctions for (\ref{lpde}) using ARPACK and this approximation to
the Laplacian with zero-Dirichlet boundary conditions:
\begin{align}
\begin{split}
\label{template}
-\Delta u (x) \approx  \frac{2}{3h^2} \left(  (12-\hbox{number of neighbors})\, u(x) -
                       \sum \{\hbox{neighboring values of } u\} \right).
\end{split}
\end{align}

%
The ARPACK is based upon an algorithmic variant of the
Arnoldi process called the Implicitly Restarted Arnoldi Method (see
\cite{arpack}) and is ideally suited for finding the eigen-pairs of the
large sparse matrices associated with the discretization of the Laplacian.

\end{section}

\begin{section}{Symmetry: The Lattice of Isotropy Subgroups and The Bifurcation Digraph.}

\label{symmetry_section}

This section describes equivariant bifurcation theory
as it applies to the branching of solutions to equation (\ref{pde}),
see \cite{chossat, vol2, tsp, lauterbach}.
We are able to describe the expected symmetry types of solutions to (\ref{pde}),
as traditionally arranged in a lattice of isotropy subgroups.
We introduce the {\em bifurcation digraph},
a refinement of the lattice,
which shows every possible generic bifurcation from one symmetry type to another
as a directed edge which is labeled with information about the bifurcation.
The bifurcation digraph is of interest in its own right and summarizes the essential information required
by our automated branch following code.
In this project, GAP (Groups, Algorithms, and Programming, see \cite{GAP}) was used solely to verify
the symmetry analysis we did by hand.
In our continuing projects GAP is a useful tool since it can perform the tedious calculations
and write the information in a format that can be read by the branch following code.
Matthews \cite{matthews} has used GAP to do similar calculations.
We apply this methodology to the snowflake domain being considered in this paper.
The analysis shows that solutions fall into 23 symmetry types,
and that there are 59 types of generic symmetry breaking bifurcations.

\subsection{Group Actions and the Lattice of Isotropy Subgroups}
Let $\Gamma$ be a finite group and $V$ be a real vector space.
A {\em representation} of $\Gamma$
is a homomorphism $\alpha: \Gamma \rightarrow GL(V)$.
Where convenient, we identify $GL(V)$ with the set of invertible matrices with real coefficients.
Every representation $\alpha$ corresponds to
a unique {\em group action}
of $\Gamma$ on $V$ by the rule
$\gamma \cdot v := \alpha(\gamma)(v)$ for all $\gamma \in \Gamma$ and $v \in V$.
We will usually use the action rather than the representation.
The {\em group orbit} of $v$ is $\Gamma \cdot v = \{ \gamma \cdot v \mid \gam \in \Gam \}$.

\begin{example}
\label{d6actions}
Let
$$
\D_6 := \langle \rho, \sigma \mid \rho^6 = \sigma^2 = 1, \, \rho \, \sigma = \sigma \rho^5 \rangle
$$
be the dihedral group with 12 elements.  It is convenient to
define $\tau = \rho^3 \sig$.
It follows that $\sig \tau = \tau \sig = \rho^3$.
The group $\D_6$ is the symmetry of a regular hexagon, and of the Koch Snowflake region $\Omega$.
The standard $\D_6$ action on the plane is given by
\begin{align}
\begin{split}
\label{actionOnPlane} \rho \cdot(x,y)  &= \mbox{$
   \left ( \frac{1}{2} x + \frac{\sqrt{3}}{2} y, -\frac{\sqrt{3}}{2} x + \frac{1}{2} y \right )$} \\
\sig \cdot(x,y)  &= (-x, y) \\
\tau \cdot(x,y)  &= (x, -y) .
\end{split}
\end{align}
In this action, $\rho$ is a rotation by $60^\circ$, $\sig$ is a reflection across the $y$-axis,
and $\tau$ is a reflection across the $x$-axis.  These group actions are depicted in
Figure \ref{sols1}, near the end of the paper.

We will denote subgroups of $\D_6$ by listing the generators.
While any given subgroup of $\D_6$ can be defined using only $\rho$ and $\sig$, we find it
geometrically descriptive to use $\tau$ in certain cases. For example, we prefer $\langle \rho^2, \tau \rangle$
to the equivalent  $\langle \rho^2, \rho \sig \rangle$.
In order to make relationships among subgroups intuitive,
we often include $\tau$ when its membership is implied by the other generators
(see for example Figure \ref{lattice}).

The standard $\D_6$ group action (\ref{actionOnPlane}) is not the only action we consider.
For a function $u \in L^2(\Om)$ and group element $\gamma \in \D_6$,
we define $(\gamma \cdot u)(x) = u(\gamma^{-1}\cdot x)$.
In this paper, a vector $u$ defined by $u_i = u(x_i)$, for a given grid $G_N=\{x_i\}_{i=1}^N$,
is a discrete approximation of a function on $\Omega$.  The $\D_6$ group action on $u \in \R^N$
is a permutation of the components: $(\gamma \cdot u)_i = u(\gamma^{-1}\cdot x_i)$.
Given a function $u \in L^2(\Om)$ or $\R^N$, the group orbit $\D_6 \cdot u$ consists of
functions obtained from $u$ by a reflection or rotation.
\end{example}
\begin{example}
\label{dzaction}
The group $\DZ$, where $\Z_2 = \{1, -1\}$, acts on $L^2(\Om)$ in a natural way.
For all $(\gamma,z)\in \D_6\times\Z_2$, define
\[
(\gamma, z) \cdot u = z (\gamma \cdot u) .
\]
We will denote $(\gamma, 1) \in \DZ$ by $\gamma$ and $(\gamma, -1) \in \DZ$ by $-\gamma$.
With this natural notation $(-\gamma) \cdot u = -(\gamma \cdot u)$, which we
call simply $-\gamma \cdot u$.
\end{example}
Let us recall some facts about group actions, following
\cite{chossat,vol2,tsp}. The {\it isotropy subgroup} or {\it
stabilizer of $v \in V$ in $\Gamma$} is
\[
\stab(v, \Gam):= \{\gamma \in \Gamma \mid \gamma \cdot v = v\} .
\]
The isotropy subgroup measures the symmetry of $v$,
and is sometimes called the little group of $v$, or $\Gamma_v$.
If the group $\Gamma$ is understood, we may simply write $\stab(v)$ in place of $\stab(v, \Gam)$.
The {\em stabilizer of a subset
$W \subseteq V$ in $\Gam$}
is $\stab(W, \Gam):=\{ \gam \in \Gam \mid \gam \cdot W = W \}$.
This must be distinguished from the {\em point stabilizer of a subset}
$$
\pstab(W, \Gam) := \{ \gamma \in \Gamma \mid \gamma \cdot v = v \mbox{ for all } v \in W \} =
\bigcap\{\stab(v, \Gam) \mid v\in W \} .
$$
Another commonly used notation is $\Gamma_W$ for the stabilizer and $\Gamma_{(W)}$ for the point stabilizer.
Note that $\pstab(W, \Gam)$ is always normal in $\stab(W, \Gam)$,
and the {\em effective symmetry group acting on $W$} is
$\stab(W, \Gam)/\pstab(W, \Gam)$, which acts faithfully on $W$.

If $\Sig$ is a subgroup of $\Gam$ then the {\em fixed point subspace of $\Sig$ in $V$} is
\[\fix(\Sig, V):=\{v\in V\mid\ \gamma \cdot v = v \mbox{\rm \ for all }\gam \in \Sig\}.\]
Another notation for the fixed point subspace is $V_\Sig$.
We write $\fix(\Sig)$ when $V$ is understood.

An {\em isotropy subgroup} of the $\Gam$ action on $V$ is the stabilizer of some point $v \in V$.
For some group actions, not every subgroup of $\Gam$ is an isotropy subgroup.

\begin{example}
\label{d6onR12}
Consider the $\D_6$ action on the plane $\R^2$
described in equation (\ref{actionOnPlane}. It is well-known that $\langle \rho \rangle$
is not an isotropy subgroup of this action.

Now consider the $\D_6$ action on the function space $L^2(\Om)$.
We give a standard argument that every subgroup of $\D_6$ is an isotropy subgroup.
Start with a function $u^*$ that is zero everywhere except for a small region,
and suppose that the region is distinct from each of its nontrivial images under the $\D_6$ action.
Then for any subgroup $\Sig \leq \D_6$,
the {\em average of the function $u^*$ over $\Sigma$}, defined as
\begin{equation}
\label{averageOverSubgroup}
P_\Sig (u^*) = \frac{1}{| \Sig |}
\sum_{\gamma \in \Sig} \gamma \cdot u^*
\end{equation}
has isotropy subgroup $\Sigma$.
Therefore every subgroup of the $\D_6$ action on $L^2(\Om)$ is an isotropy subgroup.
The average over the group is an example of a Haar operator,
and $P_\Sig: V \rightarrow \fix(\Sig, V)$
is an orthogonal projection operator \cite{wangzhou2}.

Similarly, every subgroup of $\D_6$
is an isotropy subgroup of the $\D_6$ action on $\R^N$, the space of
functions on the grid $G_N$, provided $\ell \geq 3$.
This follows from averaging the function that is 1 at a generic lattice point, and 0 elsewhere.
Recall that a {\em generic point} is one whose isotropy subgroup is trivial.
Figure \ref{grid} shows that the level two grid $G_{13}$ does not have a generic point,
while the level three grid $G_{133}$ does.
%
Thus, the space of functions on $G_{133}$ has the same isotropy subgroups as $L^2(\Om)$, but
a much smaller space has this same property.
Start with any generic point $x_1 \in \Om$.
Then $\D_6$ acts on
the space of functions on the 12 points
$\D_6 \cdot x_1$.
This $\D_6$ action on $\R^{12}$
has the same structure of isotropy subgroups
as the $\D_6$ action on $L^2(\Om)$, and
is the $\D_6$ action used in our GAP calculations.
The corresponding 12-dimensional representation is
the well-known {\em regular representation} of $\D_6$
(see \cite{scott, sternberg,tinkham}).
\end{example}
The symmetry of functions is described by two related concepts.
A function $q: V \rightarrow \R$ is $\Gam$-{\em invariant} if
$q(\gamma \cdot v)=q(v)$ for all $\gamma \in \Gam$ and all $v \in V$.
Similarly, an operator $T: V \rightarrow V$ is $\Gam$-{\em equivariant}
if $T(\gamma \cdot v)=\gamma\cdot T(v)$ for all $\gamma \in \Gam$ and all $v \in V$.
\begin{example}
The energy functional $J$ defined in equation (\ref{Jdef}) is $\DZ$-invariant.
The nonlinear PDE (\ref{pde}) can be written as $(\Delta+f)(u)=0$,
where $\Delta +f$ is a $\DZ$-equivariant operator.
(There are subtleties concerning the domain and range of $\Delta$.
See \cite{chossat, cdn} for a careful treatment of the function spaces.)
In particular, $\Delta +f$ is $\D_6$-equivariant because the snowflake
region $\Omega$ has $\D_6$ symmetry, and
$(\Delta+f)(-u) = -(\Delta+f)(u)$, since $f$ is odd.
As a consequence, if $u$ is a solution to (\ref{pde}), then so
is every element in its group orbit $(\DZ) \cdot u $.
\end{example}
The isotropy subgroups and fixed point subspaces are important because of the
following simple yet powerful results.  See \cite{chossat, vol2, tsp}.
\begin{prop}
\label{fps}
Suppose $\Gamma$ acts linearly on $V$, $T: V \rightarrow V$ is $\Gam$-equivariant and
$\Sig$ is an isotropy subgroup of $\Gam$.
\begin{itemize}
\item[(a)]
If $v\in \fix(\Sig)$ then
$T(v)\in \fix(\Sig)$.
Thus, $T|_{\fix(\Sig)}: \fix(\Sig) \rightarrow \fix(\Sig)$ is defined.
\item[(b)]
$\stab(\fix(\Sig)) = N_\Gam(\Sig)$, the normalizer of $\Sig$ in $\Gam$,
and $\pstab(\fix(\Sig)) = \Sig$.
\item[(c)]
$T |_{\fix(\Sig)}$ is $N_{\Gam}(\Sig)$-equivariant.
\item[(d)]
$T |_{\fix(\Sig)}$ is $N_{\Gam}(\Sig) / \Sig$-equivariant,
and $N_\Gam(\Sig)/\Sig$ acts faithfully on $\fix(\Sig)$.

\end{itemize}
\end{prop}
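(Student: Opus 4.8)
The plan is to prove the four parts in the order stated, since each rests on the previous ones together with two elementary facts: equivariance lets the group action pass through $T$, and point stabilizers transform by conjugation, $\pstab(\gam \cdot W, \Gam) = \gam\, \pstab(W, \Gam)\, \gam^{-1}$.

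For (a), for $v \in \fix(\Sig)$ and any $\delta \in \Sig$, equivariance gives $\delta \cdot T(v) = T(\delta \cdot v) = T(v)$, so $T(v) \in \fix(\Sig)$; hence $T$ restricts to a self-map of $\fix(\Sig)$. For (b) I would treat the point stabilizer first: the inclusion $\Sig \subseteq \pstab(\fix(\Sig))$ is immediate from the definition of $\fix(\Sig)$, and for the reverse I use that $\Sig$ is an isotropy subgroup, picking $v_0$ with $\stab(v_0) = \Sig$; then $v_0 \in \fix(\Sig)$, so any $\gam \in \pstab(\fix(\Sig))$ fixes $v_0$ and thus lies in $\stab(v_0) = \Sig$. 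For the stabilizer, $N_\Gam(\Sig) \subseteq \stab(\fix(\Sig))$ follows by direct computation: if $\gam$ normalizes $\Sig$ and $v \in \fix(\Sig)$, then for $\delta \in \Sig$ we get $\delta\cdot(\gam\cdot v) = \gam\cdot((\gam^{-1}\delta\gam)\cdot v) = \gam\cdot v$, so $\gam\cdot\fix(\Sig)\subseteq\fix(\Sig)$, and applying this to $\gam^{-1}$ as well gives equality. Conversely, if $\gam\cdot\fix(\Sig) = \fix(\Sig)$, then applying $\pstab$ to both sides and using the conjugation identity together with $\pstab(\fix(\Sig)) = \Sig$ yields $\Sig = \gam\,\Sig\,\gam^{-1}$, so $\gam \in N_\Gam(\Sig)$.

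Parts (c) and (d) are then bookkeeping. By (b), $N_\Gam(\Sig) = \stab(\fix(\Sig))$ acts on $\fix(\Sig)$; for such $\gam$ and $v \in \fix(\Sig)$, both $\gam\cdot v$ and $T(v)$ lie in $\fix(\Sig)$ by (b) and (a), while $T(\gam\cdot v) = \gam\cdot T(v)$ already holds in $V$, giving (c). For (d), since the point stabilizer is always normal in the stabilizer (noted before the proposition), (b) gives $\Sig \trianglelefteq N_\Gam(\Sig)$, and the quotient $N_\Gam(\Sig)/\Sig$ acts on $\fix(\Sig)$ by $(\gam\Sig)\cdot v := \gam\cdot v$; this is well defined because $\Sig$ fixes $\fix(\Sig)$ pointwise, the kernel of the $N_\Gam(\Sig)$-action on $\fix(\Sig)$ equals the point stabilizer $\Sig$ (so the quotient acts faithfully), and the equivariance of $T|_{\fix(\Sig)}$ descends directly from (c).

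The arguments are routine; the only step requiring a moment's care is the equality $\stab(\fix(\Sig)) = N_\Gam(\Sig)$ in (b) — specifically the inclusion $\stab(\fix(\Sig)) \subseteq N_\Gam(\Sig)$, where one should resist trying to show directly that $\gam$ normalizes $\Sig$ and instead argue via the point-stabilizer-under-conjugation identity combined with $\pstab(\fix(\Sig)) = \Sig$.
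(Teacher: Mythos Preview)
Your proof is correct in every part. The paper itself does not supply a proof of this proposition; it simply states the result as ``simple yet powerful'' and refers the reader to \cite{chossat, vol2, tsp}, so there is no in-paper argument to compare against. Your argument is the standard one found in those references, and the one point you flag as requiring care---deducing $\stab(\fix(\Sig)) \subseteq N_\Gam(\Sig)$ via the conjugation identity $\pstab(\gam\cdot W) = \gam\,\pstab(W)\,\gam^{-1}$ combined with the already-established equality $\pstab(\fix(\Sig)) = \Sig$---is exactly the step where the hypothesis that $\Sig$ is an \emph{isotropy} subgroup (not merely a subgroup) is essential.
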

If $\Sigma$ is a subgroup of $\Gamma$, the normalizer of $\Sigma$ in $\Gamma$ is defined to be
$N_{\Gamma}(\Sigma) := \{ \gamma \in \Gamma \mid \gamma \Sigma = \Sigma \gamma \}$,
which is the largest subgroup of $\Gamma$ for which $\Sigma$ is a normal subgroup.
The presence of the normalizer in Proposition~\ref{fps}(b) is interesting, since
the normalizer is a property of the abstract groups, and is independent of the group action.
\begin{example}
As a consequence of Proposition \ref{fps}, we can solve the PDE (\ref{pde}), written as $(\Delta + f)(u)=0$,
by restricting $u$ to
functions in $\fix(\Sig, L^2(\Omega))$.  This leads to a simpler problem since
the function space $\fix(\Sig, L^2(\Omega))$ is simpler than $L^2(\Omega)$.  An example
of this is in Costa, Ding, and Neuberger \cite{cdn}.  The
techniques of that paper, applied to our problem, would
find sign-changing solutions with Morse index 2 within the space $\fix(\D_6, L^2(\Omega))$.
This space consists of all functions which are unchanged under all of the rotations
and reflections of the snowflake region.

Proposition~(\ref{fps}) also applies to the GNGA, since the Newton's method iteration
mapping is $\DZ$-equivariant.
If the initial guess is in a
particular fixed point subspace, all the iterates will be
in that fixed point subspace.
This fact can be used to speed numerical calculations,
as described in Section \ref{sce_section}.
\end{example}

Two subgroups $\Sig_1, \Sig_2$ of $\Gam$ are conjugate ($\Sig_1 \sim \Sig_2$) if
$\Sig_1 =\gam \Sig_2 \gam^{-1}$ for some $\gam\in \Gam$.
The {\em symmetry type} of $v \in V$ for the $\Gamma$ action is the conjugacy class of $\stab(v, \Gam)$.
Note that $\stab(\gamma \cdot v) = \gamma \stab (v) \gamma^{-1}$.
Thus, every element of a group orbit $\Gamma \cdot v$ has the same symmetry type.

Let $\mathcal S=\{S_i\}$ denote the set of all symmetry types of a $\Gam$ action on $V$.
The set $\mathcal S$ has a natural partial order,
with $S_i \leq S_j$ if
there exits $\Sig_i \in S_i$ and $\Sigma_j \in S_j$ such that $\Sig_i \leq \Sig_j$.
The partially ordered set $(\mathcal S, \leq)$ is called the
{\em lattice of isotropy subgroups} of the $\Gam$ action on $V$ \cite{vol2}.
The {\em diagram} of the lattice of isotropy subgroups is
a directed graph with vertices $S_i$ and arrows $S_i \leftarrow S_j$
if, and only if, $S_i \lneq S_j$ and there is no symmetry type between $S_i$ and $S_j$.

\begin{example}
The symmetry type of a solution $u$ to our PDE (\ref{pde}) for the $\DZ$ action
is the conjugacy class of $\stab(u, \DZ)$;
we refer to this as the symmetry type of $u$, without reference to $\DZ$.
The discussion of $\D_6$ acting on $L^2(\Om)$ in Example \ref{d6onR12} can easily be
extended to $\DZ$ acting on $L^2(\Om)$.  Note that if $-1 \in \Sigma \leq \DZ$, then the average
of any function over $\Sigma$ is $u = 0$.  Therefore the only isotropy subgroup of $\DZ$ which contains
$-1$ is $\DZ$ itself.  On the other hand, the argument in Example \ref{d6onR12} shows that
any subgroup of $\DZ$ which does not contain $-1$ is an isotropy subgroup.
Therefore, $\Sigma \leq \DZ$ is an isotropy subgroup
of this group action if and only if $\Sigma = \DZ$ or $-1 \notin \Sigma$.

This result allowed us to compute the isotropy subgroups by hand.
We verified our calculations using GAP.
There are exactly 23 conjugacy classes of isotropy subgroups for the $\DZ$ action on $L^2(\Omega)$,
shown in condensed form in Figure \ref{lattice}.
Thus, a solution to the PDE (\ref{pde}) has one of 23 different symmetry types.
\end{example}
\begin{figure}
\scalebox{1}{
\xymatrix@R=40pt@C=75pt{
 &\fbox{$ \Gam_0 = \langle \rho, \sig, \tau, -1 \rangle = \DZ
$}
  \ar@{->}_<>(.1)4[d]
\\
&
   \fbox{$
   \begin{aligned}
   &\Gam_1 = \langle\rho,\sigma,\tau\rangle = \D_6 \\
   &\Gam_2 = \langle\rho,-\sigma,-\tau\rangle \\ 
   &\Gam_3 = \langle-\rho,\sigma,-\tau\rangle \\ 
   &\Gam_4 = \langle-\rho,-\sigma,\tau\rangle\\ 
   \end{aligned}
   $}
    \ar@{->}_<>(.1)2_<>(.9)2[d]
    \ar@{->}_<>(.95)2[rd]
    \ar@{->}[ld]
\\
   \fbox{$
   \begin{aligned}
   &\Gam_5  =\langle\sigma,\tau\rangle   \\ 
   &\Gam_6  =\langle-\sigma,-\tau\rangle \\ 
   &\Gam_7  =\langle\sigma,-\tau\rangle  \\ 
   &\Gam_8 = \langle-\sigma,\tau\rangle  \\ 
   \end{aligned}
   $}
   \ar@{->}_<>(.1)2_<>(.9)2[d]
   \ar@{->}_<>(.95)2[rd]
 &
   \fbox{$
   \begin{aligned}
   &\Gam_9     =\langle\rho^2,\sigma\rangle  \\ 
   &\Gam_{10}  =\langle\rho^2,\tau\rangle    \\ 
   &\Gam_{11}  =\langle\rho^2,-\tau\rangle   \\ 
   &\Gam_{12}  =\langle\rho^2,-\sigma\rangle \\ 
   \end{aligned}
   $}
    \ar@{->}_<>(.85)4[rd]
    \ar@{->}[ld]
&
   \fbox{$
   \begin{aligned}
  &\Gam_{13}  =\langle\rho\rangle     \\ 
  &\Gam_{14}  =\langle-\rho\rangle    \\ 
    \end{aligned}
    $}
    \ar@{->}_<>(.9)2[d]
    \ar@{->}[ld]
\\
 \fbox{$
    \begin{aligned}
   &\Gam_{15}  =\langle\sigma\rangle  \\ 
   &\Gam_{16}  =\langle\tau\rangle    \\ 
   &\Gam_{17}  =\langle-\tau\rangle   \\ 
   &\Gam_{18}  =\langle-\sigma\rangle \\ 
   \end{aligned}
 $}
    \ar@{->}_<>(.9)4[rd]
&
   \fbox{$
   \begin{aligned}
   &\Gam_{19}  =\langle\rho^3\rangle  \\ 
   &\Gam_{20}  =\langle-\rho^3\rangle \\ 
   \end{aligned}
   $}
    \ar@{->}_<>(.9)2[d]
&
   \fbox{$
 \Gam_{21} = \langle\rho^2\rangle 
   $}
    \ar@{->}[ld]
\\
&
 \fbox{$
 \Gam_{22} = \langle 1 \rangle 
 $}
} }
\caption
{The condensed diagram of the isotropy lattice (see \cite{vol2}) for the
$\DZ$ action on $L^2(\Omega)$. The vertices of this diagram are the
symmetry types (equivalence
classes of isotropy subgroups).
We follow the convention \cite{chossat,vol2,tsp}
that one element $\Gam_i$ of each symmetry type $S_i = [ \Gam_i ]$ is listed.
The representatives $\Gam_i$ have the property that
$\Gam_i \leq \Gam_j$ iff $S_i \leq S_j$.
Contour plots of
solutions to PDE (\ref{pde}) with each of the 23 symmetry types are given
in Figures~\ref{sols1} and \ref{sols2}.
The diagram of the isotropy lattice is condensed
as in \cite{thomaswood}.
The small numbers on the edges tell the
number of connections emanating from each symmetry type in a box.
A missing small number means 1.
For example,
the two arrows representing $[\Gam_{21}] \leq [\Gam_{13}]$ and $[\Gam_{21}] \leq [\Gam_{14}]$
in the full diagram
are collapsed to a single arrow in the condensed diagram.
For $\Gam_0$ through $\Gam_4$, the $\tau$ generator is redundant since $\tau = \rho^3 \sig$, but its presence
makes the subgroups manifest.  For example,
$\Gam_2 = \langle \rho, -\sigma, -\tau \rangle = \langle \rho, -\sigma \rangle$, but the three generators
make it clear that
$\langle -\sigma, -\tau \rangle \leq \langle \rho, -\sigma, -\tau \rangle$.
}
\label{lattice}
\end{figure}

\subsection{Irreducible Representations and the Isotypic Decomposition}
In order to understand the symmetry-breaking bifurcations we
need to first understand irreducible representations and
the isotypic decomposition of a group action.
The information about the irreducible representations
is summarized in character tables \cite{scott, sternberg, thomaswood, tinkham}.
For our purposes, irreducible representations over the field $\R$
are required, see \cite{chossat, vol2, tsp}.
The {\em irreducible representations of $\Gamma$} are homomorphisms from $\Gamma$ to the space of
$d_j \times d_j$ real matrices: $\gamma \mapsto \alpha^{(j)}(\gamma)$, such that no proper
subspace of $\R^{d_j}$ is invariant under $\alpha^{(j)}(\gamma)$ for all $\gam \in \Gam$.
The {\em dimension of the irreducible representation $\alpha^{(j)}$} is $d_j$.
We call $W \subseteq V$ a {\em $\Gamma$-invariant subspace of $V$} if $\Gamma \cdot W \subseteq W$.
An {\em irreducible subspace of $V$} is an invariant subspace with no proper invariant subspaces.
Every irreducible subspace of the $\Gamma$ action on $V$ corresponds to a unique (up to similarity)
irreducible representation of $\Gamma$.
The dimension of the irreducible subspace is the same as the dimension
of the corresponding irreducible representation.

For each irreducible representation $\alpha^{(j)}$ of $\Gamma$,
the {\em isotypic component} of $V$ for the $\Gamma$ action,
denoted by $V_{\Gamma}^{(j)}$, is defined to be the direct sum of all of the
irreducible subspaces corresponding to the fixed $\alpha^{(j)}$
\cite{chossat, vol2, tsp, nss}.
The {\em isotypic decomposition} of $V$ is then
\begin{equation}
\label{isotypic}
V = \bigoplus_j V_{\Gamma}^{(j)}.
\end{equation}
Some of the isotypic components might be the single point at the origin.  These
can be left out of the isotypic decomposition.
A description of the isotypic components in terms of projection operators is
given in \cite{nss}.

For any group $\Gamma$, we denote the trivial representation by $\alpha^{(1)}$.
That is $\alpha^{(1)}(\gamma) = 1$ for all $\gamma \in \Gam$.  Thus,
if $\Gam$ is an isotropy subgroup of a $\Gamz$ action on $V$, then
$$
V^{(1)}_{\Gam} = \fix (\Gam, V) .
$$
\begin{example}
\label{isotypicD6}
Let us consider the $\D_6 = \langle \rho, \sig, \tau \rangle$ action on $L^2(\Omega)$.
We need to consider the six irreducible representations of $\D_6$,
which are listed in \cite{nss}, to find the isotypic decomposition of $L^2(\Omega)$.
Since these
isotypic components are central to our problem, we drop the $\D_6$ and
define $V^{(j)} := V^{(j)}_{\D_6}$, $j = 1, 2, \ldots, 6$ as follows:
\begin{align}
\label{v1-6}
V^{(1)} &=
\{u \in L^2(\Omega) \mid \rho \cdot u = u, ~ \sig \cdot u = u , ~ \tau\cdot u = u \} \\
V^{(2)} &=
\{u \in L^2(\Omega) \mid \rho \cdot u = u, ~\sig \cdot u = -u , ~ \tau \cdot u = -u \} \notag \\
V^{(3)} &=
\{u\in L^2(\Omega) \mid \rho \cdot u = -u, ~ \sig \cdot u = u , ~ \tau \cdot u = -u \} \notag \\
V^{(4)} &=
\{u \in L^2(\Omega) \mid \rho \cdot u =-u, ~ \sig \cdot u = -u , ~ \tau \cdot u = u \} \notag\\
V^{(5)} &=
\{u \in L^2(\Omega) \mid \rho^3 \cdot u = u, ~ u + \rho^2 \cdot u + \rho^4 \cdot u = 0 \} \notag\\
V^{(6)} &=
\{u \in L^2(\Omega) \mid \rho^3 \cdot u = -u, ~ u +\rho^2 \cdot u + \rho^4 \cdot u = 0 \} . \notag
\end{align}
\end{example}


\begin{example}
\label{isotypicZ6}
The isotypic decomposition of $\Gam_{13} = \langle \rho \rangle \cong \Z_6$ illustrates some features
of {\em real} representation theory.  The irreducible representations of $\Z_6$ over $\C$ are all one-dimensional.
They are $\alpha^{(j)}(\rho) = (e^{i \pi/3})^{j-1}$ for
$j = 1, 2, \ldots, 6$.
Over the field $\R$, however, the one-dimensional irreducible representations of $\Z_6$ are given by
\begin{equation}
\label{1d_irreps}
\alpha^{(1)}(\rho) = 1 , \ \ \alpha^{(2)}(\rho) = -1,
\end{equation}
and the two-dimensional irreducible representations of $\Z_6$,
up to similarity transformations,
are given by
\begin{equation}
\label{2d_irreps}
\alpha^{(3)}(\rho) = \left (
\begin{array}{cc}
-\frac{1}{2} & \frac{\sqrt{3}}{2} \\
-\frac{\sqrt{3}}{2} & -\frac{1}{2} \ \
\end{array}
\right )
, \ \
\alpha^{(4)}(\rho) = \left (
\begin{array}{cc}
\frac{1}{2} & \frac{\sqrt{3}}{2} \\
-\frac{\sqrt{3}}{2} \ \ & \frac{1}{2}
\end{array}
\right ) .
\end{equation}
Note that $\alpha^{(3)}(\rho)$ is matrix for a rotation by $120^\circ$ and $\alpha^{(4)}(\rho)$ is a
$60^\circ$ rotation matrix.

An irreducible representation over $\R$ is called {\em absolutely irreducible} if it is also irreducible over
$\C$.  For example, all of the irreducible representations of $\D_6$ listed in \cite{nss} are absolutely irreducible,
as are the one-dimensional irreducible representations of $\Z_6$ in equation (\ref{1d_irreps}).
On the other hand, the two-dimensional irreducible representations of $\Z_6$ in equation (\ref{2d_irreps})
are {\em not} absolutely irreducible.

The four isotypic components of the $\langle \rho \rangle$ action on $L^2(\Om)$ are
$$
V^{(1)}_{\langle \rho \rangle} = \{u \in L^2(\Om) \mid \rho \cdot u = u \} = V^{(1)} \oplus V^{(2)}
$$
$$
V^{(2)}_{\langle \rho \rangle} = \{u \in L^2(\Om) \mid \rho \cdot u = -u \} = V^{(3)} \oplus V^{(4)}
$$
$$
V^{(3)}_{\langle \rho \rangle} = V^{(5)}, \ \mbox{and} \ V^{(4)}_{\langle \rho \rangle} = V^{(6)} .
$$
If we had used the complex irreducible representations, some of the corresponding isotypic components
would contain complex-valued functions.
It is more natural to use real irreducible representations, and consider only real-valued functions.
The price we pay is that most of the representation theory found in books,
and built into GAP, is done
for complex irreducible representations.
\end{example}
The isotypic decomposition for each of the 23 isotropy subgroups, $\Gam_i$, of $\DZ$
can be written as a direct sum of some subset of the eight spaces
$V^{(j)}$, for $j = 1, \ldots, 4$, and $V^{(j)}_1$ and $V^{(j)}_2$ for $j = 5, 6$
defined in (\ref{v1-6}) and \cite{nss}.
The C++ program can easily check if a function is in any of the isotypic components
$V^{(j)}_{\Gam_i}$ of $B_M$ for each of the $\Gam_i$, $i = 0, 1, \ldots, 22$, actions.

\subsection{Symmetry-Breaking Bifurcations}
The fact that there are 23 {\em possible} symmetry types of solutions
to the PDE (\ref{pde}) begs the question, do solutions with each of these symmetry types exist?
Clearly the trivial solution $u=0$, with symmetry type $S_0$, exits.
Our procedure for finding approximate solutions with each of these symmetry types is
to start with the trivial solution and recursively
follow solution branches created at symmetry-breaking bifurcations.

Let us start by abstracting the PDE defined by (\ref{pde}),
which depends on the real parameter $\lambda$.
Let $V$ be an inner product space and
$J:\R\times V\to\R$ be a family of $\Gamz-$invariant functions that depends on a parameter
$\lambda$.  That is, $J(\lam, \gam \cdot u) = J(\lam, u)$ for all $\gam \in \Gamz$ and $u \in V$.
It is understood that $\Gamz$ is the {\em largest} known group for which $J$ is invariant;
of course $J$ is also invariant under any subgroup of $\Gamz$.
We will use $\Gam$, or $\Gam_i$, to refer to an isotropy subgroup of the ``full'' group $\Gamz$.
Consider the steady-state bifurcation problem $g(\lambda,u)=0$, where
$g(\lambda, u)=\nabla J(\lambda,u)$.  Throughout this paper, the gradient $\nabla$ acts on the $u$ component.
The solutions to $g(\lam, u) = 0$ are critical points of
$J$, so we use the terms ``solution" and ``critical point"
interchangeably.
Note that $g:\R\times V\to V$ is a family of
$\Gamz-$equivariant gradient operators on $V$.  That is,
$g(\lambda,\gamma\cdot u) = \gamma\cdot g(\lambda,u)$.
For our PDE, $\Gamma_0 = \DZ$.
In the numerical implementation, $V = \R^M \cong B_M$ and $g$
is defined in (\ref{JprimeDef}).

We define a {\it branch of solutions} to be a connected component of
$\{ (\lambda,u)\in \R\times L^2(\Omega) \mid g(\lam, u) = 0, \ \stab(u) = \Gam \}$,
where $\Gam$ is called the isotropy subgroup, or symmetry, of the branch.
A branch of solutions $B_1$ has a {\em symmetry-breaking bifurcation} at the {\em bifurcation point}
$(\lam^*, u^*) \in B_1$ if a branch of solutions, $B_2$, with a different symmetry,
has $(\lam^*, u^*)$  as a limit point
but $(\lam^*, u^*) \notin B_2$.  We say that branch $B_2$ is {\em created} at this bifurcation, and often
refer to $B_1$ as the {\em mother branch} and $B_2$ as the {\em daughter branch}.
The symmetry of the daughter branch is always a proper
subgroup of the symmetry of the mother branch.
That is, the daughter has less symmetry than the mother.

The main tool for finding bifurcation points is the Hessian of the
energy functional, $h$.
If $(\lam^*, u^*)$ is a bifurcation point, then $h(\lam^*,
u^*)$ is not invertible, since otherwise the implicit function
theorem would guarantee the existence of a unique local solution
branch.
The {\em Morse index} (MI) of a critical point $(\lam, u)$
is defined to be the number of negative eigenvalues of $h(\lam, u) = D^2 J (\lam, u)$,
provided no eigenvalue is 0.
The Hessian is symmetric, so all of its eigenvalues are real.
The MI on a branch of solutions typically changes at a bifurcation point.
\begin{example}
The trivial solution to (\ref{pde}, \ref{nonlinearity}) is $u = 0$,
and the {\it trivial branch} is $\{ (\lam, 0) \mid \lam \in \R \}$.
Since $h(\lam, 0)(v) = \Delta v + \lam v$,
the bifurcation points of the trivial branch are $( \lam_i, 0)$, where $\lam_i, i \in \N$,
are the eigenvalues (\ref{evals}).
If $\lam_i < \lam < \lam_{i+1}$, then the MI of the trivial solution $(\lam, 0)$ is $i$.
The $i$-th {\it primary branch} is created at the bifurcation point $(\lam_i, 0)$ on the trivial branch.
In cases with double eigenvalues
there are two branches created at the same point in our problem.
For example, the second and third primary branches are created at $\lam_2 = \lam_3$.
Near $(\lam_i, 0)$, the solutions on the $i$-th primary branch are approximately
some constant times the $i$-th
eigenfunction of the Laplacian, $\psi_i$.
\end{example}

We define a {\em degenerate critical point}, or
a {\em degenerate solution}, to be a point $(\lam^*, u^*)$ which satisfies
$g(\lam^*, u^*) = 0$ and $\det h(\lam^*, u^*) = 0$.
Thus, every bifurcation point is a degenerate critical point.
Some degenerate critical points are not bifurcation points. For example,
when a branch folds over and is not monotonic in $\lambda$, the {\em fold point}
is degenerate, but is not a bifurcation point as we have defined it.
(Note that we avoid the term ``saddle-node bifurcation'' since there is really no bifurcation.)


Let us develop some notation to talk about bifurcations.
Suppose that $(\lam^*, u^*)$ is an isolated degenerate critical point of a $\Gamz$-equivariant system
$g(\lam, u) = 0$.
Let $\Gamma = \stab(u^*, \Gamz)$, and
define $L := h(\lam^*, u^*)$.
Note that $\Gam$, not $\Gamz$, is important as far as the bifurcation of $(\lam^*, u^*)$ is concerned.
Let $E$ be the null space of the $\Gam$-equivariant operator $L$.  We call $E$ the {\em center eigenspace}.
Let $\Gamma'$ be the point stabilizer of $E$.  The definitions are repeated here for reference:
\begin{equation}
\label{definitions}
\Gamma := \stab(u^*, \Gamz) ,  ~ ~ L := h(\lam^*, u^*), ~ ~ E := N(L), ~ ~ \Gamma^\prime := \pstab(E, \Gam) .
\end{equation}

If $e \in E$, then $L(e) = 0$ by definition.  For any $\gamma \in \Gamma$,
$\gamma\cdot e \in E$ since the $\Gam$-equivariance of $L$ implies that
$L(\gamma \cdot e) = \gamma \cdot L(e) = 0$.
Hence,
$$
\stab(E, \Gam) = \Gamma .
$$
Note that $\stab(E,\Gam)/\pstab(E, \Gam) = \Gamma/\Gamma'$ acts faithfully on $E$.
In the usual case where $(\lam^*, u^*)$ is a bifurcation point,
not just a degenerate critical point,
we say that $\Gamma/\Gamma'$ is the {\em symmetry group of the bifurcation}, or
that $(\lam^*, u^*)$ undergoes a {\em bifurcation with $\Gamma/\Gamma'$ symmetry}.

In the notation of (\ref{definitions}),
$L$ sends each of the isotypic components
$V^{(j)}_{\Gamma}$ to itself \cite{nss, sternberg, tinkham}.
Barring ``accidental degeneracy,'' the center eigenspace $E$ is a $\Gamma$-irreducible subspace.
Thus, $E$ is typically a subspace of exactly one isotypic
component $V^{(j)}_{\Gamma}$,
and $\dim(E)$ is the dimension $d_j$ of the corresponding
corresponding irreducible representation, $\alpha^{(j)}$.
Furthermore, the point stabilizer of $E$
is the kernel of $\alpha^{(j)}$ and can be computed without knowing $E$.
In summary,
at a generic bifurcation point there is some irreducible representation $\al^{(j)}$
of $\Gam$ such that:
$$
E \mbox{ is } \Gam\mbox{-irreducible}, \quad E \subseteq V^{(j)}_{\Gamma}, \quad
\dim(E) = \Delta MI = d_j , \quad
\Gamma' = \{ \gamma \in \Gamma \mid \alpha^{(j)}(\gamma) = I \} .
$$
Accidental degeneracy is discussed in \cite{nss, sternberg, tinkham}.
We did not encounter any accidental degeneracy in our numerical investigation of (\ref{pde}, \ref{nonlinearity}),
so we will not discuss it further here.

We finally have the background to describe the bifurcations which occur in equivariant systems.
The goal is to predict what solutions will be created
at each of the symmetry breaking bifurcations, and know what vectors in $E$ to use to
start these branches using the pmGNGA.
While such a prediction is impossible for some complicated groups, we can determine how
to follow all of the bifurcating branches in the system
(\ref{pde}, \ref{nonlinearity}).
We follow the treatment and notation of \cite{vol2, tsp}.
At a symmetry-breaking bifurcation we can translate $(\lam^*, u^*)$ to the origin, and
we could, in principle,
do an equivariant Liapunov-Schmidt reduction or center manifold reduction
to obtain reduced bifurcation equations $\tilde{g}: \R \times E \rightarrow E$ where
$\tilde{g}(0,0) = 0$, $D\tilde{g}(0,0) = 0$, and $\tilde{g}$ is $\Gamma := \stab(u^*)$-equivariant.
It is important to realize that we do not actually need to perform the Liapunov-Schmidt reduction.

The most powerful tool for understanding symmetry breaking bifurcations is the Equivariant Branching Lemma.
Recall that absolutely irreducible representations were defined in Example \ref{isotypicZ6}.
See \cite{chossat, vol2, tsp} for a thorough discussion of the Equivariant Branching Lemma,
including further references.
\begin{thm}{\bf Equivariant Branching Lemma (EBL)}
Suppose $\Gamma$ acts absolutely irreducibly on the space $E$, and
let $\tilde g: \R \times E \rightarrow E$ be $\Gamma$-equivariant.
Assume that $\Gamma$ acts nontrivially, so $\tilde g(\lam, 0) = 0$.
Since $\Gam$ acts absolutely irreducibly,
$D\tilde g(\lam, 0) = c(\lam) I_d$ for some function $c: \R \rightarrow \R$,
where $I_d$ is the identity matrix of size $d = \dim(E)$.
Assume that $c(0) = 0$ and $c'(0) \neq 0$.
Let $\Sigma$ be an isotropy subgroup of the $\Gamma$ action on $E$
with $\dim \fix(\Sigma, E) = 1$.
Then there are at least two solution branches of $\tilde g(\lam, u) = 0$
with isotropy subgroup $\Sigma$ created at $(0,0)$.
\end{thm}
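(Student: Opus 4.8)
The plan is to prove the Equivariant Branching Lemma by a direct application of the implicit function theorem inside the one-dimensional fixed point subspace $\fix(\Sigma, E)$. The key observation is that although $D\tilde g(0,0) = 0$ so the IFT fails on all of $E$, the restriction of $\tilde g$ to $\R \times \fix(\Sigma, E)$ is a scalar equation in essentially two scalar variables, and after factoring out a trivial zero we get a non-degenerate situation where the IFT applies.

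\medskip
\noindent\textbf{Step 1: Restrict to the fixed point subspace.}
Since $\Sigma \leq \Gamma$ is an isotropy subgroup of the $\Gamma$ action on $E$, and $\tilde g$ is $\Gamma$-equivariant, Proposition~\ref{fps}(a) (with $T = \tilde g(\lam, \cdot)$ for each fixed $\lam$) tells us that $\tilde g$ maps $\R \times \fix(\Sigma, E)$ into $\fix(\Sigma, E)$. Choose a unit vector $v_0$ spanning the one-dimensional space $\fix(\Sigma, E)$, and write points of $\fix(\Sigma, E)$ as $t v_0$ with $t \in \R$. Define the scalar function $\phi(\lam, t)$ by $\tilde g(\lam, t v_0) = \phi(\lam, t)\, v_0$. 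Solutions of $\tilde g = 0$ in this subspace correspond to zeros of $\phi$.

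\medskip
\noindent\textbf{Step 2: Factor out the trivial branch and set up the IFT.}
Because $\Gamma$ acts nontrivially (and absolutely irreducibly, hence with no nonzero fixed vectors in $E$), we have $\tilde g(\lam, 0) = 0$, so $\phi(\lam, 0) = 0$ for all $\lam$; thus $\phi(\lam, t) = t\, \Phi(\lam, t)$ for a $C^1$ (or smooth) function $\Phi$, where $\Phi(\lam, 0) = \partial_t \phi(\lam, 0)$. The hypothesis that $D\tilde g(\lam, 0) = c(\lam) I_d$ gives $\partial_t \phi(\lam, 0) = c(\lam)$, so $\Phi(\lam, 0) = c(\lam)$. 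Now apply the implicit function theorem to the equation $\Phi(\lam, t) = 0$ at the point $(\lam, t) = (0,0)$: we have $\Phi(0,0) = c(0) = 0$ and $\partial_\lam \Phi(0,0) = c'(0) \neq 0$ by assumption. Hence there is a unique $C^1$ function $\lam = \Lambda(t)$, defined for $t$ near $0$, with $\Lambda(0) = 0$ and $\Phi(\Lambda(t), t) = 0$. This produces a branch of nontrivial solutions $t \mapsto (\Lambda(t), t v_0)$ of $\tilde g = 0$ passing through $(0,0)$; for $t \neq 0$ small it is genuinely nontrivial since $t v_0 \neq 0$.

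\medskip
\noindent\textbf{Step 3: Verify the isotropy and count the branches.}
It remains to check that the solutions just found actually have isotropy subgroup exactly $\Sigma$, and that there are at least \emph{two} branches. For the isotropy: any $u = t v_0$ with $t \neq 0$ has $\stab(u, \Gamma) \supseteq \Sigma$ (since $v_0 \in \fix(\Sigma, E)$), and equality holds because $\dim \fix(\stab(u), E) \geq \dim(\R v_0) = 1$ forces $\fix(\stab(u), E) = \fix(\Sigma, E)$, and $\Sigma$ being an isotropy subgroup with this one-dimensional fixed space is maximal among subgroups with that fixed space; a standard argument (shrinking $t$ if necessary so no extra degeneracy occurs, and using that $\stab(t v_0)$ is locally constant in $t$ off $t = 0$) gives $\stab(t v_0, \Gamma) = \Sigma$. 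For the count: the two values $t$ and $-t$ give solutions $(\Lambda(t), t v_0)$ and $(\Lambda(-t), -t v_0)$ which, when $\Lambda$ is not even, lie on distinct branches; more robustly, the group orbit $\Gamma \cdot (t v_0)$ contains $-t v_0$ precisely when some $\gamma \in \Gamma$ reverses $v_0$, and in the standard formulation one shows the solution set near $(0,0)$ in $\R \times E$ contains at least two distinct solution branches with isotropy $\Sigma$ (they may be related by the group action, which is allowed). One can also invoke the parametrization $t \mapsto (\Lambda(t), t v_0)$ directly: restricting to $t > 0$ and $t < 0$ gives the two asserted branches emanating from $(0,0)$.

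\medskip
\noindent\textbf{The main obstacle} is Step 3 --- pinning down that the isotropy subgroup of the bifurcating solutions is \emph{exactly} $\Sigma$ rather than something larger, and making the ``at least two branches'' claim precise. The IFT argument in Steps 1--2 is routine once one has the right setup; the subtlety is entirely representation-theoretic and hinges on the maximality of $\Sigma$ among subgroups whose fixed subspace in $E$ equals the given line. This is handled in \cite{chossat, vol2, tsp}; here I would cite that literature for the careful statement and give only the IFT construction in detail, noting that for our application the precise branch-counting is less important than the existence of \emph{some} branch with the predicted symmetry $\Sigma$, which the pmGNGA can then follow numerically.
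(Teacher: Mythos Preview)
The paper does not actually prove the Equivariant Branching Lemma: it states the result and refers the reader to \cite{chossat, vol2, tsp} for a full discussion and proof. So there is no ``paper's own proof'' to compare against; your write-up is in effect supplying what the paper deliberately omits.

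That said, your argument is the standard one found in those references (restrict to the one-dimensional space $\fix(\Sigma,E)$, factor out the trivial branch via Hadamard's lemma, then apply the implicit function theorem to solve $\Phi(\lambda,t)=0$ for $\lambda$ as a function of $t$), and Steps~1--2 are correct. Your Step~3 is slightly muddled. The isotropy claim can be settled cleanly: since $\Sigma$ is assumed to be an isotropy subgroup of the $\Gamma$ action on $E$, there is some nonzero $u\in E$ with $\stab(u,\Gamma)=\Sigma$; but then $u\in\fix(\Sigma,E)$, so $u=c\,v_0$ for some $c\neq 0$, whence $\stab(v_0,\Gamma)=\stab(u,\Gamma)=\Sigma$. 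Thus every nonzero point $t v_0$ on the line has isotropy exactly $\Sigma$, with no appeal to maximality or ``shrinking $t$'' needed. For the ``at least two branches'' count, your final sentence (the half-curves for $t>0$ and $t<0$) is the intended reading; the earlier digression about whether $\Lambda$ is even and whether $-v_0$ lies in the $\Gamma$-orbit of $v_0$ is unnecessary and a bit confusing.
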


The EBL, combined with Liapunov-Schmidt theory,
implies that there are at least two solution branches of the full problem $g(\lam, u) = 0$
with isotropy subgroup $\Sigma$
created at the bifurcation point $(\lam^*, u^*)$.
We call these newly created branches {\em EBL branches} since their existence can be predicted by
the EBL.
Other branches created at a bifurcation are called {\em non-EBL branches}.

Following \cite{chossat, vol2, tsp}, we define a {\em maximal isotropy subgroup}
of a $\Gam$ action on $V$ to be an isotropy subgroup $\Sig \neq \Gam$ with the property
that if $\Theta$ is an isotropy subgroup such that $\Sig \leq \Theta$,
then $\Theta = \Sig$ or $\Theta = \Gam$.  In other words, a maximal isotropy subgroup
is a maximal proper isotropy subgroup.
If $\dim(\fix(\Sig, E)) = 1$, then $\Sig$ is
a maximal isotropy subgroup of the $\Gam$ action on $E$.  The converse, however, is not true.

In gradient systems, for example the PDE (\ref{pde}), more can be said.  If $\Sigma$ is any
maximal isotropy subgroup of the $\Gamma$ action on $E$, then there is typically a solution branch created
at the bifurcation with isotropy subgroup $\Sigma$.  If $\dim \fix(\Sigma, E) \geq 2$, the branch created
is an example of a non-EBL branch.
See \cite{smoller} for a discussion of bifurcations in gradient systems.

By Proposition \ref{fps},
the effective symmetry group of $\tilde g$,
restricted to $\fix(\Sigma, E)$, is $N_{\Gamma}(\Sigma)/\Sigma$.
This effective symmetry group determines
how solutions with symmetry $\Sig$ bifurcate.

\begin{example}
\label{G1bifs}
\begin{figure} 
\scalebox{1}{
\xymatrix@C=10pt{
\langle \rho,\sigma,\tau \rangle
&
\langle \rho,\sigma,\tau \rangle \ar[d]
&
\langle \rho,\sigma,\tau \rangle \ar[d]
&
\langle \rho,\sigma,\tau \rangle \ar[d]
&
\langle \rho,\sigma,\tau \rangle \ar[d]
&
&
\langle \rho,\sigma,\tau \rangle \ar[dl] \ar[rd]
&
&
\\
&
\langle \rho \rangle
&
\langle \rho^2,\sig \rangle
&
\langle \rho^2,\tau \rangle
&
\langle \sigma,\tau \rangle \ar[d]
&
\langle \sig \rangle \ar[rd]
&
&
\langle \tau \rangle \ar[dl]
&
\\
&&&&
\langle \rho^3 \rangle &
&
\langle 1 \rangle  &
         &
}
}
\caption{
\label{bifD6}
Diagrams of the six isotropy lattices for the actions of $\D_6 = \langle \rho,\sigma,\tau \rangle$
on each of the six isotypic components $V^{(j)}$ of the
$\D_6$ action on $L^2(\Om)$.
This describes the six possibilities (barring accidental degeneracy) for
the $\D_6$ action on the center eigenspace $E$ at a degenerate critical point.
}
\end{figure}
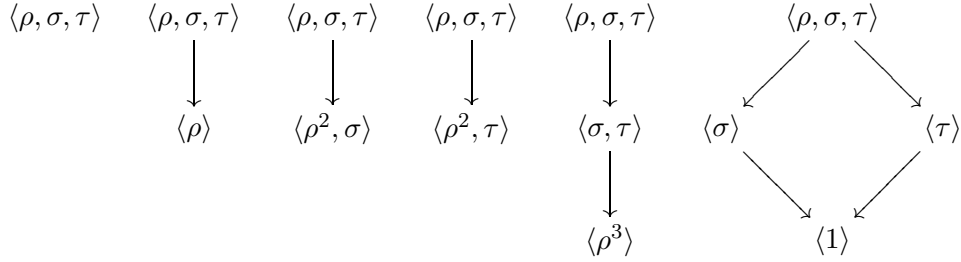
Consider a degenerate critical point with isotropy subgroup $\Gamma_1 = \D_6 = \langle \rho,\sigma,\tau \rangle$.
Barring accidental degeneracy, the center eigenspace $E$ is a subspace of one of the 6 isotypic components of the
$\D_6$ action on $L^2(\Omega)$ described in Example \ref{isotypicD6}.
Figure \ref{bifD6} shows the lattice of isotropy subgroups for $\D_6$
acting on each of these 6 isotypic components $V^{(j)}$.
These 6 cases can be distinguished by determining
which isotypic component an arbitrary eigenfunction in $E$ belongs to.
We shall go through each of these six cases, and describe the resulting bifurcation.
Recall that $\Gam = \Gam_1 =\D_6$ for each of these six cases, and
$\Gam'=\pstab(E, \Gam)$.
$$
\begin{aligned}
E \subseteq V^{(1)} & \Rightarrow &&\Gamma' = \Gam_1 = \langle \rho, \sig, \tau \rangle,
&&\dim E = 1, &&\Gam/{\Gamma}' \cong \langle 1 \rangle \\
E \subseteq V^{(2)} & \Rightarrow &&\Gam' = \Gam_{13} = \langle \rho \rangle,
&&\dim E = 1, && \Gam/\Gam' \cong \Z_2 \\
E \subseteq V^{(3)} & \Rightarrow &&\Gam' = \Gam_{9} = \langle \rho^2,\sig \rangle,
&&\dim E = 1, && \Gam/\Gam' \cong \Z_2 \\
E \subseteq V^{(4)} & \Rightarrow &&\Gam' = \Gam_{10}= \langle \rho^2,\tau \rangle,
&&\dim E = 1, && \Gam/\Gam' \cong \Z_2 \\
E \subseteq V^{(5)} & \Rightarrow &&\Gam' = \Gam_{19}= \langle \rho^3 \rangle,
&&\dim E = 2, && \Gam/\Gam' \cong \D_3 \\
E \subseteq V^{(6)} & \Rightarrow &&\Gam' = \Gam_{22}= \langle 1 \rangle,
&&\dim E = 2, && \Gam/\Gam' \cong \D_6 . \\
\end{aligned}
$$
The first case, $E \subseteq V^{(1)} = \fix(\Gam_1, L^2(\Omega))$,
does not lead to a symmetry-breaking bifurcation.
The $\D_6$ action on $E$ is trivial, so the EBL does not apply.
The degenerate critical point $(u^*, \lam^*)$ is typically a fold
point (or saddle-node), not a bifurcation point.
In the neighborhood of the fold point there is only one solution branch,
with isotropy subgroup $\Gam_1$, and the branch lies to one side of $\lam = \lam^*$ or the other.

The next three cases, with $\Gam/\Gam' \cong \Z_2$ symmetry, are called {\em pitchfork bifurcations}.
Clearly, the only maximal isotropy subgroup is $\Gam'$ in each case, and the EBL applies.
The effective symmetry group acting on $E$ is $\Z_2$,
so there are two conjugate solution branches created at the bifurcation.
In the branch following code we follow
one of these branches using the pmGNGA
starting with any eigenvector $e \in E$.

The next case, with $E \subseteq V^{(5)}$, is a bifurcation with $\D_3$ symmetry.
The maximal isotropy subgroup $\Gam_5 = \langle \sig, \tau \rangle$ satisfies
$$
\dim \fix(\Gam_5, E) = 1, \mbox{ and } N_{\Gam_1}(\Gam_5)/\Gamma_5 = \langle 1 \rangle .
$$
Our branch following code uses a projection operator
to find an eigenvector $e \in E$ with $\stab(e, \Gam_1)= \Gam_5$.
The pmGNGA using this eigenvector $e$ will follow one of
the solution branches created at the bifurcation, and the
pmGNGA using the negative eigenvector $-e$ will find a branch that is
not conjugate to the first.  Bifurcations with $\D_3$ symmetry are
typically transcritical, and two $\D_3$-orbits of branches are created
at the bifurcation \cite{vol2, tsp}.

The last case, with $E \subseteq V^{(6)}$, is a bifurcation with $\D_6$ symmetry.
There are two maximal symmetry types, the conjugacy classes of
$\Gam_{15}$ and $\Gam_{16}$.  A calculation shows that
$$
\dim \fix(\Gam_{15}, E) = \dim \fix(\Gam_{16}, E) = 1, \mbox{ and }
N_{\Gam_1}(\Gam_{15})/\Gamma_{15} = N_{\Gam_1}(\Gam_{16})/\Gamma_{16} = \Z_2 .
$$
To follow one branch from each of the group orbits of solution branches created at this bifurcation,
it suffices to use the pmGNGA twice, with the eigenvectors $e_1, e_2 \in E$,
where $\stab(e_1, \Gam_1) = \Gam_{15}$ and $\stab(e_2, \Gam_1) = \Gam_{16}$.
It is well-known that these EBL-branches are typically the only branches created
at a bifurcation with $\D_6$ symmetry \cite{vol2, tsp}.
\end{example}
\begin{example}
\label{G13bifs}
\begin{figure} 
\scalebox{1}{
\xymatrix@C=10pt{
\langle \rho \rangle
&
\langle \rho \rangle \ar[d]
&
\langle \rho \rangle \ar[d]
&
\langle \rho \rangle \ar[d]
&
\\
&
\langle \rho^2 \rangle
&
\langle \rho^3 \rangle
&
\langle 1 \rangle
&
\\
}
}
\caption{
\label{bifG13}
The diagrams of the four isotropy lattices for the actions of $\Gam_{13} = \langle \rho \rangle$
on each of the four isotypic components $V^{(j)}_{\langle \rho \rangle}$ of the
$\Gam_{13}$ action on $L^2(\Om)$.
This describes the four possibilities (barring accidental degeneracy) for
the $\Gam_{13}$ action on the center eigenspace $E$ at a degenerate critical point.
}
\end{figure}
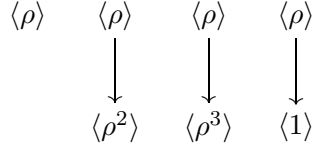
Consider a degenerate critical point with isotropy subgroup $\Gamma_{13} =\langle \rho \rangle \cong \Z_6$.
Barring accidental degeneracy, the center eigenspace $E$ is a subspace of one of the 4 isotypic components
$V^{(j)}_{\langle \rho \rangle}$ defined in Example \ref{isotypicZ6}.
Figure \ref{bifG13}
shows the lattice of isotropy subgroups for $\Gamma_{13}$ acting on each of these
4 isotypic components.
Recall that $\Gam = \Gam_{13} =\langle \rho \rangle$ for each of these cases,
and the minimal isotropy subgroup is $\Gam'=\pstab(E, \Gam)$.
We shall go through each of the four cases, and describe the resulting bifurcation:
$$
\begin{array}{lllll}
\vphantom{A_{B_\frac{C}{D}}}
E \subseteq V^{(1)}_{\langle \rho \rangle} = V^{(1)} \oplus V^{(2)} & \Rightarrow
&\Gamma' = \Gamma_{13} = \langle \rho \rangle, &\dim E = 1, &\Gam/{\Gamma}' \cong \langle 1 \rangle \\
\vphantom{A_{B_\frac{C}{D}}}
E \subseteq V^{(2)}_{\langle \rho \rangle} = V^{(3)} \oplus V^{(4)} & \Rightarrow
&\Gamma' = \Gam_{21} = \langle \rho^2 \rangle,
&\dim E = 1, & \Gam/\Gam' \cong \Z_2 \\
\vphantom{A_{B_\frac{C}{D}}}
E \subseteq V^{(3)}_{\langle \rho \rangle} = V^{(5)}
& \Rightarrow &\Gam' = \Gam_{19} = \langle \rho^3 \rangle,
&\dim E = 2, & \Gam/\Gam' \cong \Z_3 \\
\vphantom{A_{B_\frac{C}{D}}}
E \subseteq V^{(4)}_{\langle \rho \rangle} = V^{(6)}
& \Rightarrow &\Gam' = \Gam_{22}= \langle 1 \rangle,
&\dim E = 2, & \Gam/\Gam' \cong \Z_6 . \\
\end{array}
$$
The first two cases are analogous to the
first two cases in Example \ref{G1bifs}.
When $\Gam/\Gam' \cong \langle 1 \rangle$ there is a fold point, but no symmetry breaking bifurcation.
There is a pitchfork bifurcation when $\Gam/\Gam' \cong\Z_2$.
The next two cases are interesting because $\Gam_{13}$ does not act absolutely irreducibly
on $E$, and the EBL does not apply.  In both cases $\Gam'$ is a maximal isotropy subgroup.

In the third case, where $E \subseteq V^{(3)}_{\langle \rho \rangle}= V^{(5)}$,
every eigenfunction in the 2-dimensional $E$ has isotropy subgroup $\Gam_{19}$.
Since we have a gradient system, we know that solution branches with isotropy subgroup $\Gam_{19}$
are created at this bifurcation with $\Z_3$ symmetry.  The bifurcation is well-understood,
and it looks like a bifurcation
with $\D_3$ symmetry, except that the ``angle'' of the bifurcating solutions in the $E$ plane is arbitrary.
This means that trial and error is needed, in general, to find eigenfunctions in $E$
for which the pmGNGA will converge.
If a branch is found for a starting eigenfunction $e$, then the eigenfunction $-e$ is used to find the other
solution branch.

In the fourth case, where $E \subseteq V^{(4)}_{\langle \rho \rangle}= V^{(6)}$,
every eigenfunction in $E$ has the same isotropy subgroup: $\Gam_{22} = \langle 1 \rangle$.
Gradient bifurcations with $\Z_6$ symmetry look like bifurcations with $\D_6$ symmetry,
except that the angle in the $E$ plane is arbitrary.
Again, trial and error is needed to find starting eigenfunctions for which the pmGNGA converges.
\end{example}

\subsection{The Bifurcation Digraph}
A calculation
similar to those summarized in Examples \ref{G1bifs} and \ref{G13bifs} was done for each of the isotropy subgroups
of the $\DZ$ action on $L^2(\Om)$.
The calculations were done by hand, and verified with GAP.
There are 59 generic symmetry-breaking bifurcations, one for each
isotypic component $V^{(j)}_{\Gam_i}$ on which $\Gam_i$ acts
nontrivially.
The amount of information is overwhelming, so we display the essential results
in what we call a bifurcation digraph.
\begin{definition}
\label{digraphdef}
The {\em bifurcation digraph} of the $\Gamz$ action on a real vector space $V$
is a directed graph with labelled arrows. The vertices are the symmetry types
(equivalence classes of isotropy subgroups).
Given $\Sig \leq \Gam$, two isotropy subgroups of the $\Gamz$ action on $V$,
we draw an arrow from $[\Gam]$ to $[\Sig]$
iff $\Sig$ is a maximal isotropy subgroup of the
$\Gam$ action on some isotypic component $V^{(j)}_{\Gam}$ of $V$.
Each arrow has the label $\Gam/\Gam'$, where $\Gam'$ is the kernel
of the $\Gam$ action on $V^{(j)}_\Gam$.
Furthermore, each arrow is either
solid, dashed or dotted. The arrow is
\begin{align}
\mbox{solid if }
& \dim \fix(\Sigma, E) = 1 \mbox{ and } N_{\Gam}(\Sigma)/\Sigma = \Z_2, \nonumber \\
\mbox{dashed if }
&\dim \fix(\Sigma, E) = 1 \mbox{ and } N_{\Gam}(\Sigma)/\Sigma = \langle 1 \rangle, \mbox{ and}\nonumber\\
\mbox{dotted if }
&\dim \fix(\Sigma, E) \geq 2 \nonumber,
\end{align}
where $E$ is any irreducible subspace contained in $V^{(j)}_\Gam$.
\end{definition}
Note that if $\dim \fix(\Sigma, E) = 1$, then $ N_{\Gamma}(\Sigma)/\Sigma$ is either
$\Z_2$ or $\langle 1 \rangle$, since these are the only linear group actions on $E \cong \R^1$.
Thus, the three arrow types (solid, dashed, and dotted) exhaust all possibilities.

\begin{thm}
\label{thm1}
For a given $\Gamz$ action on $V$, every arrow in the diagram of the isotropy lattice
is an arrow in the bifurcation digraph.
\end{thm}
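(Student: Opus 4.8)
The plan is to unpack the definition of the two objects being compared and show the arrows of one are a subset of the arrows of the other. Recall that an arrow $[\Gam_i] \leftarrow [\Gam_j]$ in the diagram of the isotropy lattice means $\Gam_i \lneq \Gam_j$ (up to conjugacy) with no symmetry type strictly between them, i.e. $[\Gam_i]$ is a maximal proper symmetry type below $[\Gam_j]$. An arrow $[\Gam] \to [\Sig]$ in the bifurcation digraph means $\Sig$ is a maximal isotropy subgroup of the $\Gam$ action on some isotypic component $V^{(j)}_{\Gam}$. So the content of the theorem is: \emph{if $\Sig$ is a maximal proper isotropy subgroup of the $\Gamz$ action on $V$ sitting below $\Gam$, then $\Sig$ is a maximal isotropy subgroup of the $\Gam$ action on one of the isotypic components $V^{(j)}_{\Gam}$.}

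First I would fix representatives so that $\Sig \leq \Gam$, and recall the isotypic decomposition $V = \bigoplus_j V^{(j)}_{\Gam}$ from equation (\ref{isotypic}), which is a decomposition into $\Gam$-invariant (indeed $N_{\Gamz}(\Gam)$-relevant, but here just $\Gam$-) subspaces. The key observation is the fixed-point subspace decomposes compatibly: $\fix(\Sig, V) = \bigoplus_j \fix(\Sig, V^{(j)}_{\Gam})$, because the isotypic projections $P^{(j)}_\Gam$ commute with the $\Sig$-action (as $\Sig \leq \Gam$). Since $\Sig \neq \Gam$, we have $\fix(\Sig,V) \supsetneq \fix(\Gam,V) = V^{(1)}_{\Gam}$; hence there is some $j \neq 1$ with $\fix(\Sig, V^{(j)}_{\Gam}) \neq \{0\}$. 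Pick a nonzero $v$ in that summand; then $\Sig \leq \stab(v, \Gam)$, and since the $\Gamz$-stabilizer of $v$ contains this, the symmetry type $[\stab(v,\Gamz)]$ lies between $[\Sig]$ and $[\Gam]$ — actually between $[\Sig]$ and $[\Gam]$ in the $\Gam$-lattice. By maximality of $[\Sig]$ we must have $\stab(v,\Gam) = \Sig$ (it cannot equal $\Gam$ since $v \notin V^{(1)}_\Gam$). So $\Sig$ is actually realized as an isotropy subgroup \emph{inside} $V^{(j)}_\Gam$. It remains to see $\Sig$ is a \emph{maximal} isotropy subgroup of the $\Gam$-action on $V^{(j)}_\Gam$: if $\Sig \leq \Theta \lneq \Gam$ with $\Theta$ an isotropy subgroup of the $\Gam$-action on $V^{(j)}_\Gam$, then $\Theta$ is in particular an isotropy subgroup of the $\Gam$-action on $V$, hence of the $\Gamz$-action on $V$, giving a symmetry type between $[\Sig]$ and $[\Gam]$; maximality of $[\Sig]$ in the full lattice forces $\Theta = \Sig$.

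The main obstacle — really the only delicate point — is the bookkeeping with conjugacy classes versus subgroups: the lattice-of-isotropy-subgroups arrow is a statement about symmetry \emph{types}, while the digraph arrow is phrased for a fixed pair $\Sig \leq \Gam$. I would handle this by using the convention already set up in the paper (the representatives $\Gam_i$ are chosen with $\Gam_i \leq \Gam_j$ iff $S_i \leq S_j$, see Figure \ref{lattice}), so that "maximal proper symmetry type below $[\Gam]$" can be read off at the level of honest subgroups; then the argument above goes through verbatim. A second point to be careful about is that maximality "in the full $\Gamz$-lattice" must be transferred to maximality "in the $\Gam$-lattice on $V^{(j)}_\Gam$": this is exactly the last step above and uses only that every isotropy subgroup of a restricted action on an invariant subspace is still an isotropy subgroup of the ambient action, since the witnessing vector lives in the ambient space. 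Everything else is routine.
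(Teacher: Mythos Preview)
Your proposal is correct and follows essentially the same route as the paper's proof. The only cosmetic difference is that the paper picks a single witness $u^*$ with $\stab(u^*,\Gam)=\Sig^*$ and decomposes \emph{that element} along the isotypic components (observing each piece is $\Sig^*$-fixed), whereas you decompose the entire subspace $\fix(\Sig,V)$ and then pick a vector; both arguments then invoke maximality of $\Sig$ in exactly the same way to pin down the stabilizer of a component, and your final paragraph on transferring maximality to $V^{(j)}_\Gam$ makes explicit a step the paper leaves implicit.
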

\begin{proof}
Suppose $[ \Gam ] \rightarrow [ \Sig ]$ is an arrow in the diagram of the isotropy lattice.
Then some $\Sig^* \in [\Sig]$ is a maximal isotropy subgroup of the $\Gam$ action on $V$.
Choose $u^* \in V$ such that $\stab(u^*, \Gam) = \Sig^*$.  Such a $u^*$ exists
since $\Sig^*$ is an isotropy subgroup.
Now consider the isotypic decomposition $\{V^{(j)}_\Gam \}_{j \in J}$ of $V$.
We can write $u^* = \sum_{j \in J} u^{(j)}$, where $u^{(j)} \in V^{(j)}_\Gam$ are
uniquely determined.  Let $\gam$ be any element of $\Sig^*$.
Then $\gam \cdot u^* = \sum_{j \in J} \gam \cdot u^{(j)} = u^*$.
Since each of the components $V^{(j)}_\Gam$ is $\Gam$-invariant,
$\gam \cdot u^{(j) } = u^{(j)}$ for each $j \in J$.
Thus $\Sig^* \leq \stab(u^{(j)}, \Gam)$ for each $j \in J$.
Either $\stab(u^{(j)}, \Gam) = \Gam$ or $\stab(u^{(j)}, \Gam) = \Sig^*$,
since $\Sig^*$ is a maximal isotropy subgroup of the $\Gam$ action on $V$.
If $\stab(u^{(j)}, \Gam) = \Gam$ for all $j \in J$, then $\stab(u^*, \Gam) = \Gam$.
But $\stab(u^*, \Gam) \neq \Gam$, so $\stab(u^{(j)}, \Gam) = \Sig^*$
for some $j \in J$, and $\Sig^*$ is a maximal isotropy subgroup of the $\Gam$
action on this component $V^{(j)}_\Gam$ of $V$.
Therefore the bifurcation digraph has an
arrow from $[ \Gam]$ to $[ \Sig^* ] = [\Sig]$.
\end{proof}
Theorem \ref{thm1} says that the bifurcation digraph is an extension of
the diagram of the isotropy lattice.
The bifurcation digraph has more arrows, in general.
As with the lattice of isotropy subgroups, we usually draw a single element
$\Gam$ of the equivalence class $[\Gam]$ for each vertex of the bifurcation digraph.

An arrow from $\Gam$ to $\Sig$ in the bifurcation digraph
indicates that a $\Gamz$-equivariant gradient system $g(\lam, u) = 0$
can have a generic symmetry-breaking bifurcation
where a mother branch with isotropy subgroup $\Gam$ creates a daughter
branch with isotropy subgroup $\Sig$.
The symmetry group of the bifurcation is $\Gam/\Gam'$, and the center eigenspace at the bifurcation
point is the $\Gam$-irreducible space $E$.  The information encoded in the label and arrow type
is used by the heuristics of our branch-following algorithm.
A solid arrow indicates that every $e$ in the one-dimensional space $\fix(\Sigma, E)$
satisfies $\gamma \cdot e = - e$ for some $\gam \in \Sigma$.
Thus, there is typically a pitchfork bifurcation in the space $\fix(\Sigma, E)$.
A dashed arrow indicates that $\gamma \cdot e = e$ for all $e \in \fix(\Sigma, E)$ and $\gamma \in \Sig$.
Thus, the daughter branches bifurcating in the directions $e$ and $-e$ are not conjugate.
A dotted arrow indicates that the EBL does not apply to this
bifurcation.  As mentioned above, branching of solutions corresponding to a dotted arrow
is generic in gradient systems \cite{smoller, vol2}.

A condensed bifurcation digraph for the $\DZ$ action on $L^2(\Omega)$ is shown in Figure \ref{digraph}.
The calculations for the directed edges coming from $\Gamma_1$ and $\Gamma_{13}$
are described in examples \ref{G1bifs} ane \ref{G13bifs}, respectively.
The digraph has 65 directed edges, but there are only
5 possibilities for the symmetry group of the bifurcation:
$\Gamma/\Gamma' = \Z_2$, $\Z_3$, $\Z_6$, $\D_3$, or $\D_6$.
The symmetry-breaking bifurcation with each of these symmetries is well understood \cite{vol2, tsp},
and each is described briefly in Example \ref{G1bifs} or \ref{G13bifs}.
This digraph is of great help in writing an automated code for branch following.

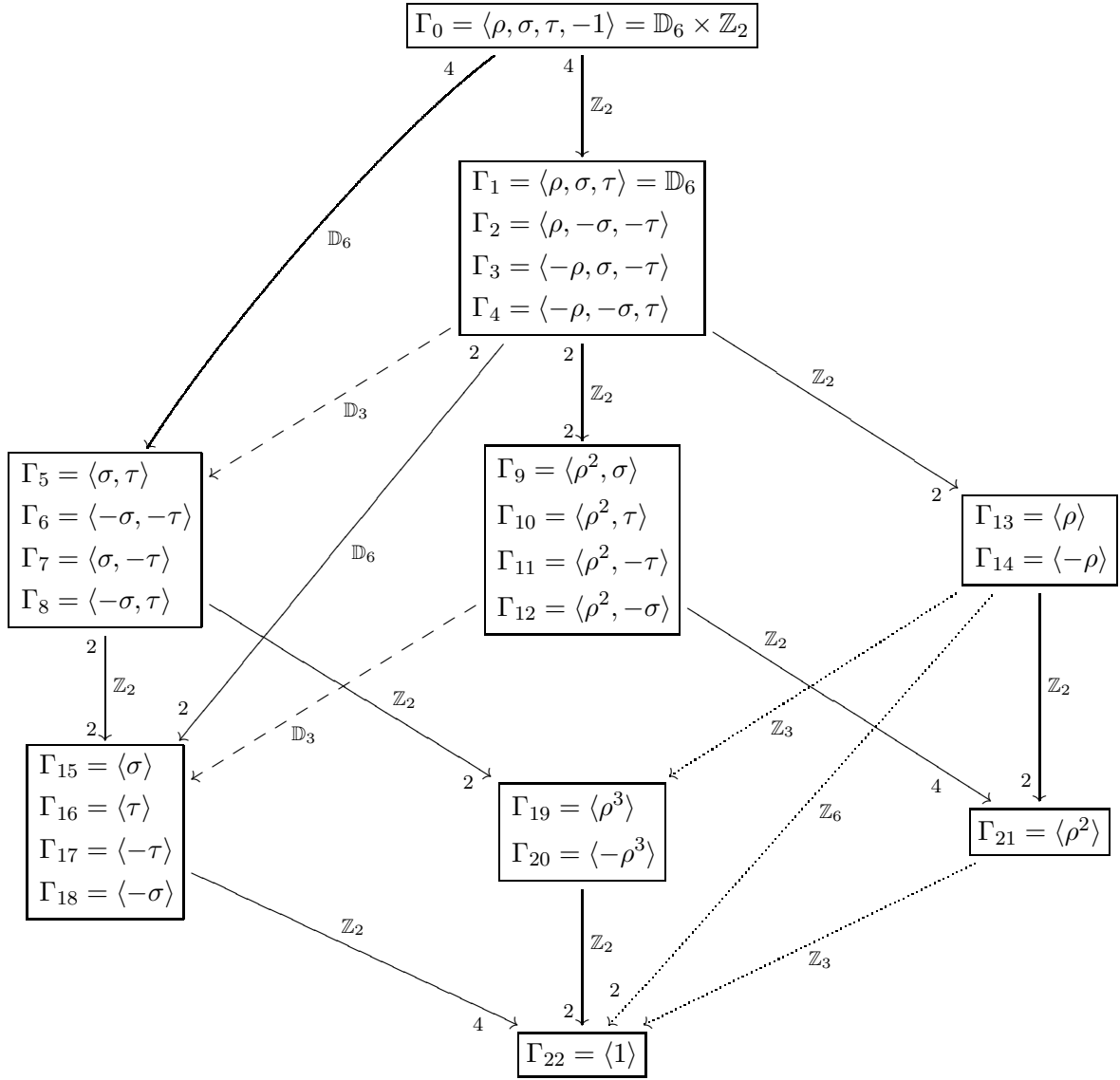
\begin{figure}
\scalebox{1}{
\xymatrix@R=40pt@C=75pt{
 &\fbox{$ \Gam_0 = \langle \rho, \sig, \tau, -1 \rangle = \DZ
$}
  \ar@{->}_<>(.1)4[d]^<>(.5){\Z_2}
  \ar@(ul,u)@{->}_<>(.1)4[ddl]^{\D_6}
\\
&
   \fbox{$
   \begin{aligned}
   &\Gam_1  = \langle\rho,\sigma,\tau\rangle = \D_6 \\ 
   &\Gam_2  = \langle\rho,-\sigma,-\tau\rangle \\ 
   &\Gam_3  = \langle-\rho,\sigma,-\tau\rangle \\ 
   &\Gam_4  = \langle-\rho,-\sigma,\tau\rangle\\ 
   \end{aligned}
   $}
    \ar@{->}_<>(.1)2_<>(.9)2[d]^{\Z_2}
    \ar@{->}_<>(.95)2[rd]^{\Z_2}
    \ar@{-->}[ld]^{\D_3}
    \ar@{->}_<>(.05)2_<>(.95)2[ldd]^{\D_6}
\\
   \fbox{$
   \begin{aligned}
   &\Gam_5  = \langle\sigma,\tau\rangle   \\ 
   &\Gam_6  = \langle-\sigma,-\tau\rangle \\ 
   &\Gam_7  = \langle\sigma,-\tau\rangle  \\ 
   &\Gam_8  = \langle-\sigma,\tau\rangle  \\ 
   \end{aligned}
   $}
   \ar@{->}_<>(.1)2_<>(.9)2[d]^{\Z_2}
   \ar@{->}_<>(.95)2[rd]^<>(.65){\Z_2}
 &
   \fbox{$
   \begin{aligned}
   &\Gam_9     = \langle\rho^2,\sigma\rangle  \\ 
   &\Gam_{10}  = \langle\rho^2,\tau\rangle    \\ 
   &\Gam_{11}  = \langle\rho^2,-\tau\rangle   \\ 
   &\Gam_{12}  = \langle\rho^2,-\sigma\rangle \\ 
   \end{aligned}
   $}
    \ar@{->}_<>(.85)4[rd]^<>(.25){\Z_2}
    \ar@{-->}[ld]^<>(.65){\D_3}
&
   \fbox{$
   \begin{aligned}
  &\Gam_{13}  = \langle\rho\rangle     \\ 
  &\Gam_{14}  = \langle-\rho\rangle    \\ 
    \end{aligned}
    $}
    \ar@{->}_<>(.9)2[d]^{\Z_2}
    \ar@{.>}[ld]^<>(.65){\Z_3}
    \ar@{.>}_<>(.95)2[ldd]^{\Z_6}
\\
 \fbox{$
    \begin{aligned}
   &\Gam_{15}  = \langle\sigma\rangle  \\ 
   &\Gam_{16}  = \langle\tau\rangle    \\ 
   &\Gam_{17}  = \langle-\tau\rangle   \\ 
   &\Gam_{18}  = \langle-\sigma\rangle \\ 
   \end{aligned}
 $}
    \ar@{->}_<>(.9)4[rd]^{\Z_2}
&
   \fbox{$
   \begin{aligned}
   &\Gam_{19}  = \langle\rho^3\rangle  \\ 
   &\Gam_{20}  = \langle-\rho^3\rangle \\ 
   \end{aligned}
   $}
    \ar@{->}_<>(.9)2[d]^{\Z_2}
&
   \fbox{$
 \Gam_{21} = \langle\rho^2\rangle 
   $}
    \ar@{.>}[ld]^{\Z_3}
\\
&
 \fbox{$
 \Gam_{22}= \langle 1 \rangle 
 $}
} }
\caption{The bifurcation digraph for the $\DZ$ action on $L^2(\Om)$
extends the diagram of the isotropy lattice.
The digraph shown is condensed as in
Figure~\ref{lattice}.
The arrows indicate generic symmetry breaking bifurcations.
The Morse index of the mother branch
changes by 1 at bifurcations with $\Z_2$ symmetry, and it changes by 2 at
all other bifurcations shown here.
}
\label{digraph}
\end{figure}
In our problem
the label $\Gam/\Gam'$ and arrow type are sufficient to characterize the bifurcation completely.
For more complicated groups, the label may need to contain more information about the
action of $\Gam$ on $E$.  For example the label $\Gam/\Gam' = {\mathbb S}_4$ would be
ambiguous, since ${\mathbb S}_4$ has two faithful irreducible representations
with different lattices of isotropy subgroups.
\end{section}

\begin{section}{Symmetry and Computational Efficiency.}
\label{sce_section}

Several modifications of the GNGA (\ref{algorithm}) take advantage
of symmetry to speed up the calculations.
The symmetry forces many of the components of the gradient and Hessian to be zero.
We identified these zero components and avoided doing the time-consuming numerical integrations
to compute them.
At the start of the C++ program, the isotropy subgroup, $\Gam_i$, of the
initial guess is computed.  Recall that there are $M$ modes in the Galerkin space $B_M$,
so $\dim (B_M) = M$.
Define $M_i := \dim( \fix(\Gam_i, B_M))$.
We chose the representatives $\Gam_i$ within each conjugacy class so that
$\fix(\Gam_i, B_M)$ is a coordinate subspace of $B_M$.
Thus, $M-M_i$ components of the gradient $g(\lam,u)$ are zero if $\fix(u) = \Gam_i$.
The numerical integrations in (\ref{JprimeDef}) are done only for the $M_i$ potentially nonzero components of $g$.
Similarly, $M_i(M_i+1)/2$ rather than $M(M+1)/2$ numerical integrations are needed to compute
the part of the Hessian matrix $h$ needed by the GNGA algorithm:
The numerical integrations in (\ref{JprimePrimeDef}) are
done only if $\psi_j$ and $\psi_k$ are both in
$\fix(\Gam_i, B_M)$.
The system $h \chi = g$ for the Newton step $\chi$ reduces to a system of $M_i$ equations
in $M_i$ unknowns.
After Newton's method converges to a solution, the full Hessian needs to be calculated
in order to compute the MI.  Here, too, we can take advantage of the symmetry:
Since $h$ is $\Gam_i\,$-equivariant,
$h_{j\,k} = 0$ if $\psi_j$ and $\psi_k$
are in different isotypic components $V^{(j)}_{\Gam_i}$ of $B_M$.

As an example, consider the execution time for approximating a solution with
$\Gam_1$ symmetry using $M = 300$ modes and a level $\ell = 5$ grid on a 1GHz PC.
Our C++ code uses only $M_1 = 30$ modes, and takes about 1.5 seconds per Newton step,
compared to 44 seconds when the symmetry speedup is not implemented.
\end{section}


\begin{section}{Automated Branch Following.}
\label{branch_section}
The branch following code is a complex collection of about a dozen Perl scripts,
{\it Mathematica} and Gnuplot scripts, and a C++ program.
These programs write and call each other fully automatically and communicate through output
files, pipes and command line arguments.  A complete bifurcation
diagram can be produced by a single call to the main Perl script.

\begin{figure}
\begin{center}
\rotatebox{-90}{\scalebox{.93}{\includegraphics{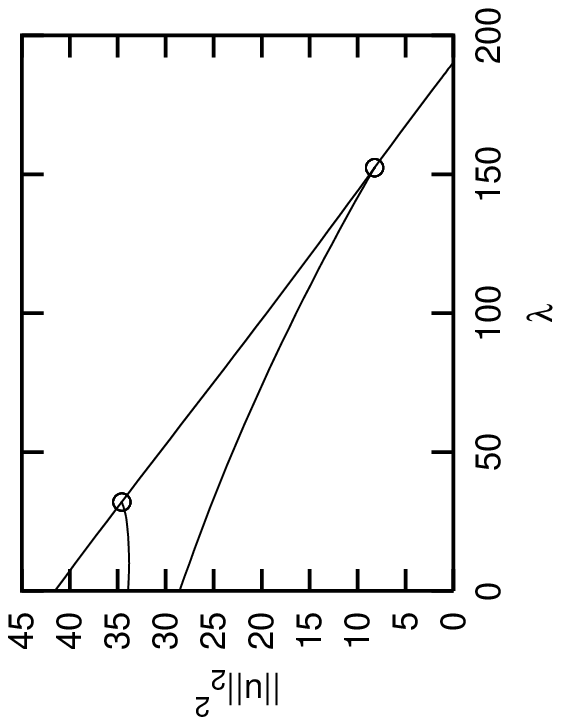}}}
\rotatebox{-90}{\scalebox{.93}{\includegraphics{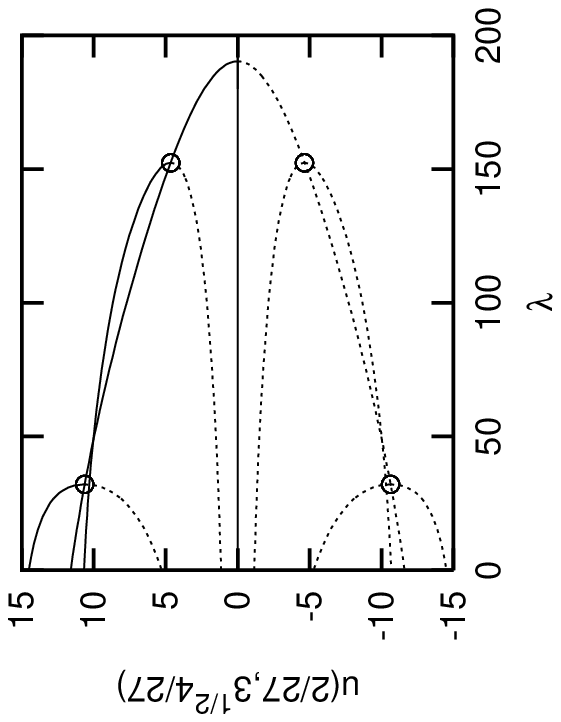}}}
\caption{
Bifurcation diagrams of the sixth primary branch (which bifurcates from $\lam_6$), showing
$||u||_2^2$ and $u(2/27, 4 \sqrt{3}/27)$ as a function of $\lam$.
Since $||u||_2^2$ is a $\D_6 \times \Z_2$-invariant function of $u$, each
group orbit of solution branches is shown as one curve on the left.  The disadvantage
of plotting $||u||_2^2$  is that the curves in many bifurcation diagrams are not well separated.
The point $(2/27, 4 \sqrt{3}/27)$ is
not on any of the reflection axes of the snowflake region.
There are 2 primary
branches with symmetry $S_1$, four secondary branches with
symmetry $S_9$, and four secondary branches with symmetry
$S_{10}$.  Our choice for the bifurcation diagrams in this paper combines the
advantages of both views: $u(2/27, 4 \sqrt{3}/27)$ is plotted as a function of $\lam$ for
exactly one branch (the solid lines) from each group orbit.
Unless indicated otherwise, all figures were produced with level $\ell = 5$ and $M = 300$ modes.
}
\label{fig.verbose}
\end{center}
\end{figure}

Two choices for the function of $u$ plotted vs. $\lam$ are shown
in Figure \ref{fig.verbose}.
In most bifurcation diagrams we plot approximate solutions
$u$ evaluated at a generic point $(2/27 , 4 \sqrt{3} / 27)$ (the big dot in Figure \ref{grid})
versus the parameter
$\lambda$;
other choices for the vertical axis such as $J(u)$ or $\|u\|_\infty$
lead to less visible separation of branches.
Two conjugate solutions can have different values at
the generic point, but since our program follows only one
branch in each group orbit this does not cause a problem.

The C++ program implements the GNGA algorithm. Its input
is a vector of coefficients $a \in \R^M$ for an initial guess in Newton's method,
an interval for $\lambda$, a stepsize for $\lambda$
and several other parameters such as the grid level.
It finds solutions on a
single branch of the bifurcation diagram. Every solution is written as a single line
in an output file. This line contains all the information about the solution,
and can be used to write an input file for a subsequent
call to the same C++ program.

The C++ program finds one branch (referred to as the main branch) and a short segment of
each of the daughter branches created at bifurcations of the main branch.
The coefficients approximating the first solution on the branch are supplied to
the C++ program.  Newton's method is used to find this first solution, then $\lam$
is incremented and the next solution is found.
The program attempts to follow the main branch all the way to
the final $\lam$, usually 0.
Heuristics are used to double or halve the $\lam$ stepsize when needed, keeping the stepsize
in the interval from the initial stepsize (input to the C++ program) to $1/32$ of the initial stepsize.
For example, the stepsize is halved
if Newton's method does not converge, if it converges to a solution with
the wrong symmetry, or if more than one bifurcation is detected in one $\lam$ step.

The Morse index is computed at each $\lam$ value on the main branch.
When the MI changes a subroutine is called to handle the bifurcation before the main
branch is continued.
If the MI changes from $m_1$ to $m_2$, we define $m = \max\{m_1, m_2\}$.
Then the bifurcation point is approximated
by using the secant method to set the $m$-th eigenvalue of the Hessian $h(u)$ to zero
as a function of $\lam$.  The GNGA is needed at each step of the secant method
to compute $u = u (\lam)$.
We find that the GNGA works well even though we are
approximating a solution for which the Hessian is singular.

After the bifurcation point is approximated, a short segment of each bifurcating branch is computed
and one output file is written for each branch, using Algorithm \ref{follow_branch}.
If the Equivariant Branching Lemma (EBL)
holds, then we know exactly which critical eigenvector to use for each branch.
\begin{algorithm}{(\verb"follow_branch")}
\label{follow_branch}
\begin{enumerate}
\tt
\item Input: bifurcation point $(\lam, a)$,
one
critical eigenvector $e \in \R^M$, \\
and stepsize $\Delta \lam < 0$.
Output: A file is written for one
daughter branch.
\item Write $(\lam, a)$ to output file.  Set $t = 0.1$. Set $\lam_b = \lam$.
\item Compute index $k$ so that $| e_k | \geq | e_i |$ for all $i \in \{ 1, \ldots, M \}$.
\item Repeat until $ \lam_b - \lam < \Delta \lam$, or $t < 0.1/32$ or some maximum number
of points have been written to the file.
        \begin{enumerate}
        \item Do the pmGNGA with initial guess $(\lam, a + t \, e)$,
            fixing coefficient $k$.
        \item If Newton's method converges replace $(\lam, a)$ by the solution found and write this point to the file,
               else $t \leftarrow t/2$.
        \end{enumerate}
\end{enumerate}
\end{algorithm}

\begin{figure}
\begin{center}
\rotatebox{-90}{\scalebox{.93}{\includegraphics{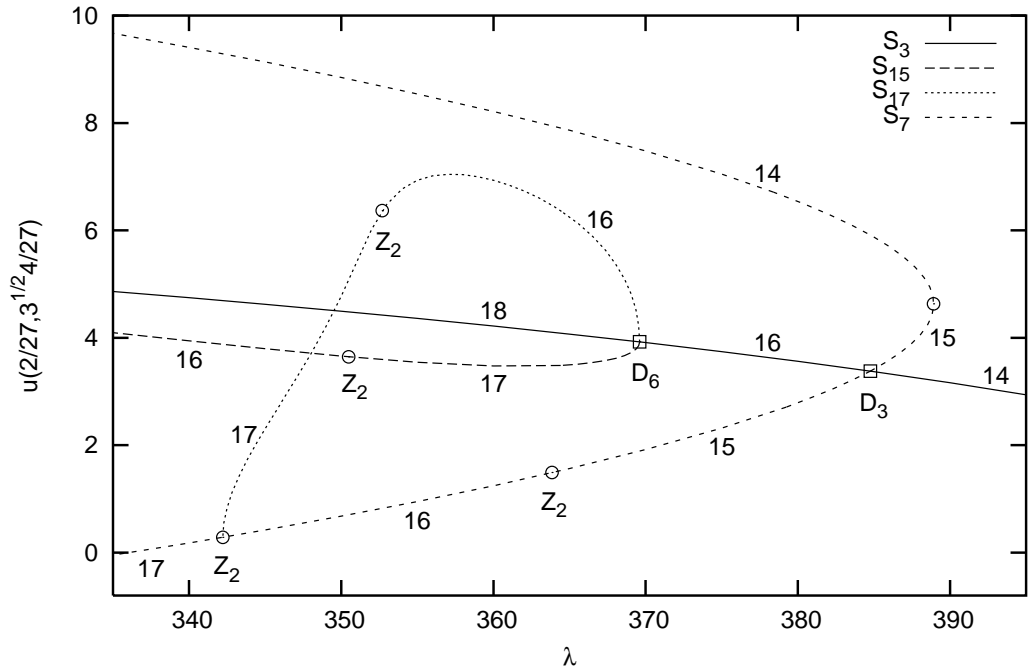}}}
\caption{A partial bifurcation diagram of the 14-th primary branch showing
a $\D_6$, a $\D_3$ and several $\Z_2$ bifurcations.
At the $\D_6$ bifurcation, 12 branches in two different group orbits
are born.
In accordance with Figure \ref{fig.verbose}, only two branches are followed and shown
on this bifurcation diagram.
An animation showing the followed branch with symmetry type $S_{15}$ is shown in {\tt s3s15.gif},
and an animation of the followed branch with symmetry type $S_{17}$ is in {\tt s3s17s7.gif}.
Note that this branch with $S_{17}$ symmetry ``dies'' at a bifurcation with $\Z_2$ symmetry,
showing that we cannot always make a consistent distinction between secondary and tertiary branches.
At the $\D_3$ bifurcation,
6 branches in two different group orbits are born.
As before, only two branches are followed.
An animation showing the ``upper'' branch with symmetry type $S_7$, through the bifurcation
point and continuing to the ``lower'' branch with symmetry type $S_7$ is shown in {\tt s7s3s7.gif}.
For clarity, the branches bifurcating from 3 of the $\Z_2$ bifurcations
are not shown.
The numbers next to a branch indicate the MI of the solution.
The MI changes by 2 at a square, and by 1 at a circle.
}
\label{fig.d3_d6}
\end{center}
\end{figure}

\begin{figure}
\begin{center}
\rotatebox{-90}{\scalebox{.93}{\includegraphics{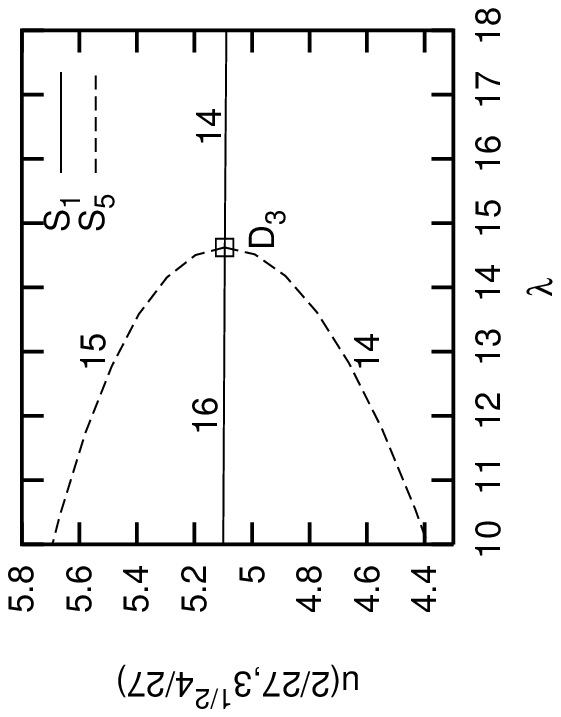}}}
\rotatebox{-90}{\scalebox{.93}{\includegraphics{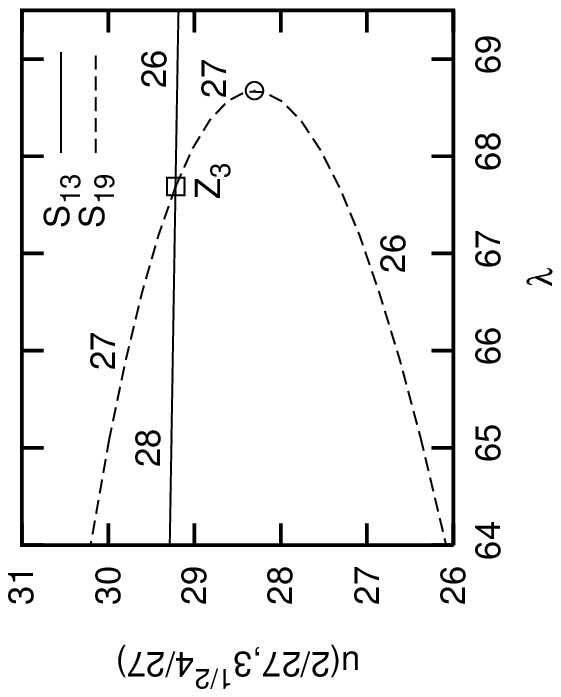}}}
\caption{
The $\D_3$ bifurcation of the 13-th primary branch is on the left. This is the only
observed $\D_3$ bifurcation that is not transcritical.
An animation of the upper branch with symmetry type $S_5$, through the bifurcation
point and continuing with the lower branch is shown in {\tt s5s1s5.gif}.
A $\Z_3$ bifurcation of a daughter of the 24-th primary branch is shown on the right.
The branches created at this bifurcation are not described by the EBL.
An animation of the branches with symmetry type $S_{19}$ is shown in {\tt s19s13s19.gif}.
}
\label{fig.d3b}
\end{center}
\end{figure}

Note that the pmGNGA can follow a branch that bifurcates
to the right or the left.  Those that bifurcate to the right usually turn over in a saddle-node
``bifurcation" that does not offer any difficulty for the pmGNGA.
Figures \ref{fig.d3_d6} and \ref{fig.d3b} show several examples of bifurcations.

The EBL does not hold at bifurcations with $\Z_3$ and $\Z_6$ symmetry in our problem, since
the 2-dimensional center eigenspace does not have a 1-dimensional subspace with more symmetry.
Figure \ref{fig.d3b} shows one of the few bifurcations with $\Z_3$ symmetry that we observed.
By good fortune, the branches with symmetry type $S_{19}$
were successfully followed using the same eigenvectors one would choose for a
bifurcation with $\D_3$ symmetry.
A better method for following bifurcating solutions that are
not predicted by the EBL would be to use the pmGNGA with random (normalized) eigenvectors in $E$ repeatedly
until it appears that all equivalence classes of solutions have been found.

The branch following code is called recursively by a main Perl script. Initially, the C++ program follows
the trivial branch on a given $\lambda$ range. This results in an output
file for the trivial branch and another output file for each bifurcating primary
branch. Then the short parts of the primary branches are followed with more calls
to the C++ program. Any bifurcating branch results in a new output file, and
the Perl script makes another call to the C++ program to continue that branch.
The main Perl script's most important job is book keeping. It saves the output files
with distinct names, and calls the branch following code to continue
each of the new branches. The process stops when all the branches are fully followed
within the given $\lam$ range.

In this way, a complete bifurcation diagram is produced by a single invocation
of the main Perl script.  There is no need to guess initial conditions for input
to Newton's method, since the trivial solution is known exactly ($a = 0$) and all
the other solutions are followed automatically.

The main Perl script calls several other smaller scripts. For example, there is a script
which extracts solutions from output files and feeds them to the branch following code as input.
Another script creates Gnuplot scripts on the fly to generate bifurcation diagrams.
Perl scripts are used to automatically number and store the output files and create
human readable reports about them.

\end{section}


\begin{section}{Numerical Results.}
\label{results_section}

\def\bifsize{.93}
\begin{figure}
\begin{center}
\rotatebox{-90}{\scalebox{\bifsize}{\includegraphics{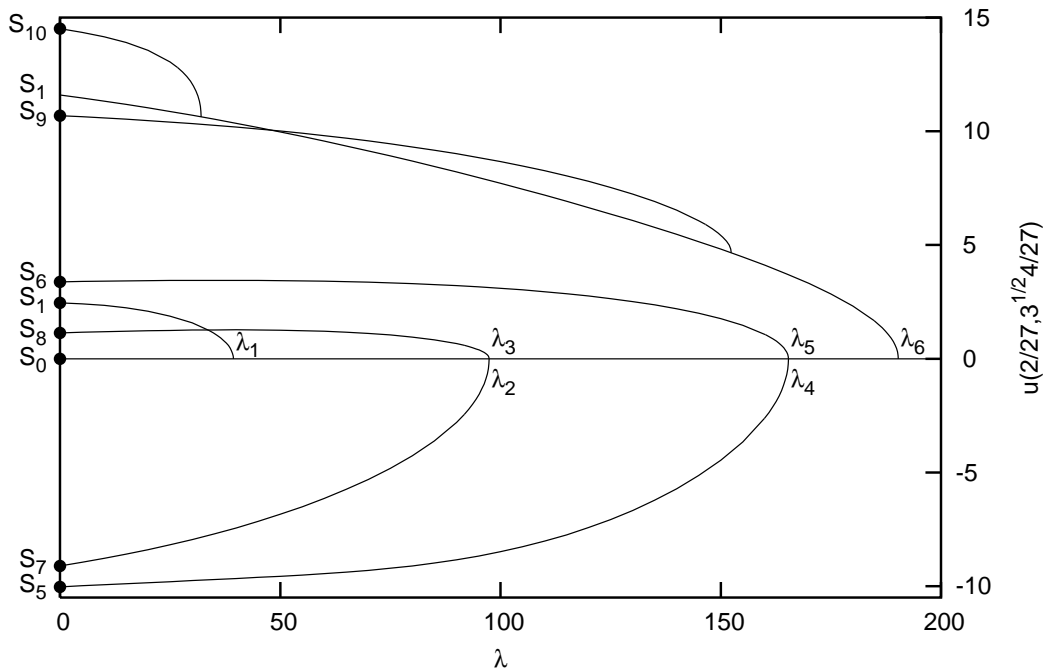}}}
\caption{The complete bifurcation diagram for the first six primary branches bifurcating
from the trivial branch.
The second branch, with symmetry $S_7$, contains the CCN solution.
The dots at $\lambda=0$ in Figures~\ref{bif1-6}--\ref{bifD} correspond to
solutions depicted in Figures~\ref{sols1}~and~\ref{sols2}.
We used the level 5 grid with 300 modes in creating all bifurcation diagrams.
In Figure~\ref{levelmode} convergence data for the solution of symmetry type $S_{10}$ at
$\lambda=0$ is provided.
}
\label{bif1-6}
\end{center}
\end{figure}

\begin{figure}
\begin{center}
\rotatebox{-90}{\scalebox{\bifsize}{\includegraphics{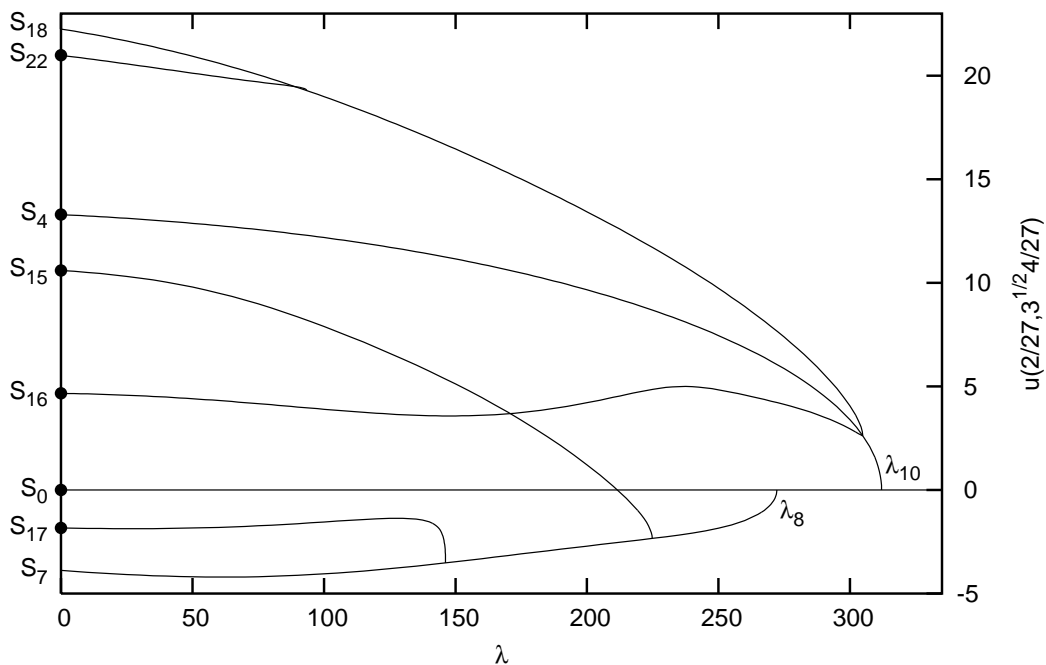}}}
\caption{
A partial bifurcation diagram showing some of the solutions bifurcating from
the 8-th and 10-th primary branches.
Again, the dots at $\lam = 0$ indicate solutions shown
in Figures~\ref{sols1}~and~\ref{sols2}.
The contour plots as a function of $\lam$ are animated for the branches
ending with the dots indicating symmetry types $S_{15}$ ({\tt s7s15.gif}),
$S_{17}$ ({\tt s7s17.gif}), $S_{16}$ ({\tt s4s16.gif}), and $S_{22}$ ({\tt s4s18s22.gif}).
}
\label{bifB}
\end{center}
\end{figure}

\begin{figure}
\begin{center}
\rotatebox{-90}{\scalebox{\bifsize}{\includegraphics{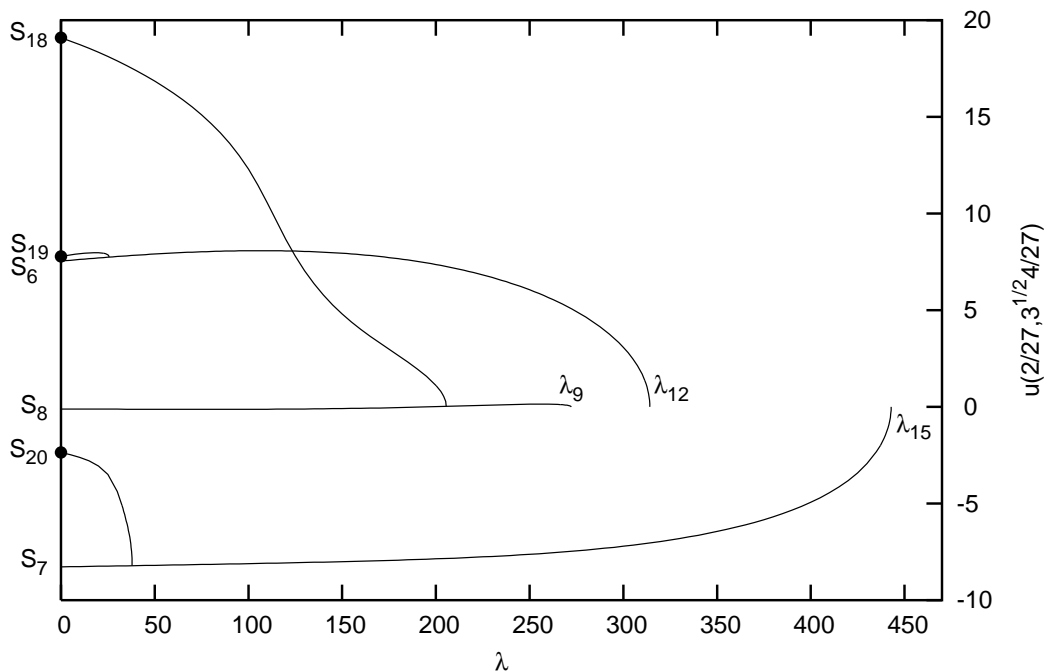}}}
\caption{
A partial bifurcation diagram providing three additional symmetry types.
For clarity, the trivial branch is not shown in this and the next figure.
}
\label{bifC}
\end{center}
\end{figure}

\begin{figure}
\begin{center}
\rotatebox{-90}{\scalebox{\bifsize}{\includegraphics{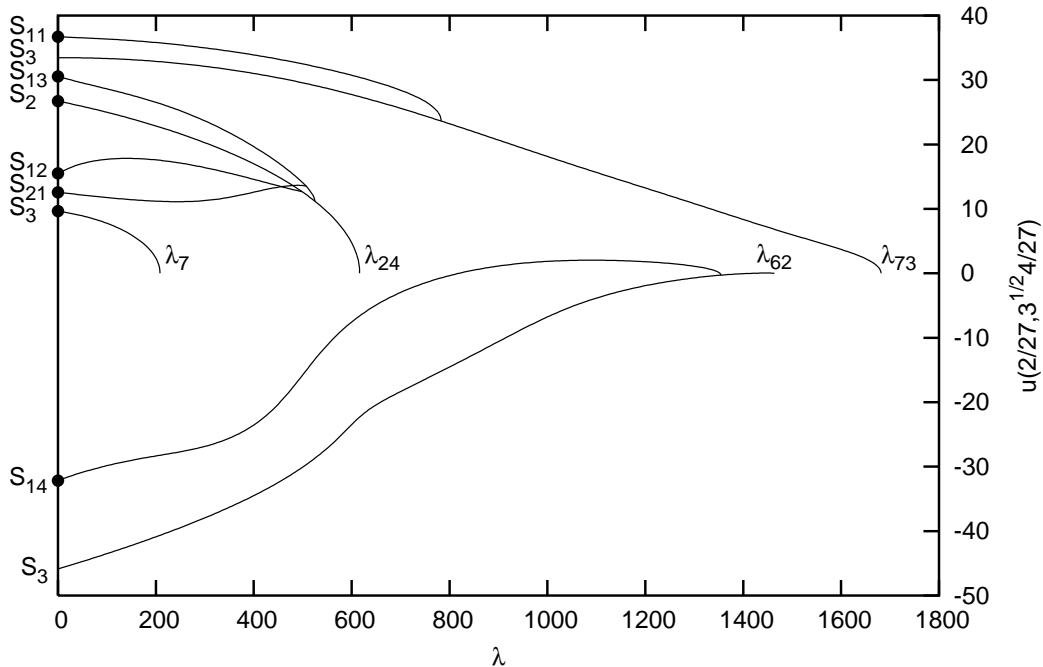}}}
\caption{
A partial bifurcation diagram containing solutions of the seven remaining symmetry types.
Primary branch 24 is the first branch with symmetry type $S_{2}$.
The symmetry types $S_{14}$ and $S_{11}$ were found by searching the first one hundred
primary branches, following only those branches which can lead to solutions with
the desired symmetry.  These two solutions are included for completeness, but
their existence for the PDE would have to be confirmed with more modes and
a higher level approximation of the eigenfunctions.
}
\label{bifD}
\end{center}
\end{figure}

\def\sbsize{0.3}
\def\figheight{4.5cm}

\begin{figure}
\begin{center}
\begin{tabular}{|c|c|c|}
\hline
\vphantom{{\rule{0cm}{\figheight}}}
   \scalebox{.38}{\includegraphics{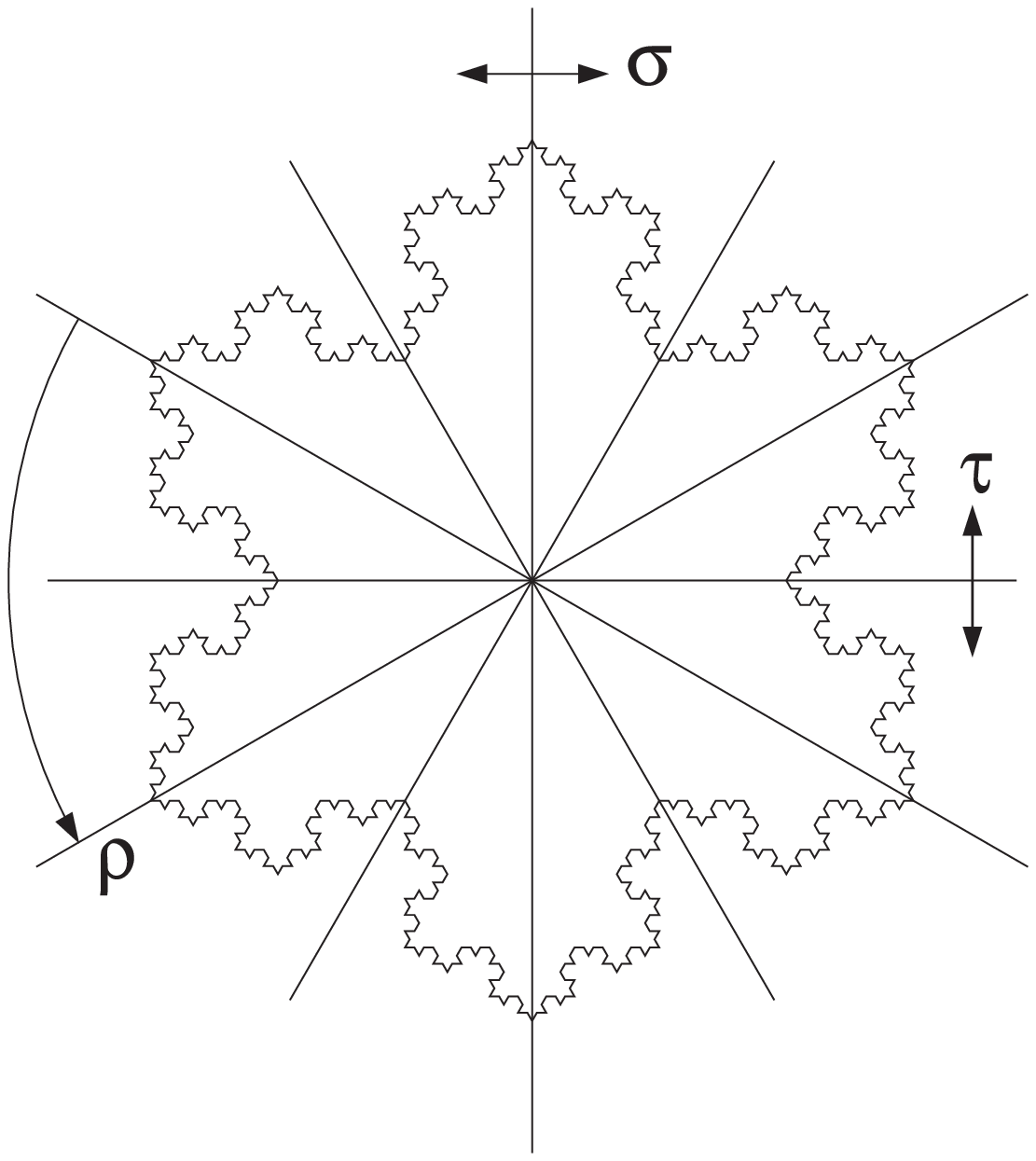}}
&
    \scalebox{\sbsize}{\includegraphics{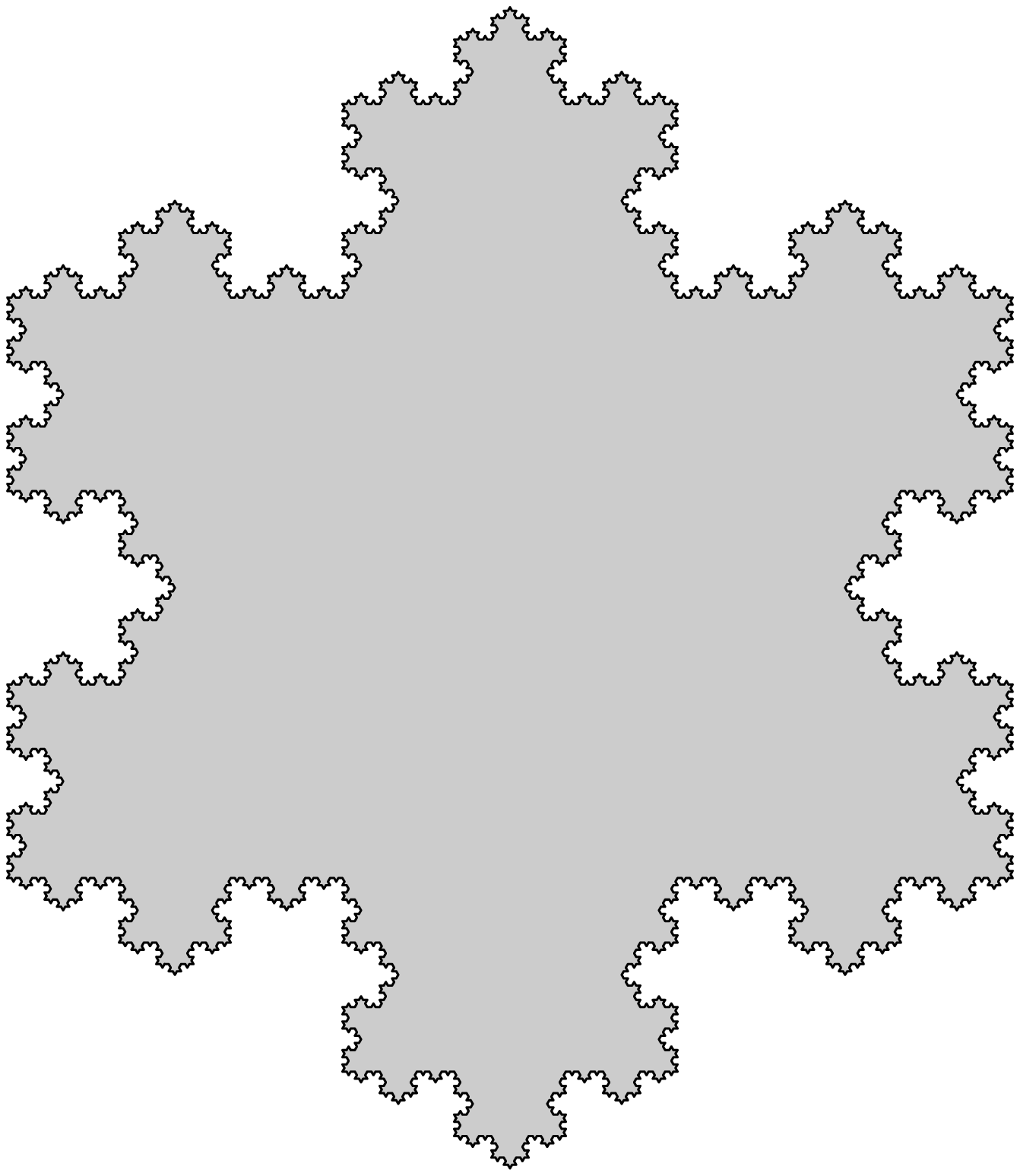}}
&
    \scalebox{\sbsize}{\includegraphics{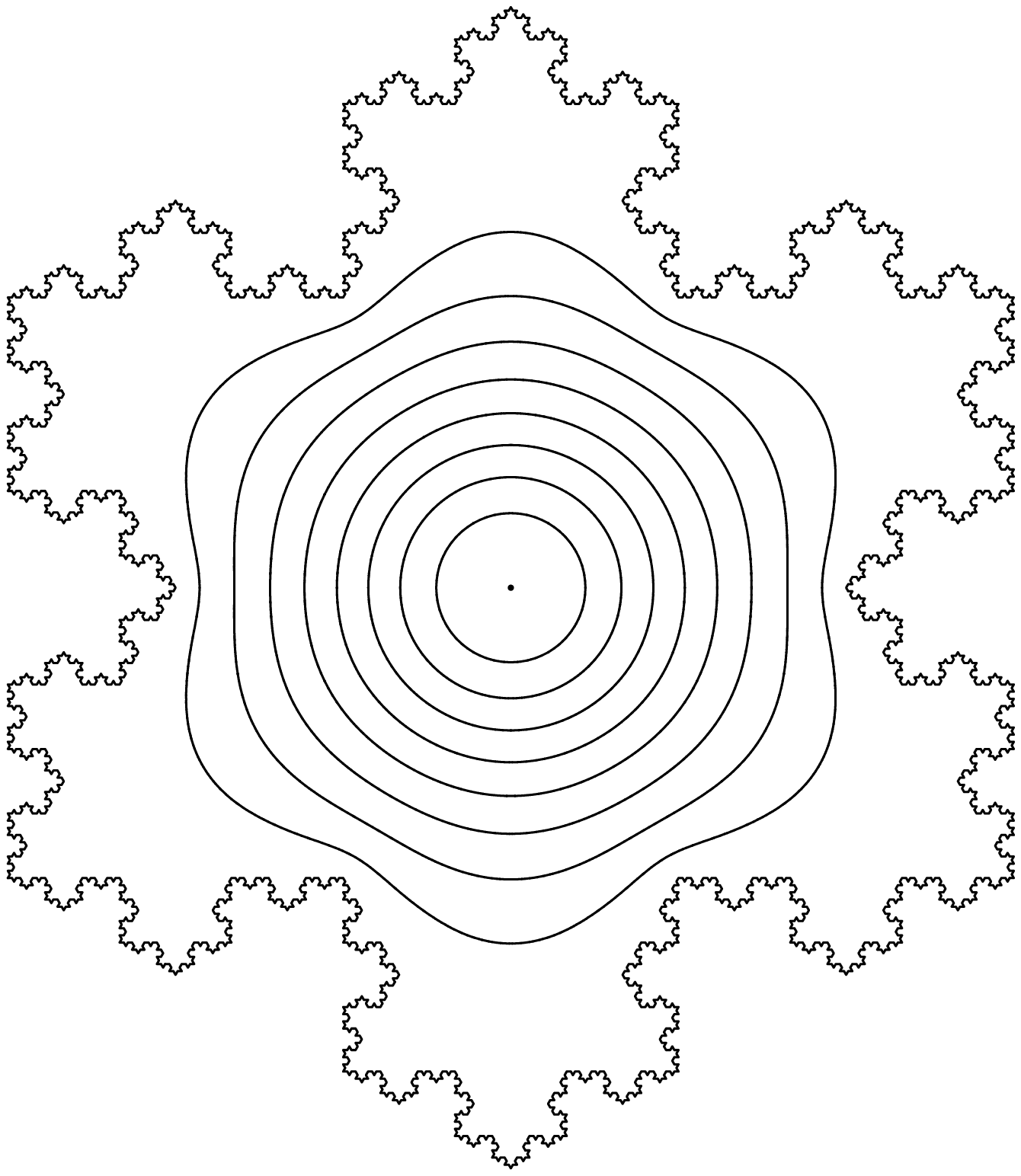}}
 \\
Action of $\rho$, $\sig$, and $\tau$.&
$\Gam_0 = \langle\rho,\sigma, \tau, -1 \rangle = \DZ$ &
$\Gam_1 = \langle\rho,\sigma, \tau \rangle = \D_6$
\\

\hline
\vphantom{{\rule{0cm}{\figheight}}}
    \scalebox{\sbsize}{\includegraphics{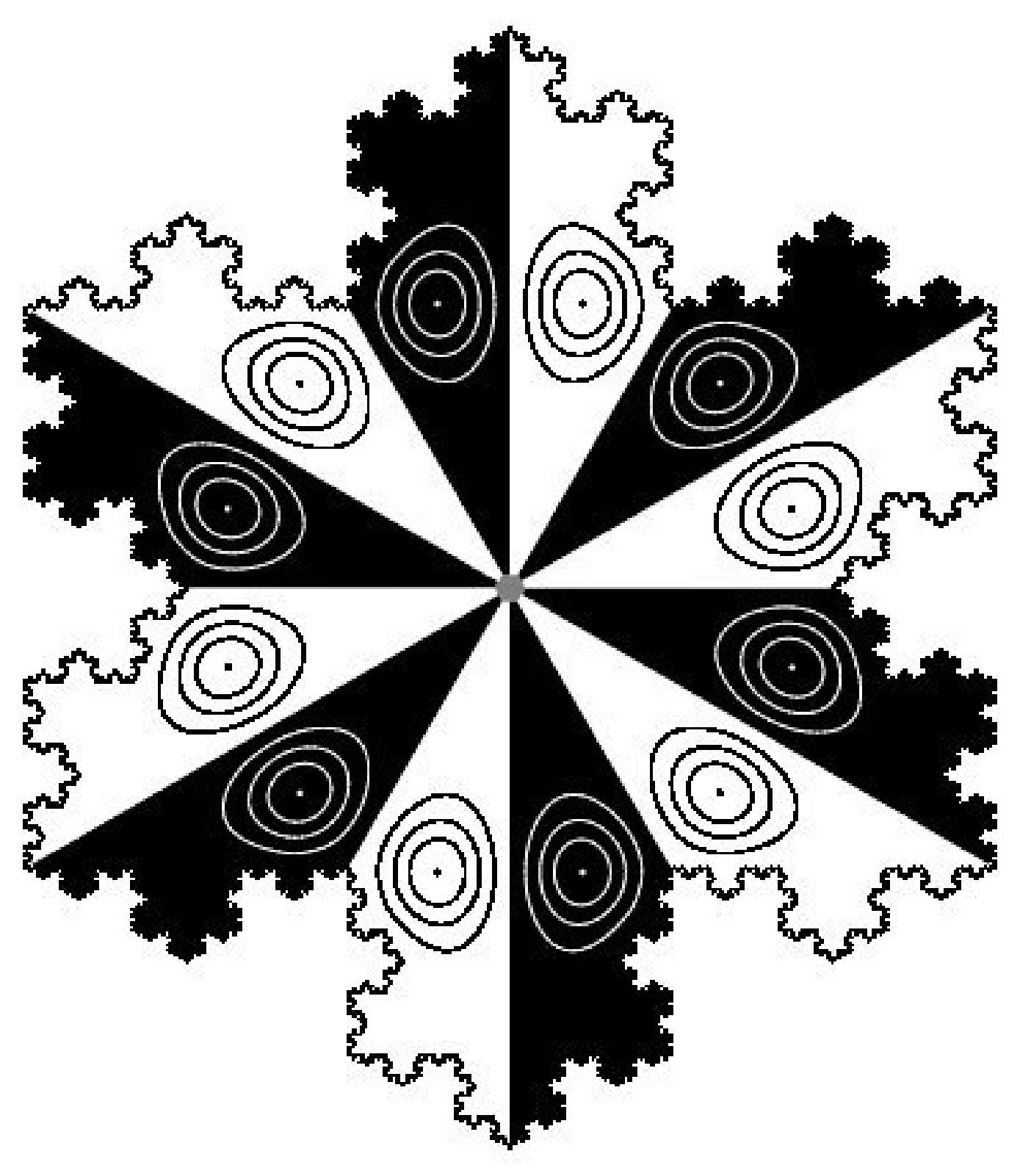}}
&
    \scalebox{\sbsize}{\includegraphics{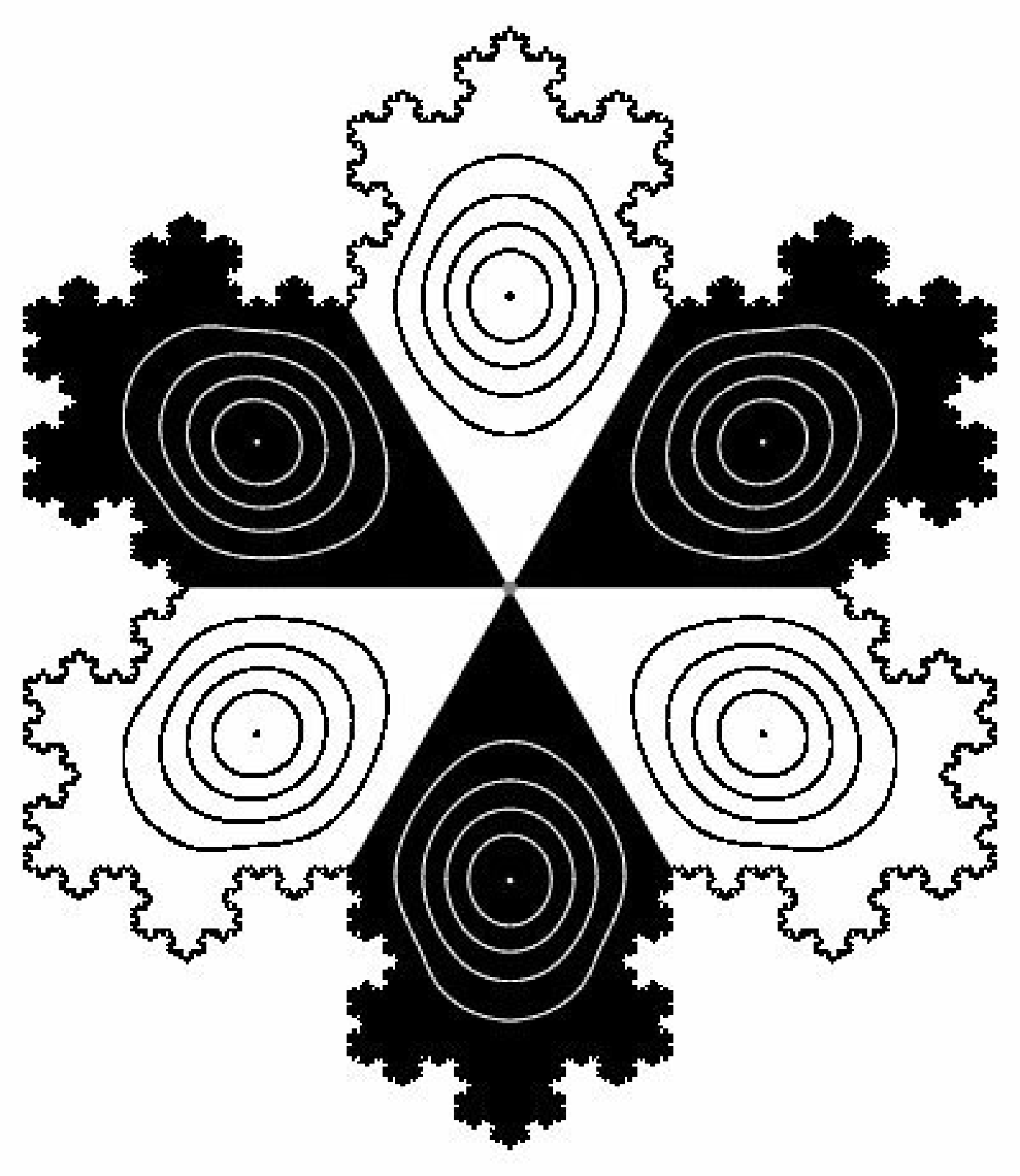}}
&
    \scalebox{\sbsize}{\includegraphics{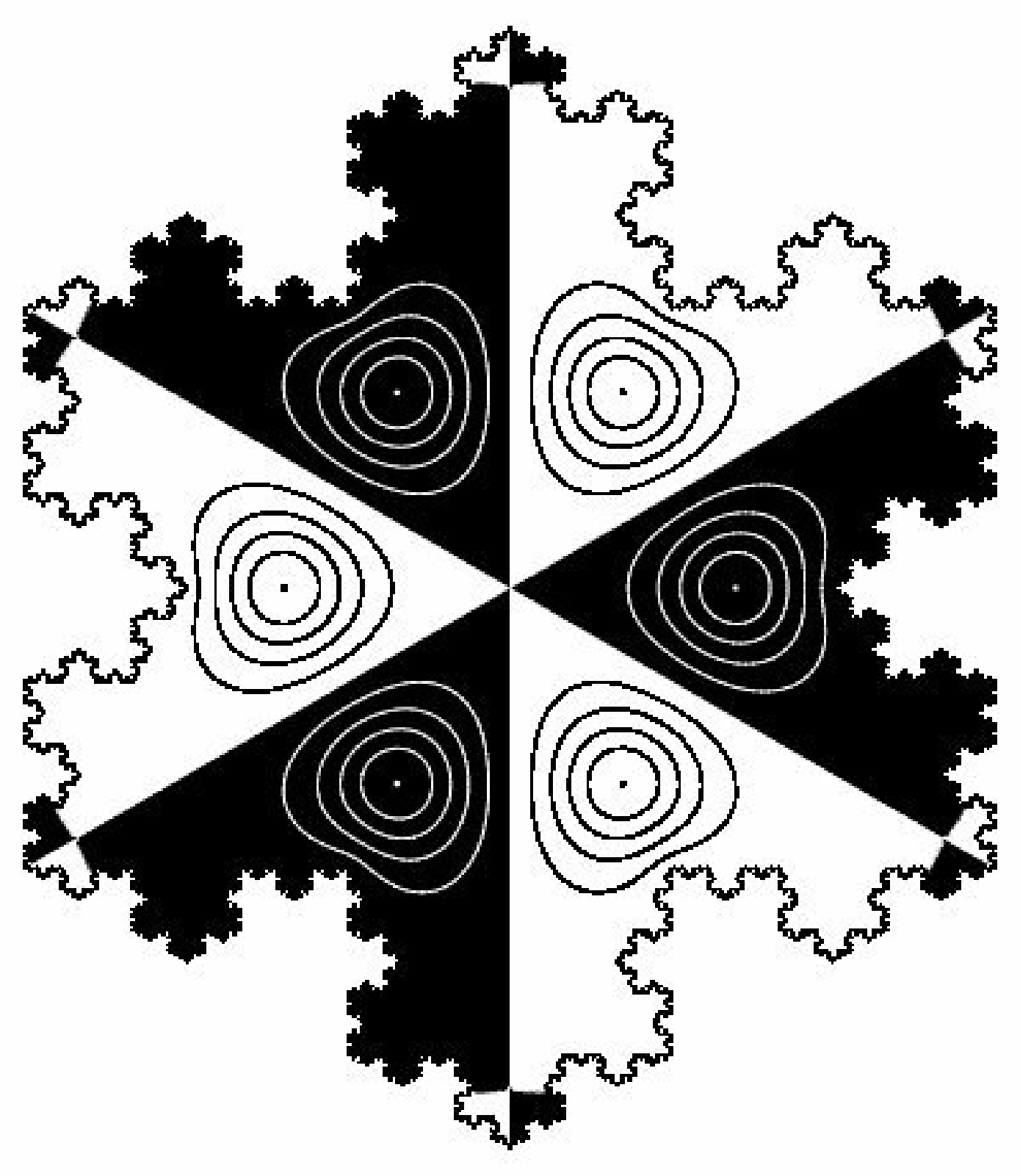}}

    \\

$\Gam_2 = \langle\rho,-\sigma, -\tau \rangle \cong \D_6$ &
$\Gam_3 = \langle-\rho,\sigma -\tau \rangle \cong \D_6$ &
$\Gam_4 = \langle-\rho,-\sigma,\tau\rangle \cong \D_6$
\\

\hline
\vphantom{{\rule{0cm}{\figheight}}}
    \scalebox{\sbsize}{\includegraphics{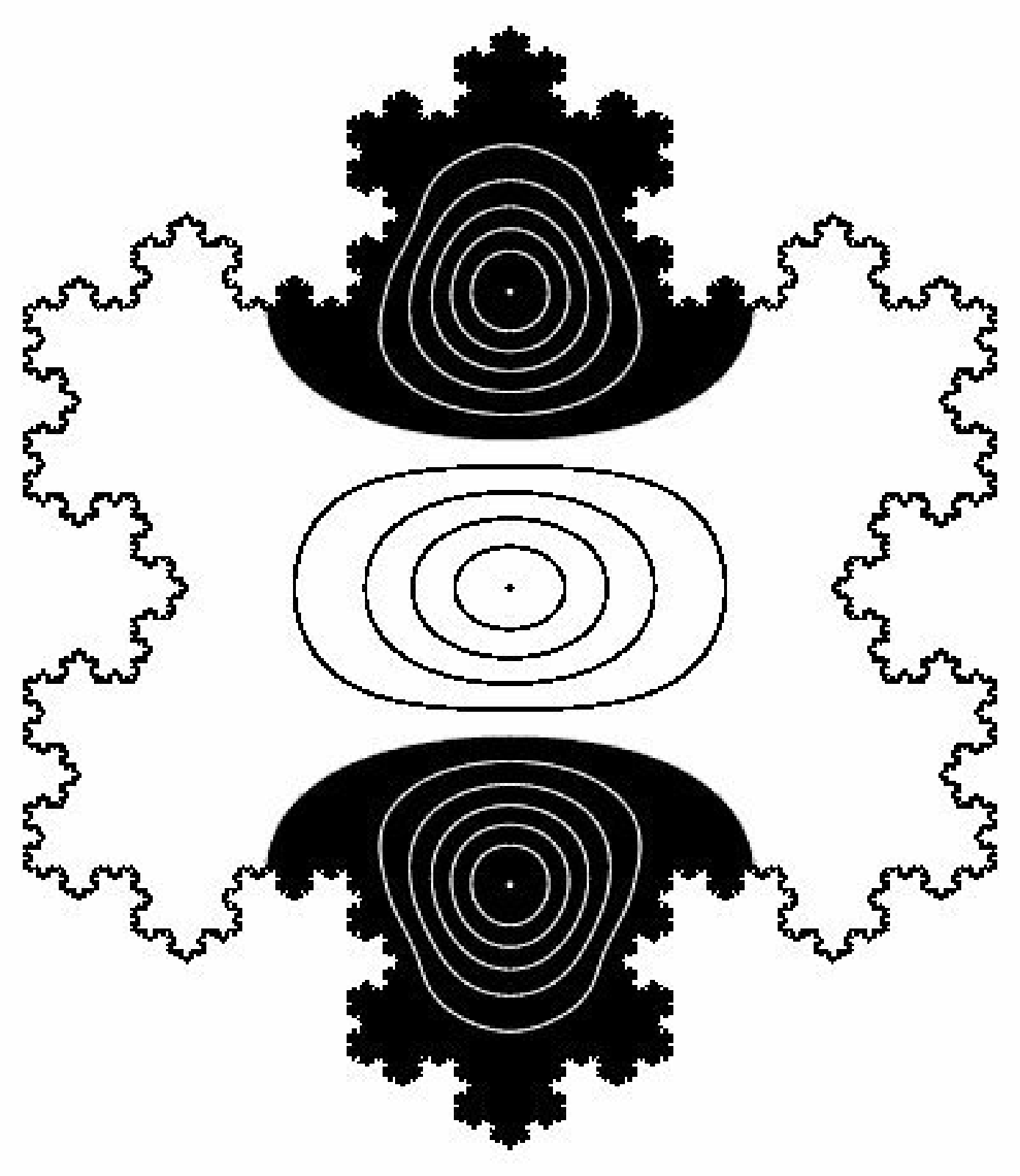}}
&
    \scalebox{\sbsize}{\includegraphics{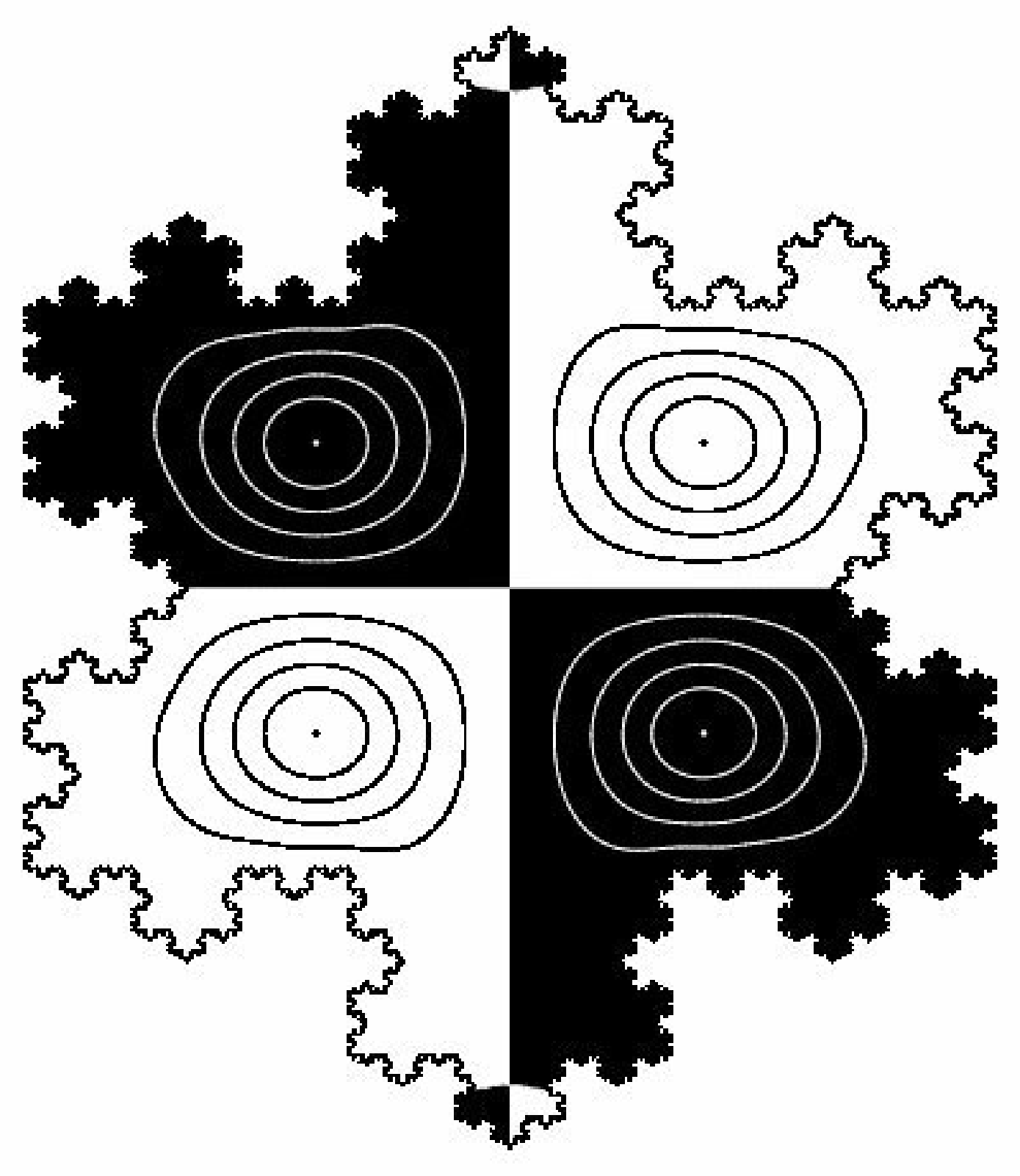}}
&
    \scalebox{\sbsize}{\includegraphics{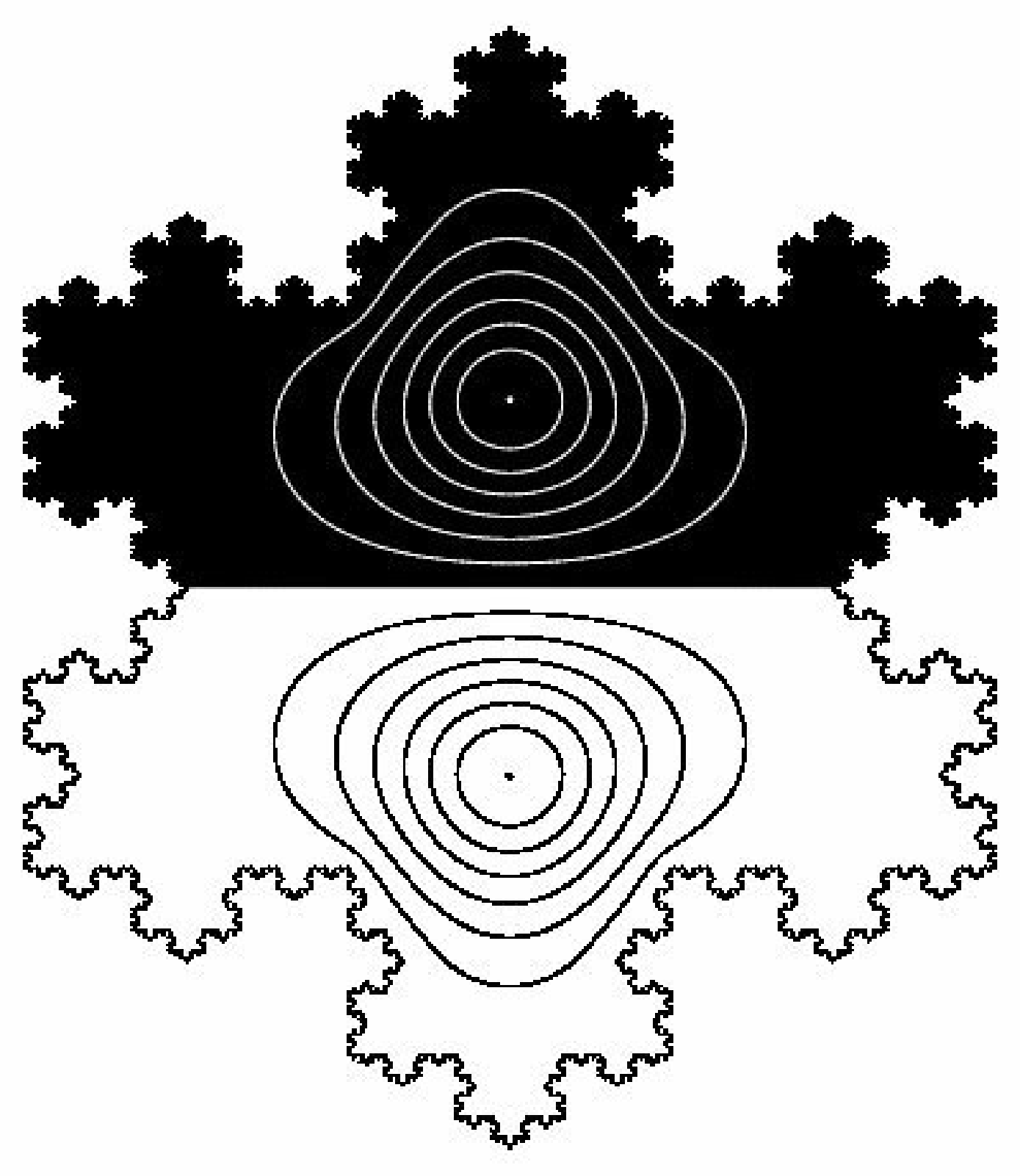}}
    \\
$\Gam_5 = \langle\sigma,\tau\rangle \cong \Z_2 \times \Z_2$  &
$\Gam_6 = \langle-\sigma,-\tau\rangle \cong \Z_2 \times \Z_2$ &
$\Gam_7 = \langle\sigma,-\tau\rangle \cong \Z_2 \times \Z_2$
\\

\hline
\vphantom{{\rule{0cm}{\figheight}}}
    \scalebox{\sbsize}{\includegraphics{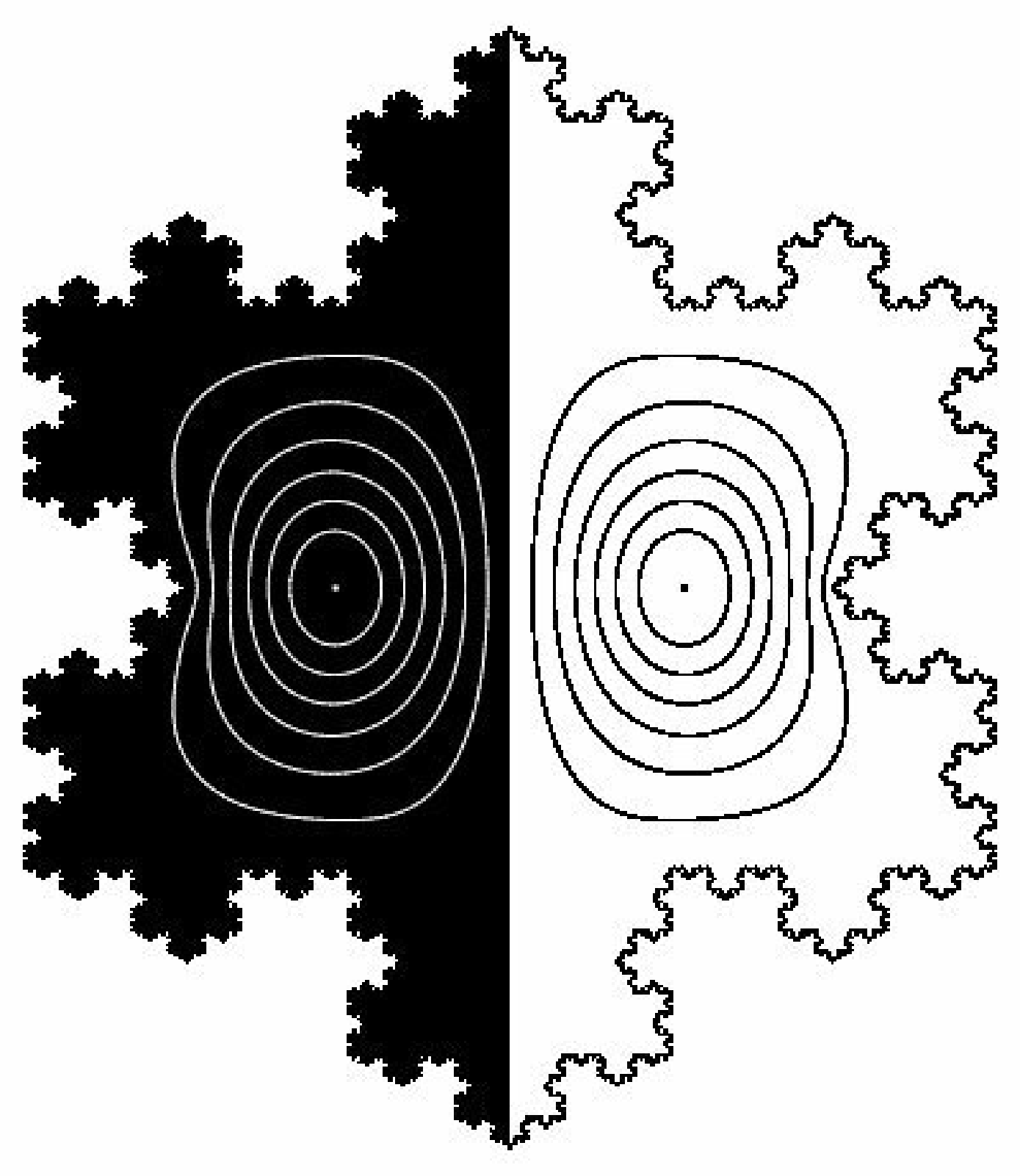}}
&
    \scalebox{\sbsize}{\includegraphics{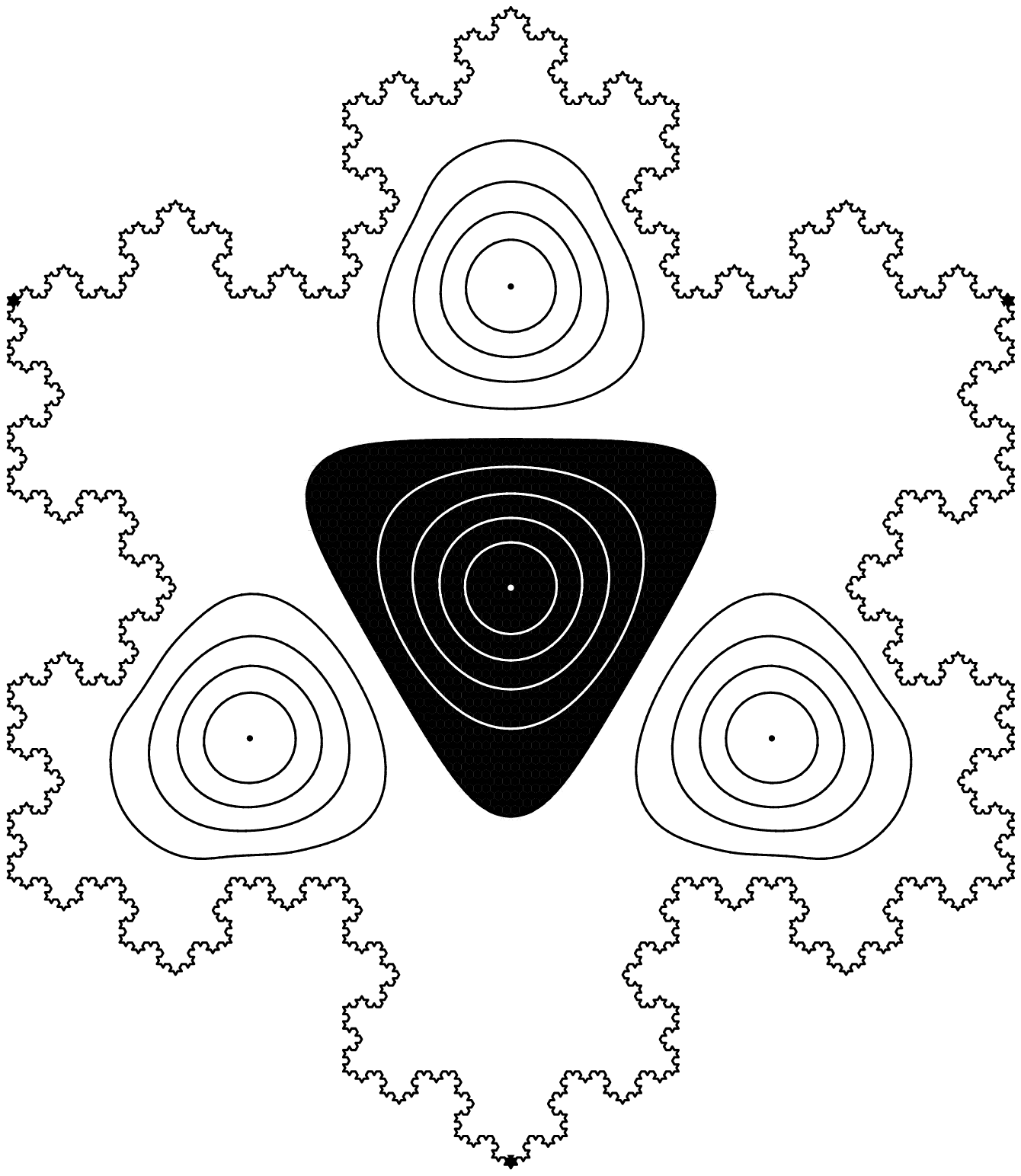}}
&
    \scalebox{\sbsize}{\includegraphics{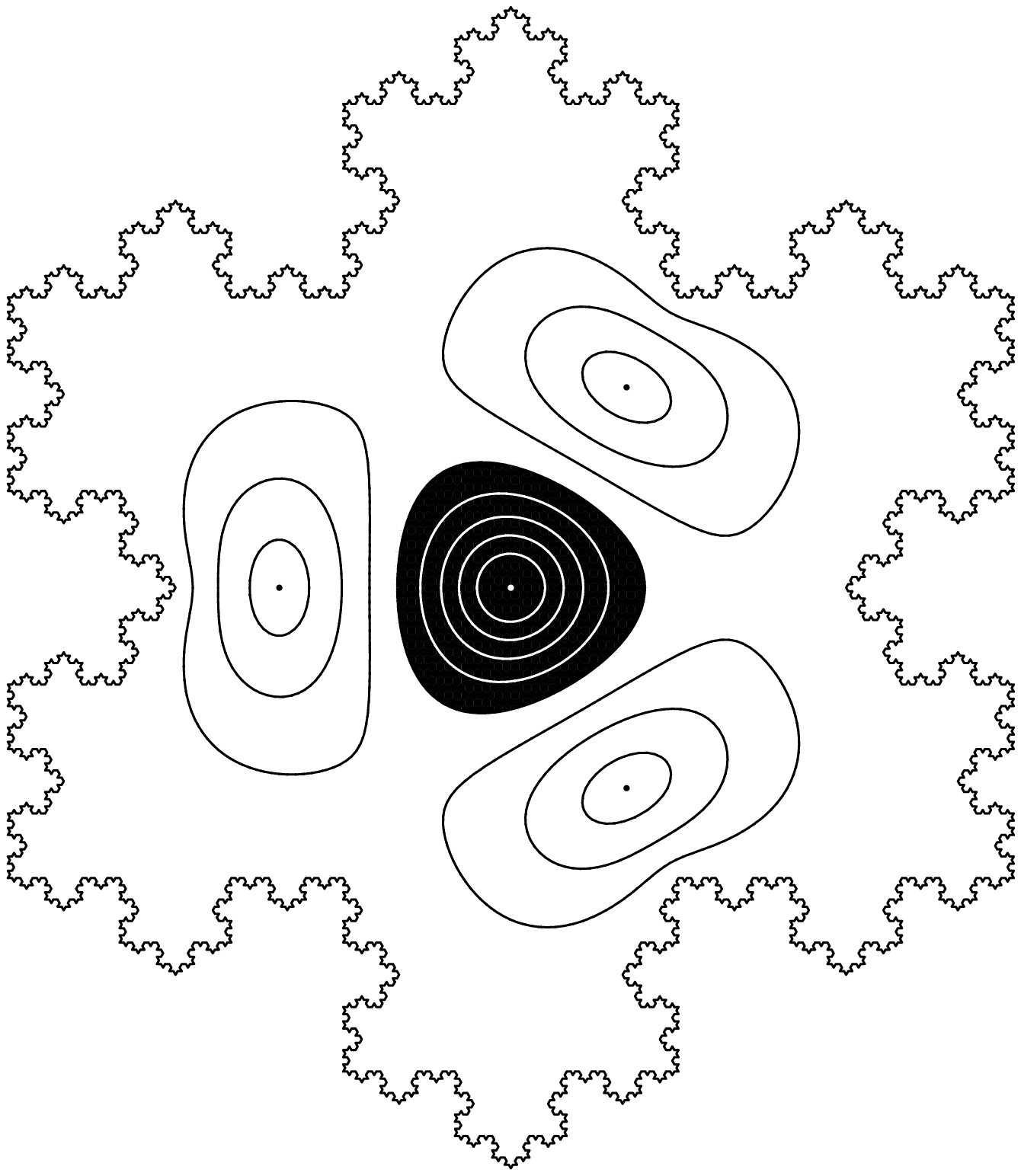}}
    \\
$\Gam_8 = \langle-\sigma,\tau\rangle \cong \Z_2 \times \Z_2$ &
$\Gam_9 = \langle\rho^2,\sigma\rangle \cong \D_3$ &
$\Gam_{10} = \langle\rho^2,\tau\rangle \cong \D_3$
 \\
\hline

%
\end{tabular}
%
\caption{The action of the generators of $\D_6$ on the plane, along with
contour plots of solutions with symmetry types $S_0,\ldots,S_{10}$ at $\lambda=0$.
Recall that $S_i = [\Gam_i]$.}
\label{sols1}
\end{center}
\end{figure}


\begin{figure}
\begin{center}
\begin{tabular}{|c|c|c|c|c|}
\hline
\vphantom{{\rule{0cm}{\figheight}}}
    \scalebox{\sbsize}{\includegraphics{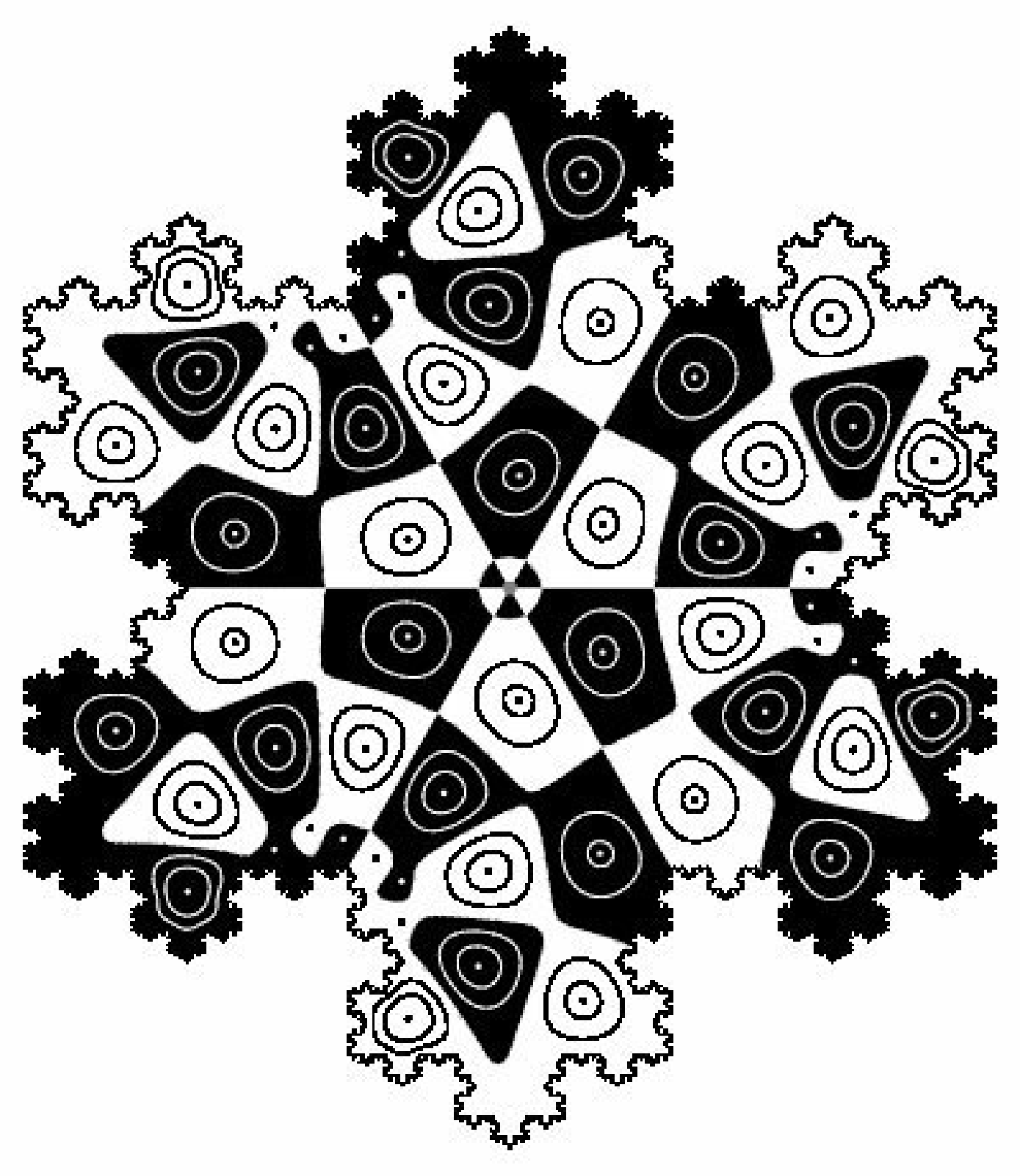}}
&
    \scalebox{\sbsize}{\includegraphics{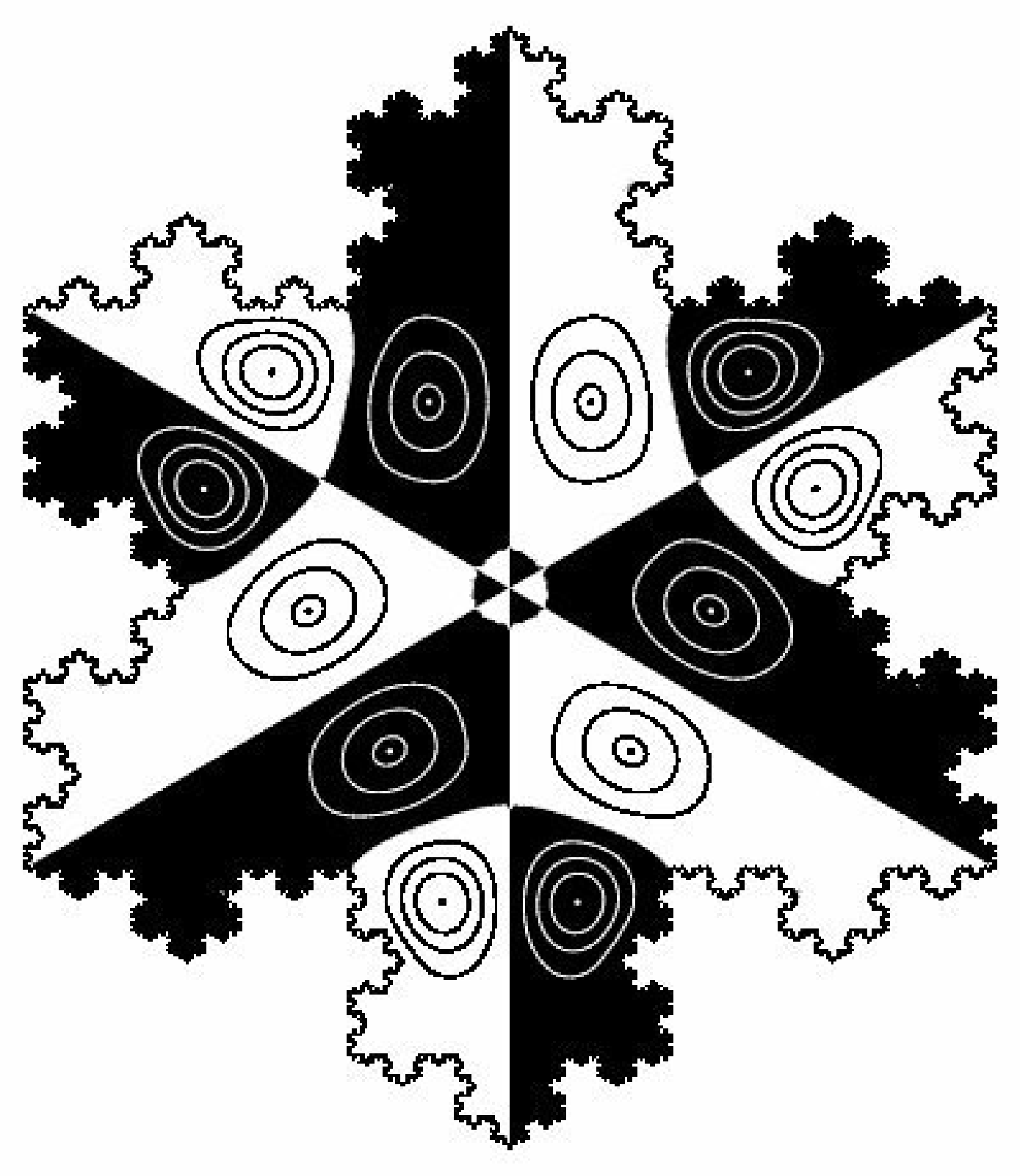}}
&
    \scalebox{\sbsize}{\includegraphics{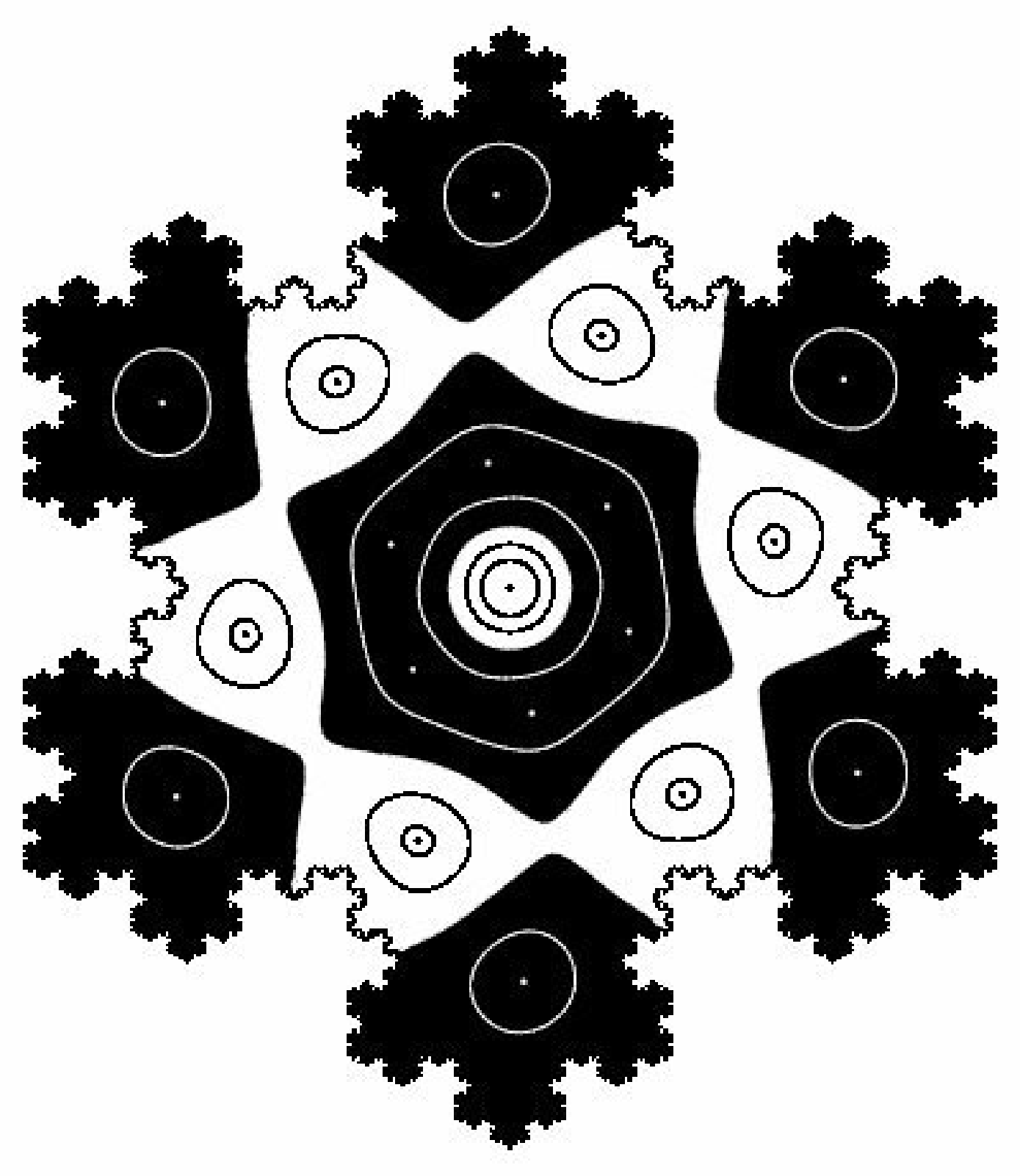}}
\\
$\Gam_{11} = \langle\rho^2,-\tau\rangle \cong \D_3$ &
$\Gam_{12} = \langle\rho^2,-\sigma\rangle \cong \D_3$ &
$\Gam_{13} = \langle\rho\rangle \cong \Z_6$
\\
\hline
\vphantom{{\rule{0cm}{\figheight}}}
    \scalebox{\sbsize}{\includegraphics{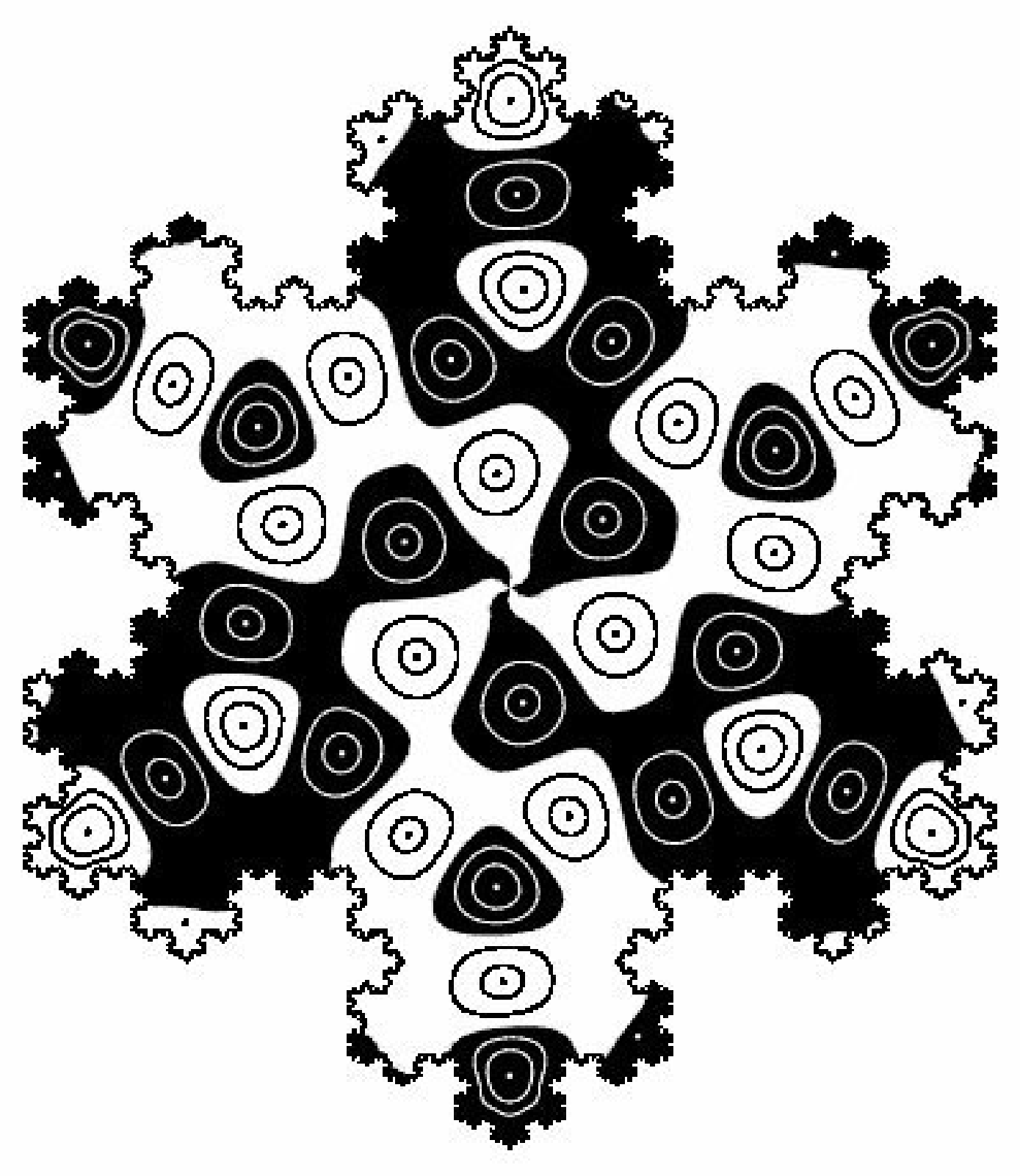}}
&
    \scalebox{\sbsize}{\includegraphics{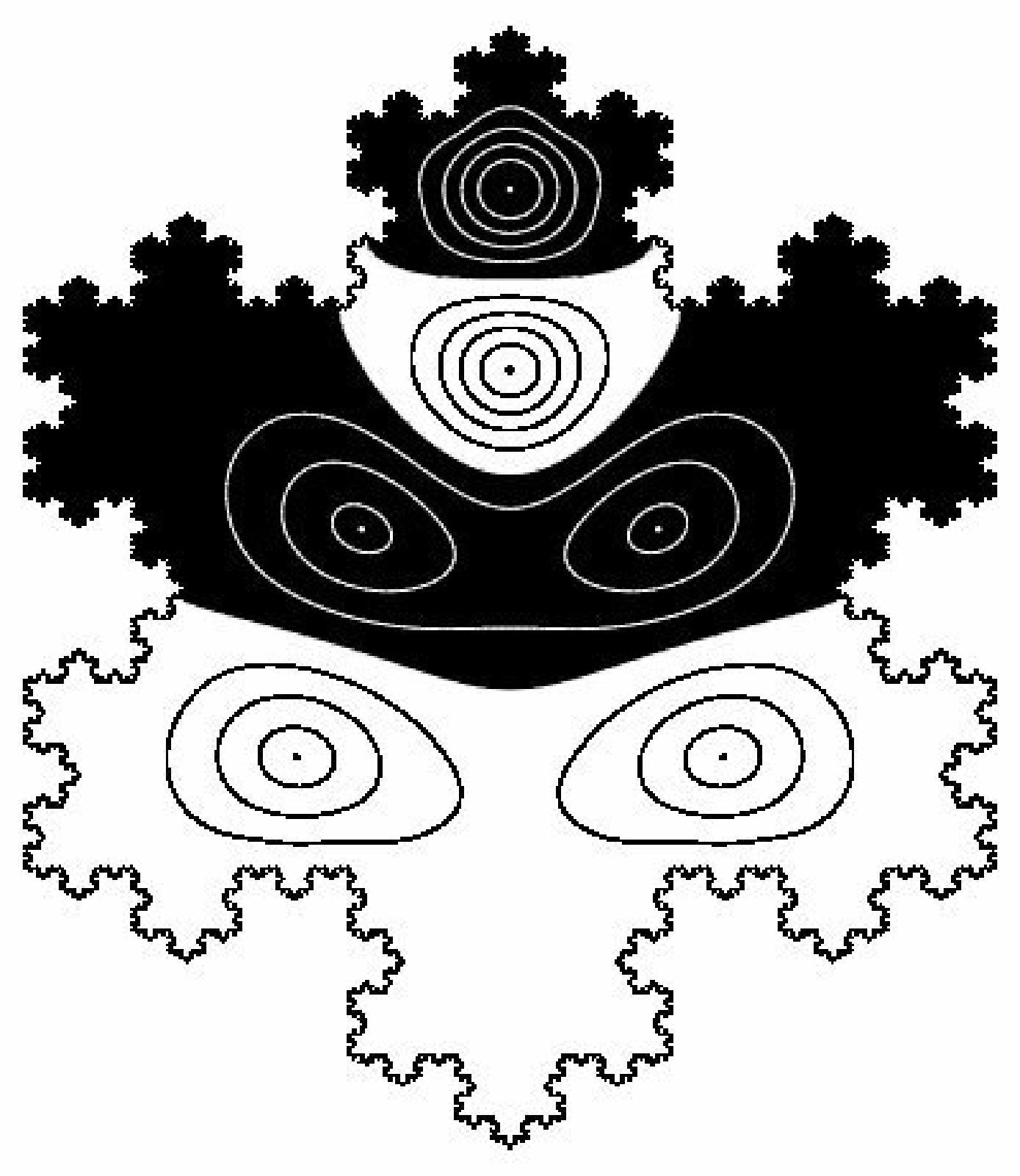}}
&
    \scalebox{\sbsize}{\includegraphics{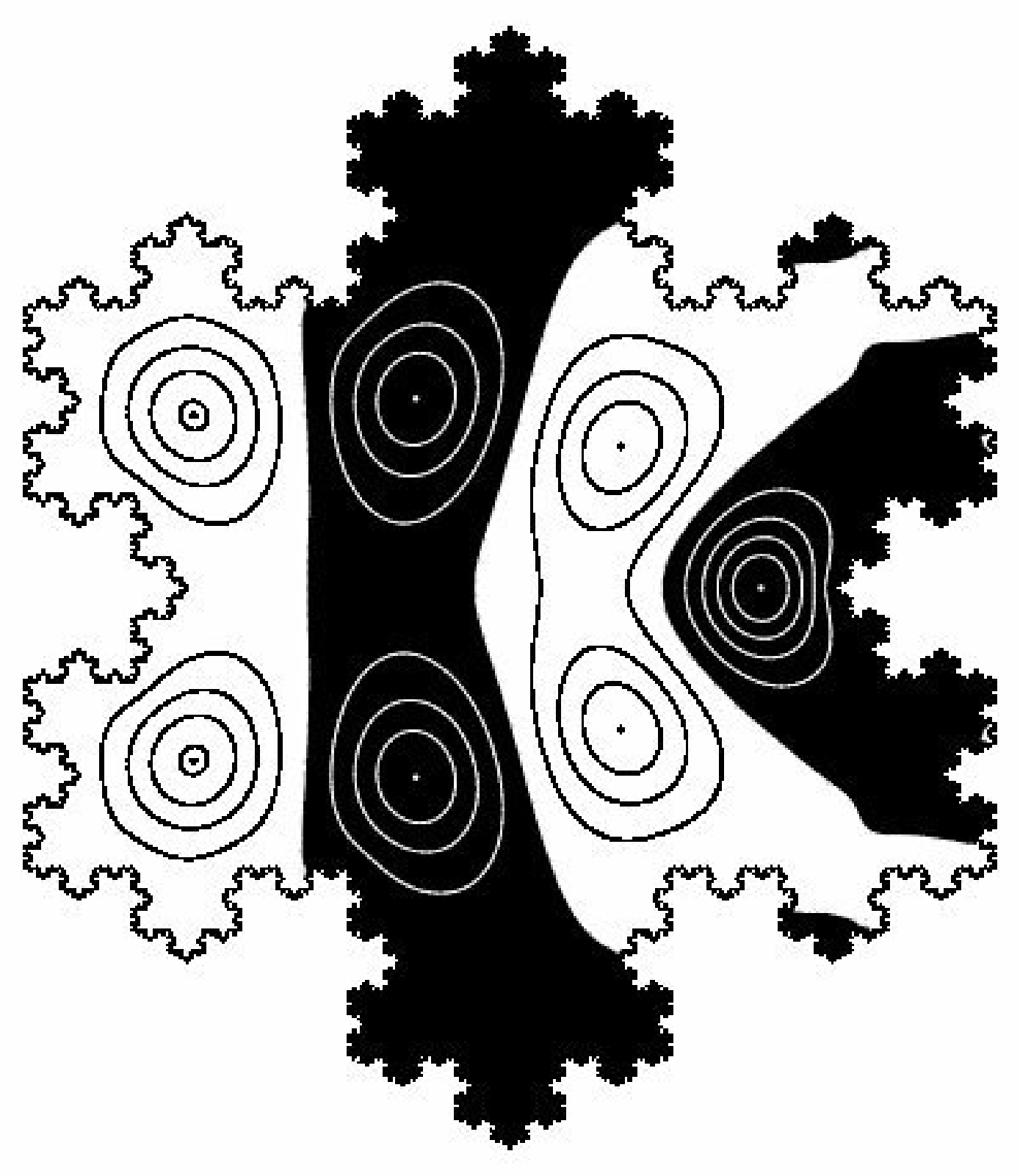}}
\\
$\Gam_{14} = \langle-\rho\rangle \cong \Z_6$ &
$\Gam_{15} = \langle\sigma\rangle \cong \Z_2$ &
$\Gam_{16} = \langle\tau\rangle \cong \Z_2$
\\
\hline
\vphantom{{\rule{0cm}{\figheight}}}
    \scalebox{\sbsize}{\includegraphics{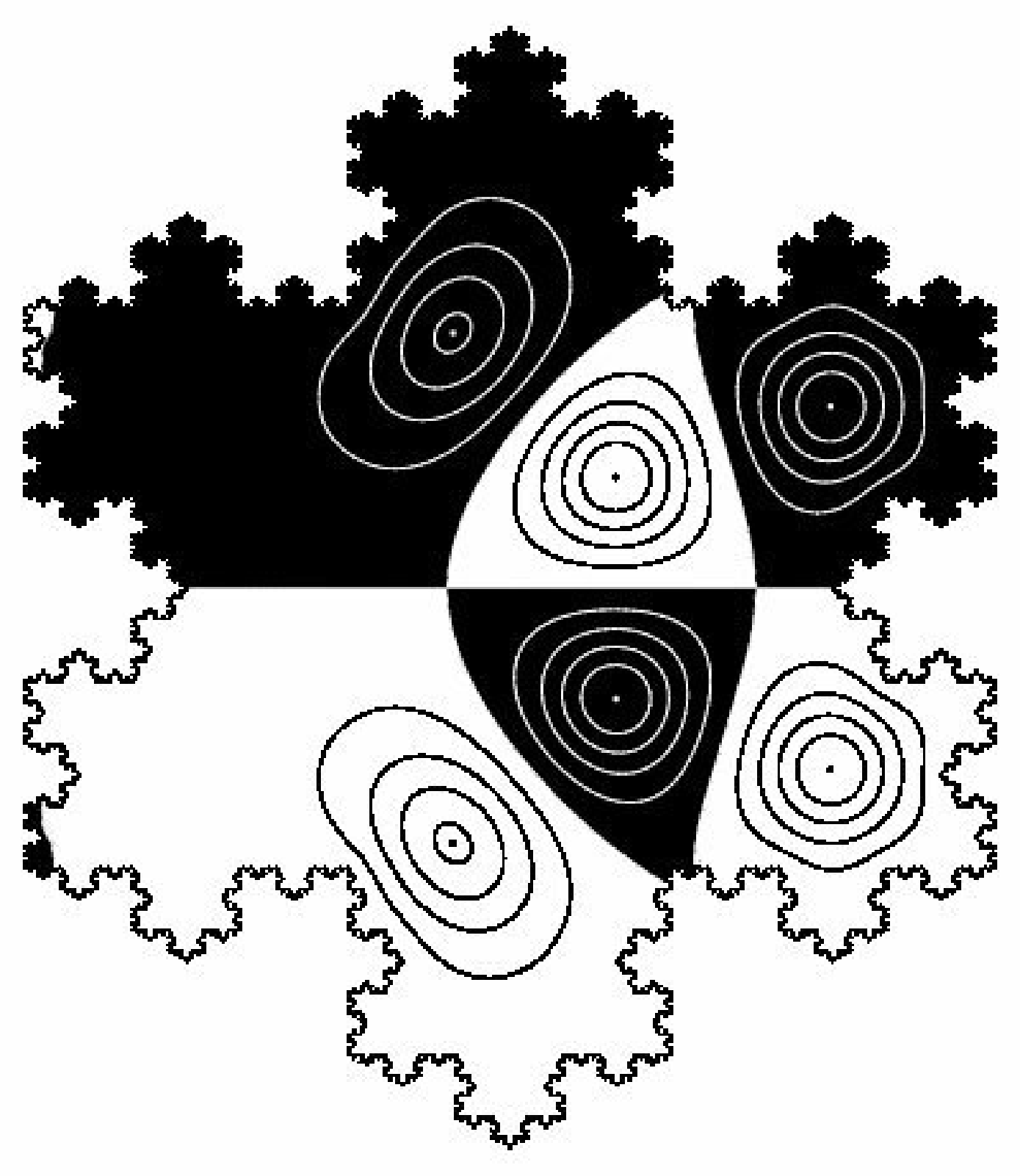}}
&
    \scalebox{\sbsize}{\includegraphics{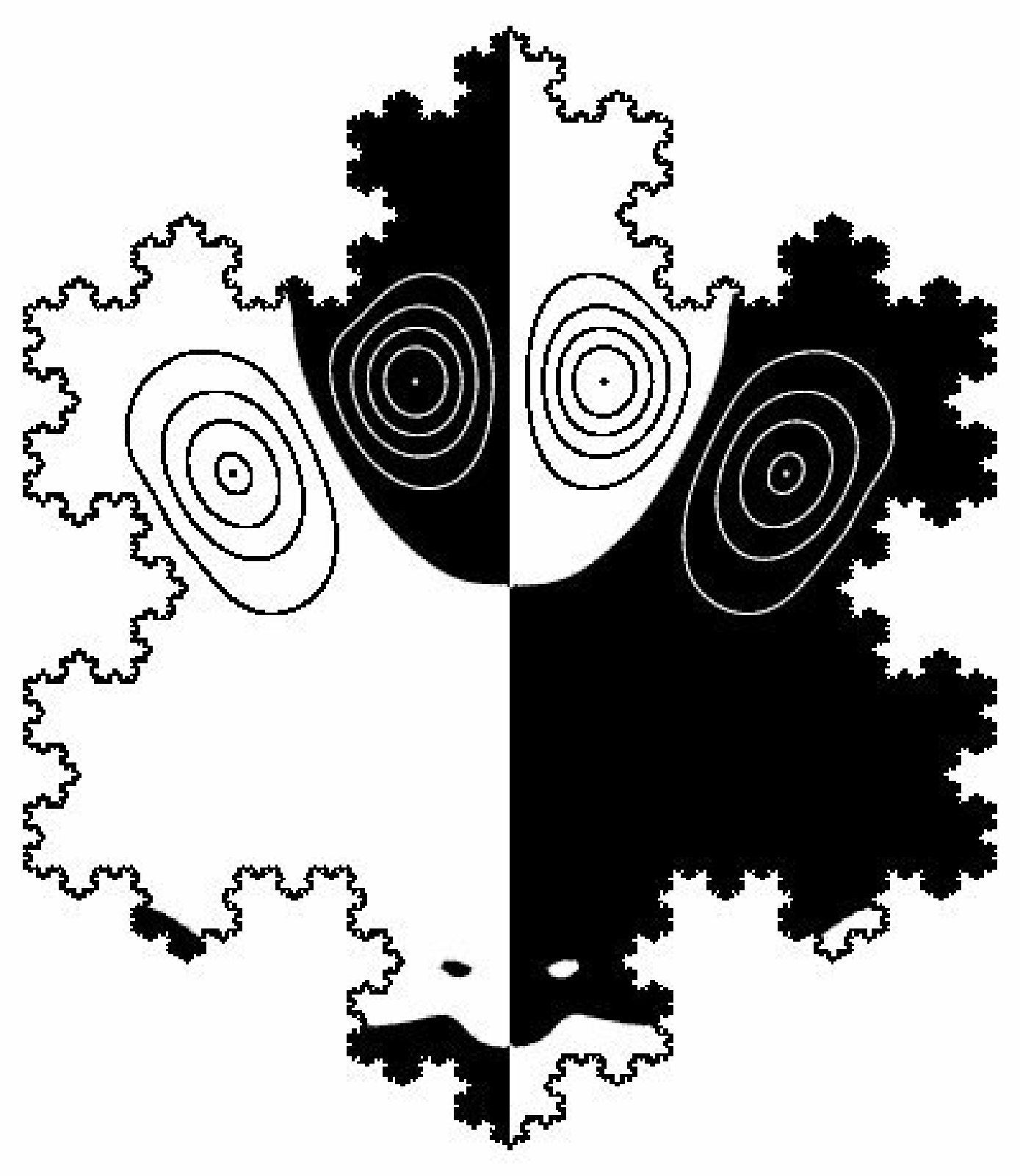}}
&
    \scalebox{\sbsize}{\includegraphics{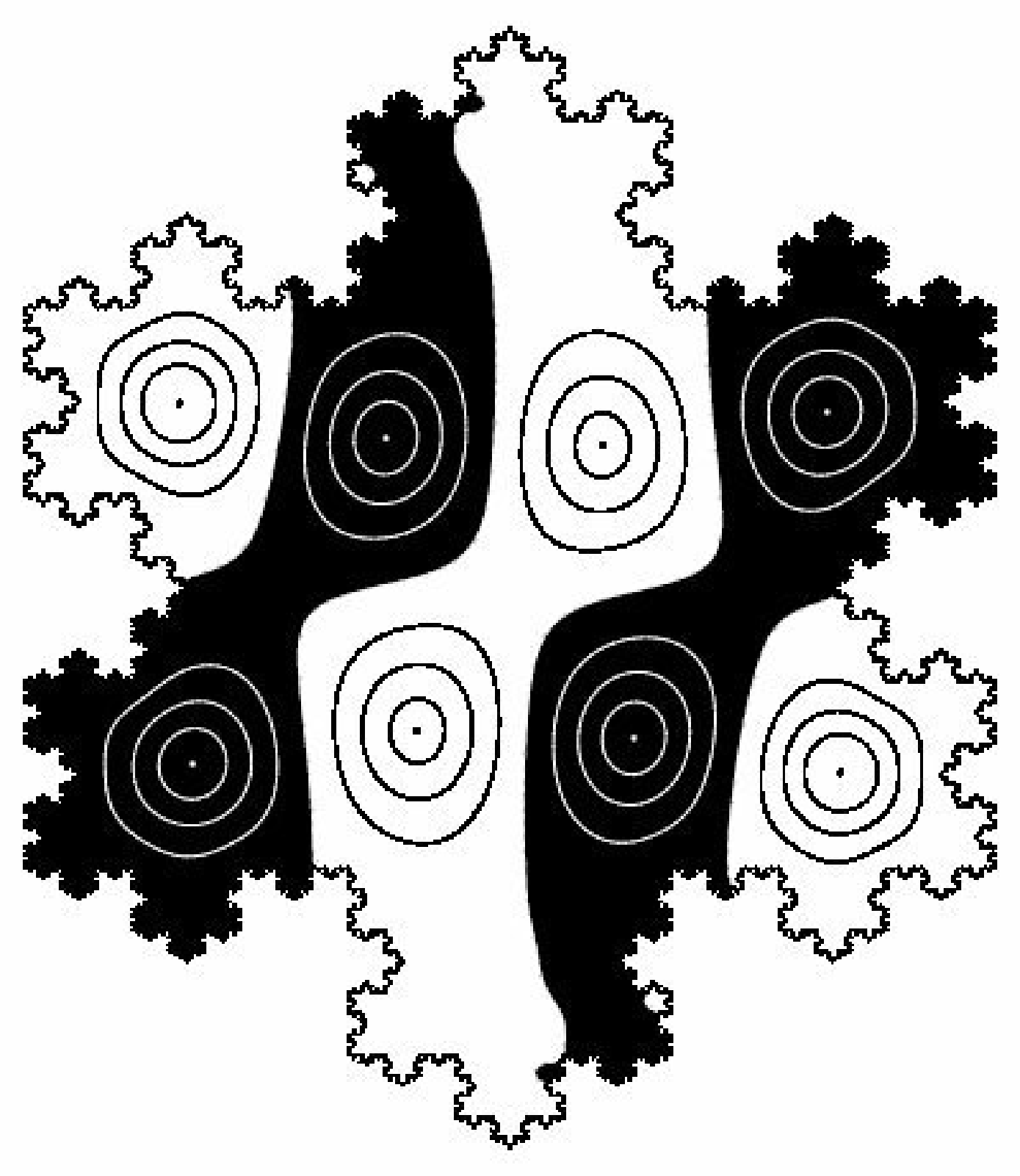}}
\\
$\Gam_{17} = \langle-\tau\rangle \cong \Z_2$ &
$\Gam_{18} = \langle-\sigma\rangle \cong \Z_2$ &
$\Gam_{19} = \langle\rho^3\rangle \cong \Z_2$
\\
\hline
\vphantom{{\rule{0cm}{\figheight}}}
    \scalebox{\sbsize}{\includegraphics{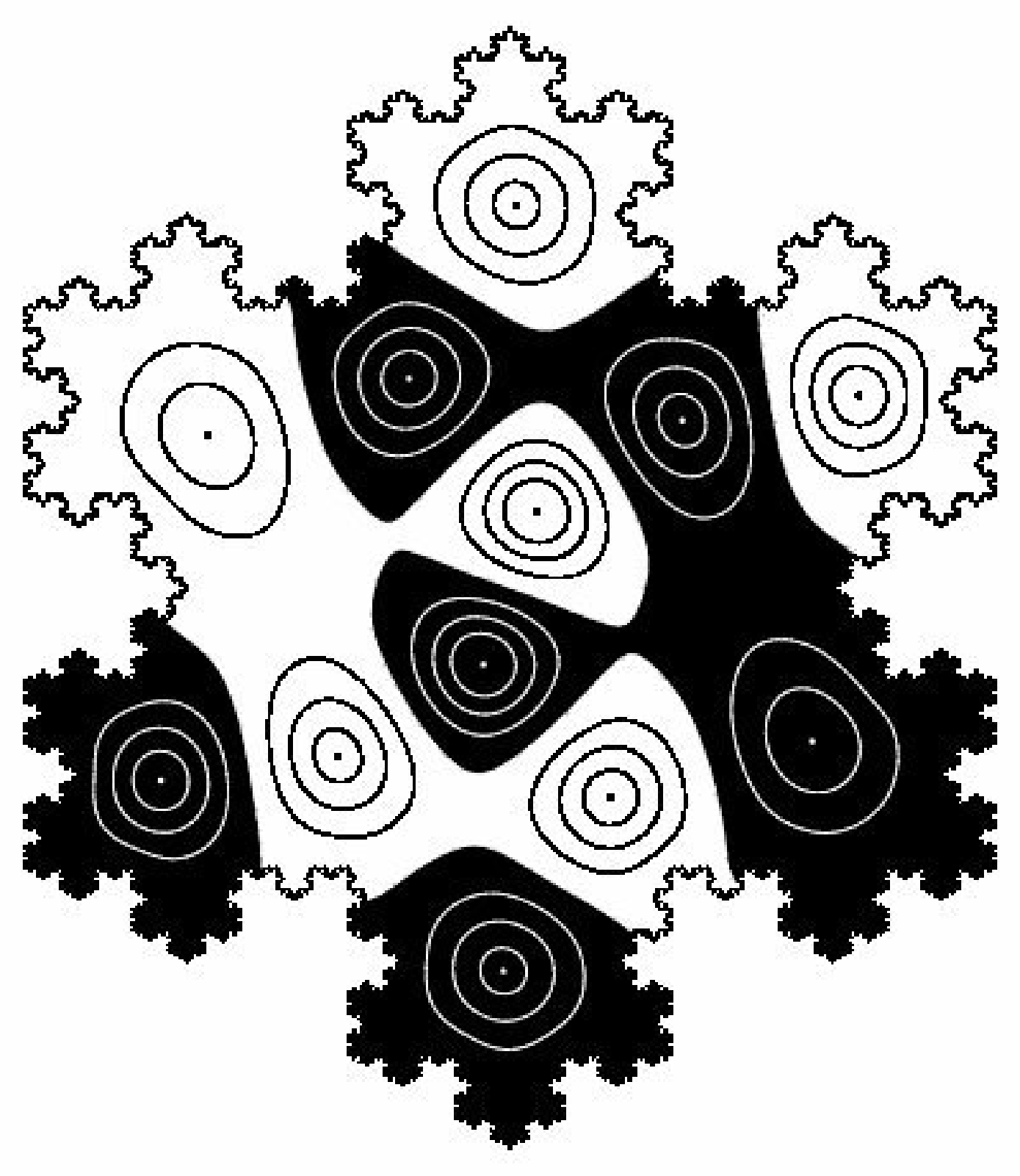}}
&
    \scalebox{\sbsize}{\includegraphics{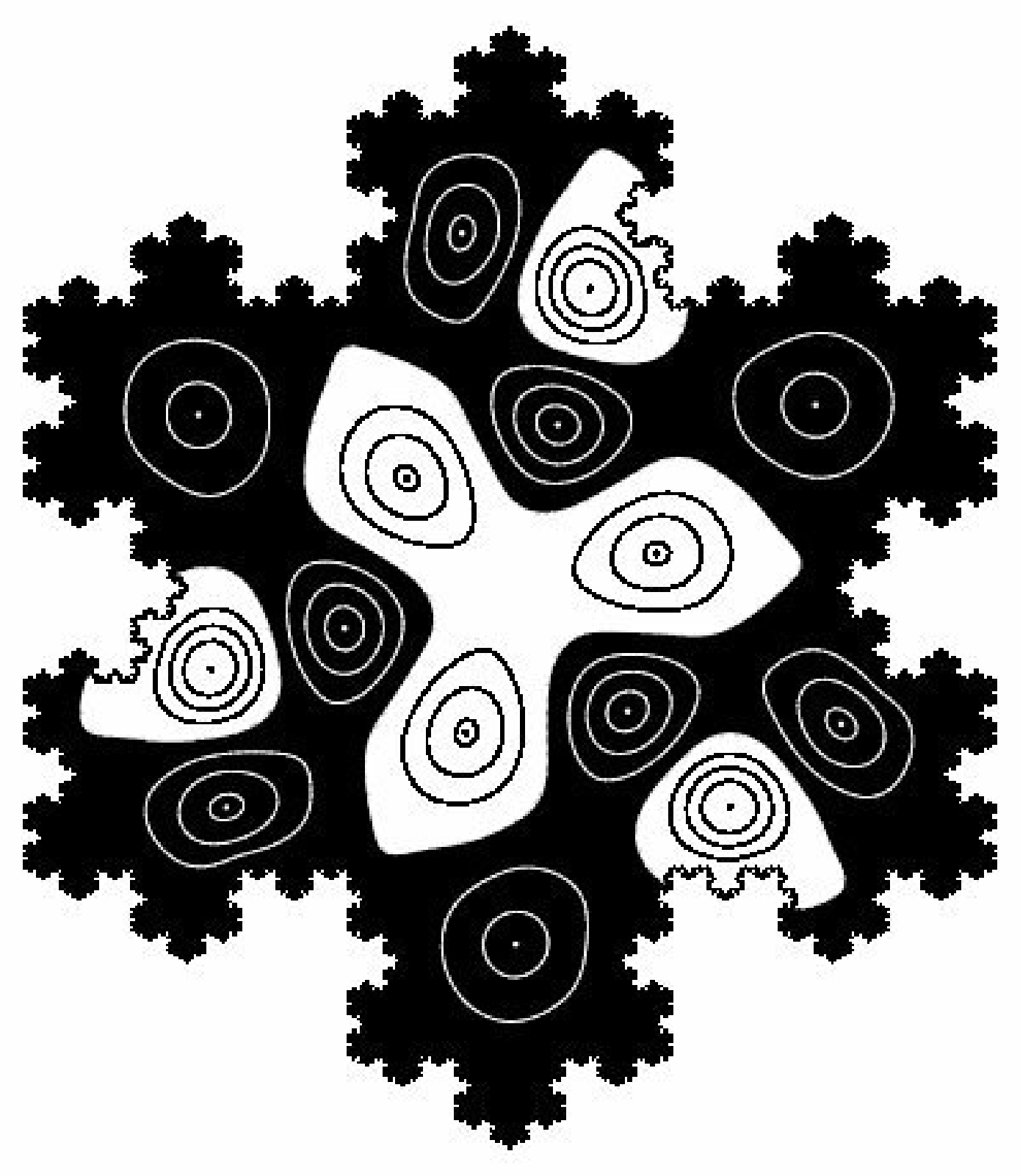}}
&
    \scalebox{\sbsize}{\includegraphics{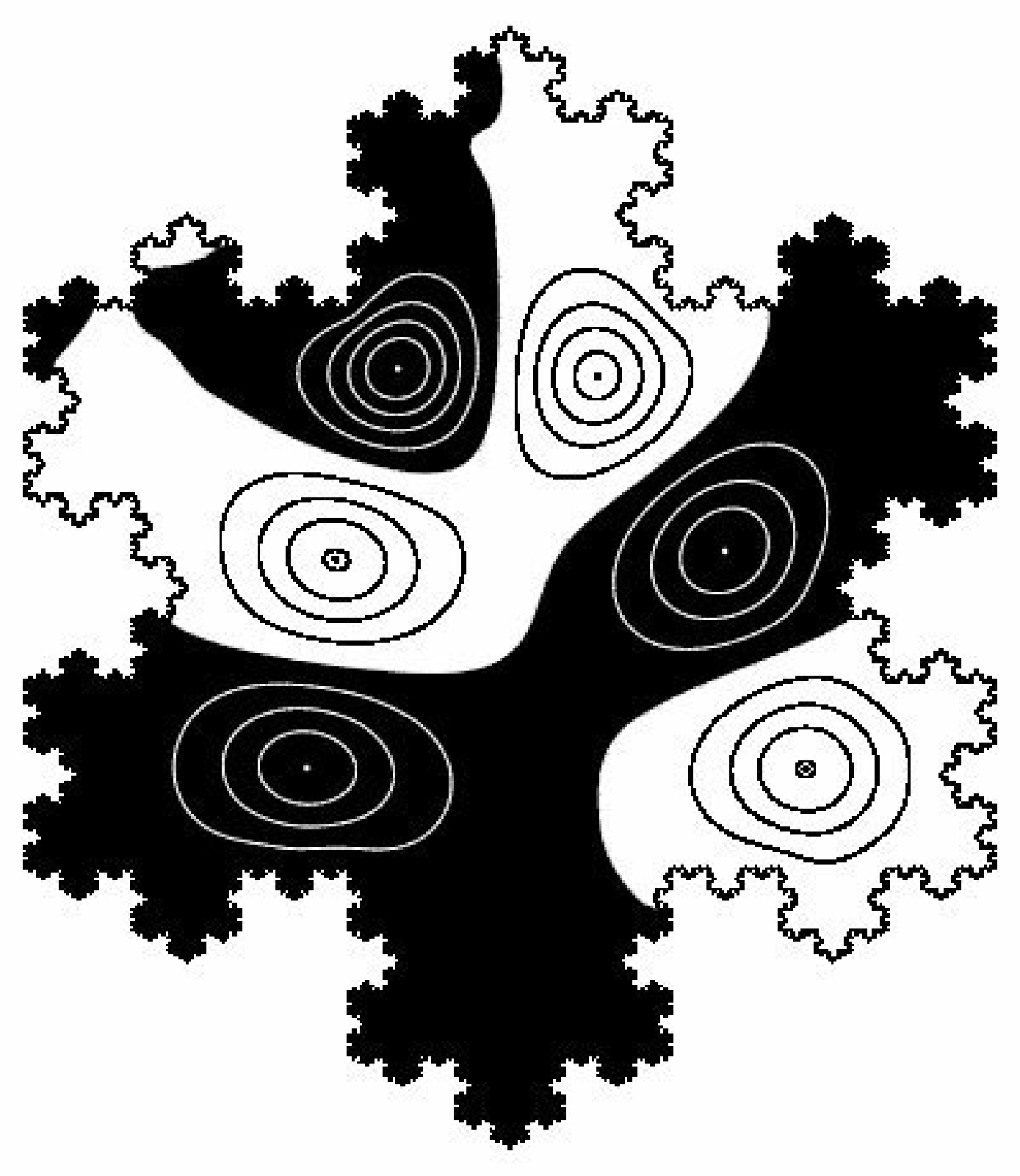}}
\\
$\Gam_{20} = \langle-\rho^3\rangle \cong \Z_2$  &
$\Gam_{21} = \langle\rho^2\rangle \cong \Z_3$   &
$\Gam_{22} = \langle 1 \rangle$
\\
\hline
\end{tabular}
%
\caption{Contour plots of solutions with symmetry types $S_{11},\ldots,S_{22}$ at $\lambda=0$.}
\label{sols2}
\end{center}
\end{figure}

Our goal was to find solutions to (\ref{pde}, \ref{nonlinearity}) at $\lam = 0$ with
each of the 23 symmetry types.
The 24-th primary branch is the first one with symmetry type $S_2$, so we
followed the first 24 primary branches.
With level $\ell = 5$ and $M = 300$ modes, which gave our most accurate results,
this found solutions with all symmetry types except $S_{11}$ and $S_{14}$.  We then
searched the first 100 primary branches, only following solutions with symmetry above $S_{11}$ and $S_{14}$
on the bifurcation digraph (Figure \ref{digraph}.)
In this way we found solutions with all 23 symmetry types.  The bifurcation diagrams which
lead to these solutions are shown in Figures~\ref{bif1-6}--\ref{bifD}.  We chose one solution at $\lam = 0$
with each symmetry type by taking the one descended from the lowest primary branch.  These choices are indicated
by dots in Figures~\ref{bif1-6}--\ref{bifD}, and the corresponding contour diagrams of the solutions
are shown
in Figures~\ref{sols1} and \ref{sols2}.
The contour diagrams use white for $u > 0$ and black for $u < 0$, and
gray indicates $u = 0$.
Equally spaced contours are drawn
along with dots for local extrema.
Details about the technique for generating these contour diagrams are found in \cite{nss}.

We ran our experiments using a range of modes and levels in order to
observe convergence and qualitative stability of the implementation of our algorithm.
At level $\ell = 5$ we have computed
300 eigenfunctions so $M \leq 300$ is possible.  At level $\ell = 6$ we computed only 100 eigenfunctions.
Due to our limited computational resources, using more than 100 modes
on level 6 was not practical.

As an indication of the convergence, consider the bifurcation diagram in Figure \ref{bif1-6}.
The diagram looks qualitatively the same for any choice of $\ell$ and $M$ that we used.
The position of the bifurcation point creating the $S_{10}$ solution (near $\lam = 30$) changes slightly,
according to this table:
\begin{center}
\begin{tabular}{c|ccc}
          & $\ell = 4$ & $\ell = 5$ & $\ell = 6$ \\
\hline
$M = 100$ & 35.3931 & 34.9814 & 34.9252 \\
$M = 200$ & 32.1131 & 32.2964 &  \\
$M = 300$ &         & 32.0518 &
\end{tabular} .
\end{center}
The level 5 and 6 approximations with $M = 100$ modes are very close, but increasing the mode number
has more of an effect.
This indicates that the results with $(\ell, M) = (5, 300)$
are more accurate than those with $(6,100)$.
Figure~{\ref{levelmode}} shows how $u(2/27, 4\sqrt{3}/27)$ varies with mode number and $\ell$
for the solution with $S_{10}$ symmetry at $\lambda=0$ shown in Figures \ref{bif1-6} and \ref{sols1}.
The horizontal segments of the graphs correspond to the addition of modes with zero coefficients
for this solution.
Based on this and other similar convergence results,
we chose to use level 5 with 300 modes in most of our
numerical experiments.
\begin{figure}
\begin{center}
\rotatebox{-90}{\scalebox{.6}{\includegraphics{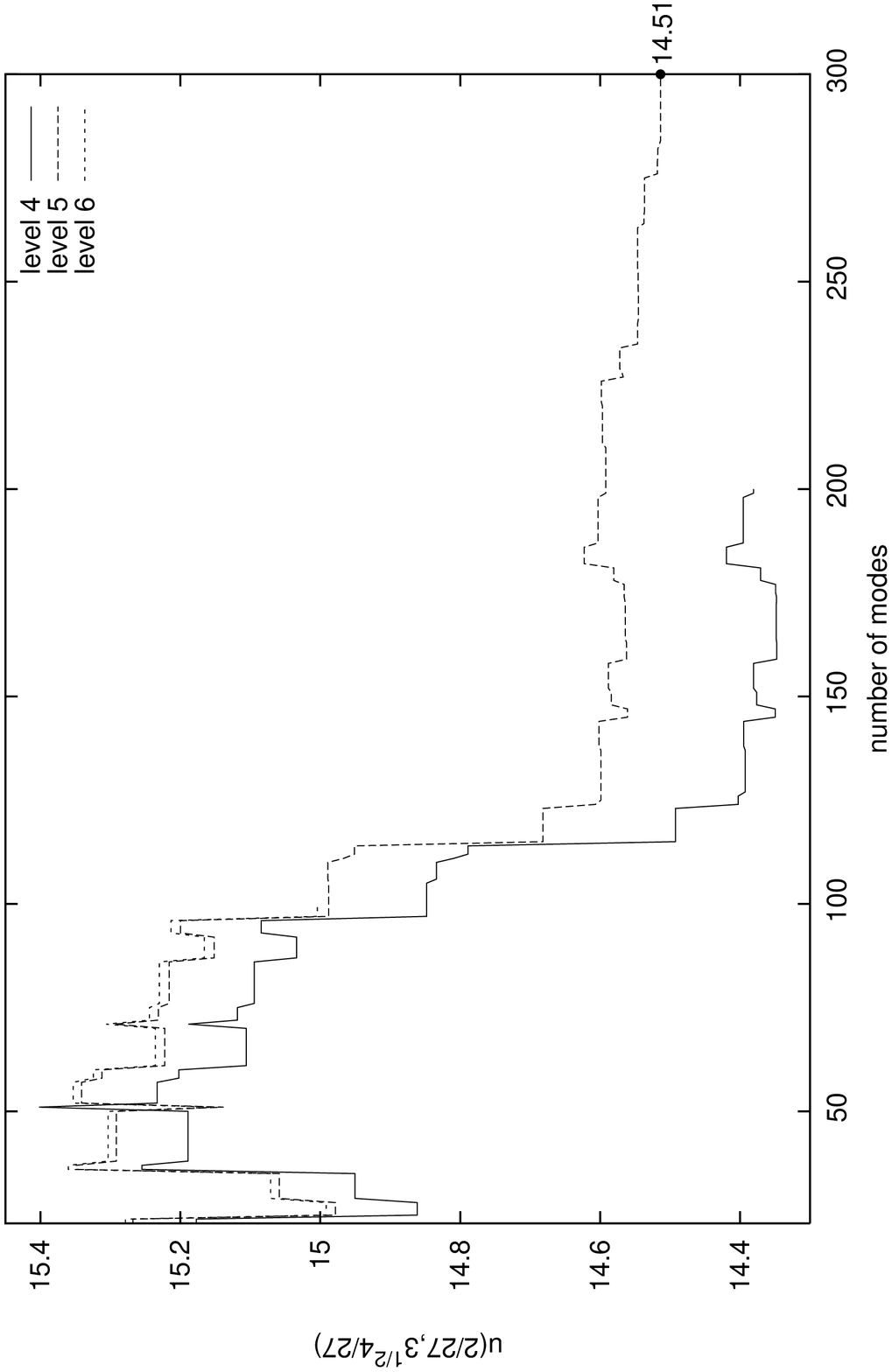}}}
\caption{A plot of $u(2/27,4 \sqrt{3}/27)$ as a function of the number of modes for the
lowest energy solution at $\lambda=0$ with symmetry type $S_{10}$. The point at $M = 300$
matches the point labelled with $S_{10}$ in Figure~\ref{bif1-6}.
}
\label{levelmode}
\end{center}
\end{figure}

\end{section}

%


\begin{section}{Conclusions.}
\label{conclusion_section}

We are currently working on a more general program for recursive branch
following in symmetric systems.
%
Starting with any graph, the analog to Equation
\ref{pde} is the Partial {\em difference} Equation (PdE) $Lu + f(u) = 0$ \cite{graph},
where $L$ is the well-known discrete Laplacian on that graph
and $u$ is a real-valued function on the vertices.
Discretizing a PDE as we have done in this paper leads to a
PdE on a graph with a large number of vertices.
The grid points are the vertices of the graph, and the edges of the graph connect nearest
neighbor grid points.
Starting with an arbitrary graph, our new suite of programs will
analyze the symmetry of the graph and
compute the bifurcation diagrams for the PdE on the graph.

The programs we describe in the current paper will work with other
superlinear odd $f$ and other regions with hexagonal symmetry. The
nonlinearity $f$ needs to be superlinear since our program assumes that
the branches eventually ``go to the left."  Our general program
will not have this restriction; the GNGA and pmGNGA will be replaced by
a single method of branch
following that is able to go through fold points, and has no prejudice
about the parameter increasing or decreasing.  This
new method of branch following has already been successfully implemented
in \cite{thompson}.
We hope to write the new code so that a cluster of computers can
be used in parallel, with each computer following a single branch at one time,
under the control of a central PERL script.

With minor modifications, our program would analyze the PDE (\ref{pde})
even when $f$ is not odd.
The appropriate bifurcation digraph for $\D_6$ acting on $L^2(\Om)$
is a subgraph of the digraph in Figure \ref{digraph},
so the bifurcating branches would be followed properly
unless the symmetry of the mother solution is incorrectly identified.
The Perl scripts which start with the trivial branch would have to be modified, since $u = 0$ is not
a solution when $f$ is not odd (unless $f(0) = 0$).
If $f(0) = 0$, the trivial branch exists, but its bifurcations
are not properly described by the bifurcation digraph in Figure~\ref{digraph},
and some special code would be needed to handle these bifurcations.



It is valid to ask the question ``does the GNGA converge" (as
implemented in this current research). While we do not have a
complete proof affirming the positive of this conjecture, many
references contain relevant theorems. The GNGA is an
implementation of Newton's method, which indeed converges under
standard assumptions. In \cite{k2}, one finds the classical fixed
point iteration proof that Newton's method in $\R^N$ converges
when the initial guess is sufficiently close to a nondegenerate
zero of the object function. This proof applies almost without
change to the infinite dimensional case. Also addressed in
\cite{k2} are algorithms where the object function and/or its
derivative are only approximated; this would apply to our
implementation due to numerical integration errors, as well as
owing to our imperfect knowledge of the eigenfunctions and
corresponding eigenvalues. While not discussed exactly in the
cited literature, elementary fixed point arguments indicate that
the restriction of our object function $\nabla J$ to sufficiently
large subspaces $B_M$ will still result in convergent iterations.
It would be worthwhile to string these type of results together in
order to obtain a ``best possible'' GNGA convergence theorem.
%
%
Monograph \cite{k1} gives an easy introduction into some of the details of implementing Newton's method to solve nonlinear problems.
Further, in the spirit of \cite{cdn} and \cite{wangzhou1},
by the invariance of the Newton map, any convergence result should hold in fixed point
subspaces corresponding to a given symmetry type.
The articles \cite{lizhou,wangzhou1} and others by those authors discuss the convergence of algorithms
similar to the GNGA, at times also considering symmetry restrictions.
Finally, the well-known book \cite{berger} contains relevant convergence results for Newton and approximate Newton  iterative fixed point algorithms.

In summary, we have written a suite of programs that automatically
computes the bifurcation diagram of the PDE (\ref{pde}, \ref{nonlinearity}).
The program finds solutions with each of the 23 symmetry types by following
solution branches which are connected to
the trivial branch by a sequence of symmetry-breaking bifurcations.
A thorough understanding of the possible symmetry-breaking bifurcations
is required for this task.  We introduced the bifurcation digraph, which summarizes the results
of the necessary symmetry calculations.  For the group $\DZ$, these calculations were
done by hand and verified by the GAP computer program \cite{GAP, matthews}.
In the future, we plan to implement automated branch following in
systems where the symmetry group is so complicated that GAP is necessary.

\end{section}


\end{document}